\newtheorem{theorem}{Theorem}[section]
\newtheorem{lemma}{Lemma}[section]
\newtheorem{proposition}{Proposition}[section]
\newtheorem{corollary}{Corollary}[section]
\newtheorem{assumption}{Assumption}[section]
\newtheorem{remark}{Remark}[section]
\title{Finite Horizon Time Inhomogeneous Singular Control Problem of One-dimensional Diffusion via  Dynkin Game}
\author{Yipeng Yang\thanks{Department of Mathematics, University of Houston - Clear Lake (yangy@uhcl.edu)}}
\begin{document}
\maketitle
\begin{abstract} The Hamilton-Jacobi-Bellman equation (HJB)
associated with the time inhomogeneous singular control problem is
 a parabolic partial differential equation, and the existence
of a classical solution is usually difficult to prove. In this
paper, a class of finite horizon stochastic singular control
problems of one dimensional diffusion is solved via a time
inhomogeneous zero-sum game (Dynkin game). The regularity of the
value function of the Dynkin game is investigated, and its
integrated form coincides with the value function of the singular
control problem. We provide conditions under which a classical
solution to the associated HJB equation exists, thus the usual
viscosity solution approach is avoided. We also show that the
optimal control policy is to reflect the diffusion between two time
inhomogeneous boundaries. For a more general terminal payoff
function, we showed that the optimal control involves a possible
impulse at maturity.
\end{abstract}
{\bf Key words.} stochastic singular control, time inhomogeneous
zero-sum game, HJB equation

\

\noindent{\bf AMS subject classifications.} 93E20, 60G40,  91A23

\section{Introduction}
The stochastic singular control problem is one of the classical
research topics in control theory that keeps receiving a lot of
interest in years. A typical such problem has an objective function
as a functional of an underlying stochastic process over a finite or
infinite horizon that needs to be minimized or maximized. This
objective function often involves a functional of the control
actions, which gives its name singular control or impulsive control
depending on the form of the objective function. Its well known
deterministic counterpart is the minimum fuel control problem. The
application of the stochastic singular control is certainly broad,
such as financial engineering, resource management, or mechanical
system control, see, e.g., \cite{Pham09}.

The goal of the stochastic singular control is to characterize the
optimal control policy, if exists, and find the optimal value of the
objective function. In a typical setting, the value function is
known to satisfy a partial differential equation, referred to as the
Hamilton-Jacobi-Bellman (HJB) equation. However, the existence and
regularity of its solution always remain a big challenge. Even in
the time homogeneous one dimensional case, efforts are needed and
special forms are treated in order to characterize the regularity of
the optimal value function, see, e.g., \cite{ma92}\cite{Pham07}. The
 form of optimal policies in  multi-dimensional or time
inhomogeneous singular control problems could be much more
complicated. Soner and Shreve \cite{Soner89} studied a two
dimensional singular stochastic control problem and showed the
existence of a smooth solution to the associated HJB equation, where
the underlying process is a two dimensional Brownian motion with no
drift. How to extend this method to a general multi-dimensional
diffusion is still not clear.

Besides the viscosity solution technique on the study of HJB
equations and stochastic control problems,
e.g.,\cite{Crandall92}\cite{Fleming06}, which often yields a less
regular solution, a lot of literature tackle the singular stochastic
control problem through the optimal stopping and game theory. The
standard optimal stopping problem is often referred to as the one
obstacle problem, and double obstacles problem is referred to as a
game. The connection between singular control  and optimal stopping
is a well known fact. To name a few, Karatzas and Shreve
\cite{Karatzas85} studied the connection between optimal stopping
and singular stochastic control of one dimensional Brownian motion,
and showed that the region of inaction in the control problem is the
optimal continuation region for the stopping problem. Baldursson and
 Karatzas \cite{Baldursson97} established and
exploited the duality between the myopic investor's problem (optimal
stopping) and the social planning problem (stochastic singular
control), where an integral form and change of variable formula were
also presented on this connection. Ma \cite{ma92} dealt with a one
dimensional stochastic singular control problem where the drift term
is assumed to be linear and the diffusion term is assumed to be
smooth, and he showed that the value function is convex and $C^2$
and the controlled process is a reflected diffusion over an
interval. Guo and Tomecek \cite{Guo09} solved a one dimensional
singular control problem via a switching problem, and showed, using
the smooth fit property \cite{Pham07}, that under some conditions
the value function is continuously differentiable ($C^1$). This
connection in a finite horizon case can also be found in
\cite{Boetius98}, where the regularity of the value function is not
fully investigated.

To be brief, there exists a double obstacle problem (game)
associated with a singular control problem, and the derivative (with
respect to state variable) of the value function of the singular
control problem is just the value of this game. The optimal
continuation region of this game coincides with the continuation
region of the singular control problem, and the boundary of the
continuation region turns out to be the reflecting boundary in the
optimal singular control so that the controlled process is a
reflected diffusion within this boundary. Therefore, it is possible
to pass on the regularity of the value function of the game to the
regularity of the value function of the singular control, with extra
smoothness add-on. For example, Fukushima and Taksar \cite{Fuku02}
used this idea to show the existence of a classical solution to the
HJB equation associated with a general one dimensional singular
control problem. See also \cite{Yang14} for an extension and a
correction. Therefore, the regularity of the value function of
optimal stopping and game becomes critical for the study of the
singular stochastic control.

There are several major approaches to the study of optimal stopping
and game, see \cite{Mertens73}\cite{Karoui82} for a general
mathematical framework, and \cite{Zabczyk84} for an early survey. On
the study of regularity of the value function, method of convex
analysis can be found in \cite{Bismut81}, time-discretization method
can be found in \cite{Mackevicius73}, and the penalty method was
introduced by Stettner and Zabczyk \cite{Stettner81}. Another major
approach is via the variational inequalities pioneered by Bensoussan
and Lions \cite{Bensoussan78}. Since the variational inequalities
involve Dirichlet form, it sparks the research interest in the study
of Dirichlet form and its connection to Markov processes, see, e.g.,
\cite{maz92}\cite{Fuku11}. The application of Dirichlet form to
optimal stopping was studied in \cite{Nagai78}. Zabczyk \cite{Zab84}
extended this result to a zero-sum game. Fukushima and Menda
\cite{Fuku06} later investigated the refined solutions under
absolute continuity condition on the transition function of the
underlying process.

However, most of these work dealt with the time homogeneous case. In
this case, the associated HJB equation is an elliptic PDE, and
theories guarantee the existence of a smooth solution under some
conditions, see, e.g., \cite{Fuku02}. For the time inhomogeneous
optimal stopping and game, however, the associated HJB equations are
parabolic PDEs, and the existence and regularity of the solutions
are harder to analyze. For example,  Oshima \cite{Oshima06} applied
the time inhomogeneous Dirichlet form to this problem, and showed
the fine and cofine continuity of the value function, with possible
existence of exceptional sets in the result. Refined solutions to
time inhomogeneous optimal stopping and game via Dirichlet form was
studied in \cite{Ya04}. Palczewski and Stettner
\cite{Palczewski10}\cite{Palczewski11} used the penalty method to
characterize the continuity of the value functions of a time
inhomogeneous optimal stopping problem under weak Feller condition
on the underlying process. Stettner \cite{Stettner11} then extended
this method to finite horizon double obstacle  problem (game). But
only continuity of the value functions was able to prove.

Dai and Yi \cite{Dai09} studied a one dimensional finite horizon
optimal investment problem with transaction costs, where the
terminal CRRA utility function is maximized. This problem is indeed
a singular stochastic control problem, and by constructing its
connection with a parabolic two obstacle problem, the authors proved
the $C^{1,2}$ ($C^{2,1}$ in their paper due to different order of
$x,t$) regularity of the value function. The risky asset in their
model follows a standard geometric Brownian motion, and the
objective function only involves a function of the terminal wealth.
Furthermore, that work lacks the verification theorem to show
optimality.

In this paper we consider a more general model which certainly
includes the one in \cite{Dai09}. We provide conditions under which
a classical solution to the associated HJB equation exists, thus the
usual viscosity solution approach is avoided. The organizations of
this paper is as follows. In the next section, we introduce the time
inhomogeneous singular stochastic control problem that we are
concerned with. Then its associated zero-sum game problem is
investigated in Section \ref{tiDynkin}, where conditions and
regularity of the value function are proved. This result is passed
on to the value function of the singular control in Section
\ref{tiSingular} where a verification theorem and the optimal
control policy are shown. In Section \ref{Opt_Jump} we study the
finite horizon singular control in the situation that the terminal
payoff function is more general, and show that the optimal policy
involves a possible jump at maturity. The discussion on the
regularity of the free boundaries of the continuation region in the
optimal control is provided in Section \ref{reg_B}, followed by
concluding remarks.

\section{Problem Formulation}\label{pform}

Given a probability space $(\Omega, \mathcal{F}_t, X,P_x)$, where
$P_x$ is a family of measures under which $X(t)=X_t$ is a one
dimensional Ito diffusion
\begin{displaymath}
dX_t=\mu(X_t)dt+\sigma(X_t)dB_t, \ \ X_s=x,
\end{displaymath} in which $B_t$ is a standard Brownian motion, we consider the model $\mu(x)=cx+d$ for some  constants $c,d$, and here $\sigma$ is a differentiable bounded function such that $\sigma(x)\geq\epsilon>0,\ \forall x$, for some constant $\epsilon$. Let $(A_t^{(1)},A_t^{(2)})=\mathcal{S}$ be two right continuous and nondecreasing $\mathcal{F}_t$ adapted
processes. We call $(A_t^{(1)},A_t^{(2)})$ the admissible controls
on $X_t$ and the controlled process then follows
\begin{equation}\label{omodel}
dX_t=\mu(X_t)dt+\sigma(X_t)dB_t+dA_t^{(1)}-dA_t^{(2)}, \ \ X_s=x.
\end{equation} Here we assume that $A_t^{(1)}-A_t^{(2)}$ is the minimal
decomposition of a bounded variation process into a difference of
two nondecreasing processes, and $\mathbb{S}$ is the set of all
admissible controls.

Consider the process $Z_t=(t,X_t)$ with time inhomogeneous
 cost function
\begin{eqnarray}\label{cost_s}
k_{\mathcal{S}}(z)=k_{\mathcal{S}}(s,x)&=&E_{(s,x)}\left\{\int_s^T H(t,X_t)dt\right.\nonumber\\
&&+\left.\int_s^T e^{-ct}f_1(t,X_t)dA_t^{(1)}+\int_s^T
e^{-ct}f_2(t,X_t)dA_t^{(2)}+G(X_T)\right\},
\end{eqnarray} where $H(\cdot)$ is called the holding cost,
$f_1(\cdot),f_2(\cdot)$ are the control costs and $G(\cdot)$ is the
terminal cost. We discount $f_1,f_2$ to time $0$ for convenience. If
one likes, the whole parts $e^{-ct}f_1(t,x),e^{-ct}f_2(t,x)$ can be
taken as the time inhomogeneous control costs.

One looks for a control policy $\mathcal{S}$ that minimizes
$k_{\mathcal{S}}(z)$, i.e.,
\begin{equation}\label{mincostw}
W(z)=\min_{\mathcal{S}\in\mathbb{S}} k_{\mathcal{S}}(z).
\end{equation}
This problem is called the time inhomogeneous singular control
problem. We solve this problem through a related time inhomogeneous
zero-sum game (Dynkin game).

\section{Time Inhomogeneous Dynkin Game}\label{tiDynkin}

Now consider the objective function of a time inhomogeneous zero-sum
game
\begin{eqnarray}\label{jcost}
J_z(\tau,\sigma)=J_{(s,x)}(\tau,\sigma)&=&E_{(s,x)}\left\{\int_s^{\tau\wedge\sigma\wedge T}e^{ct}h(t,Y_{t})dt\right.\nonumber\\
&&+1_{(\sigma <\tau\wedge T)}(-f_1(\sigma,Y_{\sigma}))+1_{(\tau <\sigma\wedge T)}f_2(\tau,Y_{\tau})\nonumber\\
&&\left.+1_{(T \leqslant\tau\wedge \sigma)}g(Y_{T})\right\},\ \
\tau\wedge\sigma\geqslant s,
\end{eqnarray} where  $Y_t$ follows
\begin{equation}\label{yproc}
dY_t=(\sigma(Y_t)\sigma'(Y_t)+\mu(Y_t))dt+\sigma(Y_t)dB_t, \ \
Y_s=x.
\end{equation} Two players, $P_1,P_2$, observe the process $Z_t=(t,Y_t)$ and
each of them can stop the process at any time $\tau,\sigma$
respectively before $T$. If $P_1$ stops the process at $\tau$, he
pays $P_2$ the amount
\begin{displaymath}\int_s^{\tau}e^{ct}h(t,Y_{t})dt+f_2(\tau,Y_{\tau}).\end{displaymath}For convenience, we call the part $\int_s^{\tau}e^{ct}h(t,Y_{t})dt$ the
dividend.  If $P_2$ stops the process at $\sigma$, he receives from
$P_1$ the amount
\begin{displaymath}\int_s^{\sigma}e^{ct}h(t,Y_{t})dt+(-f_1(\sigma,Y_{\sigma})).\end{displaymath}
If no one stops the game before $T$, $P_1$ pays $P_2$ the amount
\begin{displaymath}\int_s^{T}e^{ct}h(t,Y_{t})dt+g(Y_{T}).\end{displaymath}
Therefore $P_1$ tries to minimize his payment and $P_2$ tries to
maximize his income. The value of this game is thus given by
\begin{equation}
V(z)=V(s,x)=\inf_\tau\sup_\sigma J_z(\tau,\sigma)=\sup_\sigma
\inf_\tau J_z(\tau,\sigma),\ \ Y_s=x.
\end{equation}

\begin{assumption}\label{basic1}
The functions $f_1,f_2,g,h$ are all bounded and  continuous,
$M>f_2(t,y)> 0> -f_1(t,y)>-M$, $f_2(T,y)\geqslant g(y)\geqslant
-f_1(T,y), \forall y$, for some constant $M$.
\end{assumption}
The following is a version of Theorem 1 in \cite{Stettner11}.

\begin{theorem}\label{Vcontinuous} Under Assumption \ref{basic1}, the function $V(z)$
is a continuous function and is the value of the zero sum game. The
saddle point of this game has the form
\begin{eqnarray}
\hat{\tau}&=&\inf\{t\geqslant s:V(t,Y_t)=f_2(t,Y_t)\}\wedge T\nonumber\\
\hat{\sigma}&=&\inf\{t\geqslant s:V(t,Y_t)=-f_1(t,Y_t)\}\wedge T,\ \
Y_s=x.
\end{eqnarray}
\end{theorem}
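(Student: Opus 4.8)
The statement is essentially the specialization to the present diffusion of the penalty-method result of Stettner \cite{Stettner11}, so the plan is to verify that our data fit that framework and then record how the continuity and the saddle-point structure follow. The starting observation is that Assumption \ref{basic1} places us squarely in the genuine double-obstacle (game) regime: the ordering $-f_1(t,y)<0<f_2(t,y)$ guarantees that the lower payoff $-f_1$ (paid when $P_2$ stops) lies strictly below the upper payoff $f_2$ (collected when $P_1$ stops), so the two obstacles never cross, while the terminal sandwiching $f_2(T,y)\ge g(y)\ge -f_1(T,y)$ makes the maturity payoff consistent with both obstacles and renders the truncation at $T$ compatible with the hitting-time description of $\hat\tau,\hat\sigma$.

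First I would record that the driving process $Y_t$ of \eqref{yproc} is Feller. Its generator is $\mathcal{A}=\tfrac12\sigma^2(y)\partial_{yy}+\big(\sigma(y)\sigma'(y)+\mu(y)\big)\partial_y$; since $\sigma$ is bounded, differentiable and uniformly elliptic ($\sigma\ge\epsilon>0$) and $\mu(y)=cy+d$ is Lipschitz, \eqref{yproc} has a unique strong solution whose transition semigroup satisfies the weak Feller condition, which is precisely the hypothesis under which \cite{Stettner11} operates. The bounded continuous data $f_1,f_2,g,h$ then supply the remaining regularity and boundedness requirements of that theorem.

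The core of the argument is the penalty approximation. I would introduce, for each $n$, a penalized parabolic problem obtained from the continuation equation $\partial_t u^n+\mathcal{A}u^n+e^{ct}h=0$, $u^n(T,\cdot)=g$, by inserting one-sided penalty terms built from $n(u^n-f_2)^+$ and $n(-f_1-u^n)^+$ that force $u^n$ to remain within the band $[-f_1,f_2]$; probabilistically this corresponds to a penalized game whose value $V^n$ is continuous and, by Assumption \ref{basic1}, uniformly bounded by $M$. Monotonicity of the penalized values in $n$ together with these uniform bounds yields a limit $V=\lim_n V^n$; the uniform boundedness of the data and the standard equicontinuity estimates for the penalized solutions in the time-inhomogeneous parabolic setting then give that the convergence is locally uniform and that the limit $V$ is continuous. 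The same approximation produces matching upper and lower estimates, i.e. $\inf_\tau\sup_\sigma J_z=\sup_\sigma\inf_\tau J_z=V$, so the game has a value.

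Finally I would verify that $(\hat\tau,\hat\sigma)$ is a saddle point. Setting the continuation region $C=\{(t,y): -f_1(t,y)<V(t,y)<f_2(t,y)\}$, the limit $V$ satisfies the associated parabolic variational inequality, so the accumulated-dividend-plus-value process $\int_s^{t}e^{cr}h(r,Y_r)dr+V(t,Y_t)$ is a martingale while $Z_t$ remains in $C$ and a sub-/super-martingale once it exits through the respective obstacle. Applying optional sampling at $\hat\tau\wedge\sigma$ and at $\tau\wedge\hat\sigma$, and using the terminal ordering at $T$, yields $J_z(\hat\tau,\sigma)\le V(z)\le J_z(\tau,\hat\sigma)$ for all $\tau,\sigma$, which is exactly the saddle-point inequality with common value $V(z)=J_z(\hat\tau,\hat\sigma)$. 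The main obstacle I anticipate is the passage to the limit in the time-inhomogeneous case: extracting enough compactness and equicontinuity from the penalized parabolic problems to conclude continuity of $V$ rather than mere measurability, and then justifying the optional-sampling step when only continuity, not $C^{1,2}$ regularity, of $V$ is available at this stage; this is where the weak Feller hypothesis and the careful penalty estimates of \cite{Stettner11} carry the argument.
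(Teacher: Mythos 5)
The paper offers no proof of this theorem at all --- it simply declares it to be ``a version of Theorem 1 in \cite{Stettner11}'' --- and your proposal is essentially a reconstruction of that penalty-method argument together with a verification that the diffusion $Y_t$ and the data $f_1,f_2,g,h$ satisfy Stettner's hypotheses (weak Feller semigroup, bounded continuous obstacles with $-f_1<0<f_2$, and the terminal sandwich $-f_1(T,\cdot)\leqslant g\leqslant f_2(T,\cdot)$), so it follows the same route the paper intends. The only inessential deviation is that you phrase the penalization as a one-sided PDE penalty whereas Stettner's is a probabilistic payoff penalization; your concluding optional-sampling derivation of the saddle-point inequalities is the standard companion argument and is consistent with the stopping rules $\hat\tau,\hat\sigma$ stated in the theorem.
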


Very often in this paper, $(s,x)$ denotes a starting point of the
process, and $(t,y)$ denotes an arbitrary point in the interested
region. Define $E=\{(t,y): 0\leqslant t\leqslant T,
-f_1(t,y)<V(t,y)<f_2(t,y)\}$, then this is the continuation region
of this zero sum game. Also define the sets $E_1=\{(t,y):0\leqslant
t\leqslant T, -f_1(t,y)=V(t,y)\}$ and $E_2=\{(t,y):0\leqslant
t\leqslant T, f_2(t,y)=V(t,y)\}$, and we notice that
$|V(t,y)|\leqslant M, \forall (t,y)\in [0,T]\times{\mathbb R}$.

Consider the infinitesimal generator of $Y_t$
\begin{equation}\label{infgen}
\mathcal{L}:=\frac{1}{2}\sigma^2\frac{\partial^2}{\partial
x^2}+(\sigma\sigma'+\mu)\frac{\partial}{\partial
x}+\frac{\partial}{\partial t},\ \ \ \ \left(':=\frac{d}{dx}\right),
\end{equation} then it is uniformly parabolic in our settings. We also
denote $L:=\frac{1}{2}\sigma^2\frac{\partial^2}{\partial
x^2}+(\sigma\sigma'+\mu)\frac{\partial}{\partial x}$. If the
variable $y$ is used in places of $x$, we use
$\frac{\partial}{\partial y}$ in (\ref{infgen}), and similarly if
$s$ is used in places of $t$, $\frac{\partial }{\partial s}$ is then
used.

\begin{assumption}\label{assumab}  $h(t,y)\in C([0,T)\times{\mathbb R})$, and $f_1,f_2$ are
$C^{1,2}([0,T]\times{\mathbb R})$ functions. $h(t,y)$  is strictly
increasing in $y$,  $f_1(t,y)$ is nondecreasing in $y$, $f_2(t,y)$
is nonincreasing in $y$, $\forall t\in[0,T]$. Furthermore, there
exist continuous  curves $a(t)$ and $b(t)$ with $a(t)<0<b(t)$,
$\forall t\in[0,T]$, such that
 \begin{eqnarray}
\mathcal{L}(-f_1(t,a(t)))+e^{ct}h(t,a(t))&=&0,\nonumber\\
\mathcal{L}f_2(t,b(t))+e^{ct}h(t,b(t))&=&0,\ \ \forall
t\in(0,T),\nonumber
\end{eqnarray} and $\mathcal{L}(-f_1(t,y))+e^{ct}h(t,y)$ and $\mathcal{L}f_2(t,y)+e^{ct}h(t,y)$ are both strictly
increasing in $y$.
\end{assumption} Without loss of generality, we set $h(t,0)=0, \forall t\in[0,T]$,
$g(0)=0$.

\begin{figure}[h!]
\centerline{\includegraphics[height=2.5in]{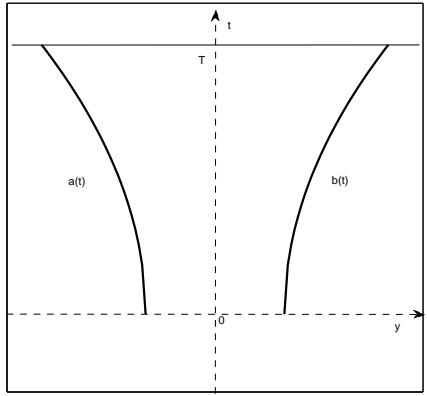}}\caption{Curves
$a(t),b(t)$} \label{region1}
\end{figure}

It is obvious that $\forall t\in(0,T)$,
\begin{eqnarray}
\mathcal{L}(-f_1(t,y))+e^{ct}h(t,y)&<&0,\ \ y<a(t),\nonumber\\
\mathcal{L}f_2(t,y)+e^{ct}h(t,y)&>&0,\ \ y>b(t),\nonumber\\
\mathcal{L}(-f_1(t,y))+e^{ct}h(t,y)&>&0,\ \ y>a(t),\nonumber\\
\mathcal{L}f_2(t,y)+e^{ct}h(t,y)&<&0,\ \ y<b(t).\nonumber
\end{eqnarray} Define the sets $Dab:=\{(t,y):0\leqslant t<T,a(t)<y<b(t)\}$,
$Da:=\{(t,y):0\leqslant t\leqslant T, y< a(t)\}$ and
$Db:=\{(t,y):0\leqslant t\leqslant T, y> b(t)\}$, then we have
\begin{proposition}\label{inDab}
For any point $(t,y)\in Dab$, it is not optimal for either player to
stop the game immediately.
\end{proposition}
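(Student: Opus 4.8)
The plan is to prove the equivalent statement that $Dab\subseteq E$, i.e. that at every $(t,y)\in Dab$ one has the strict inequalities $-f_1(t,y)<V(t,y)<f_2(t,y)$. By the saddle-point description in Theorem \ref{Vcontinuous}, $\hat\tau$ fires immediately exactly when $V=f_2$ and $\hat\sigma$ fires immediately exactly when $V=-f_1$, so the two strict inequalities are precisely the assertion that stopping at once is not optimal for $P_1$, resp.\ $P_2$. The two inequalities are symmetric, so I would prove $V(t,y)<f_2(t,y)$ in detail and obtain $V(t,y)>-f_1(t,y)$ by interchanging the roles of the players, of $f_2$ and $-f_1$, and of $\inf/\sup$.

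Since $a,b$ are continuous and $a(t)<y<b(t)$ with $t<T$, the point $(t,y)$ has a (forward in time) space-time neighborhood $N$ with $\bar N\subset Dab$ and $\bar N\subset[0,T)\times{\mathbb R}$. Let $\theta$ be the first exit time of $Z_r=(r,Y_r)$ from $N$, capped at $t+\delta$ for a small $\delta>0$, so that $t<\theta\le t+\delta<T$ a.s. I would test the infimum defining $V$ with the single $P_1$-strategy $\tau=\theta$, giving $V(t,y)\le\sup_\sigma J_{(t,y)}(\theta,\sigma)$, and show the right-hand side is strictly below $f_2(t,y)$. The key device is the process $M_r:=f_2(r,Y_r)+\int_t^r e^{cu}h(u,Y_u)\,du$. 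Because $f_2\in C^{1,2}$ (Assumption \ref{assumab}) and $Y$ is a non-degenerate diffusion, It\^o's formula gives $dM_r=(\mathcal L f_2+e^{cr}h)(r,Y_r)\,dr+\sigma(Y_r)\partial_yf_2\,dB_r$, and on $\bar N\subset Dab$ the sign facts preceding the proposition yield $\mathcal L f_2+e^{cr}h\le-\eta<0$, so $M$ is a strict supermartingale up to $\theta$. Writing $\rho=\sigma\wedge\theta$ and splitting the payoff on $\{\sigma\ge\theta\}$ (value $M_\theta$) and $\{\sigma<\theta\}$ (value $M_\sigma-(f_1+f_2)(\sigma,Y_\sigma)$, using $f_1,f_2>0$), optional stopping and the drift bound give
\[
f_2(t,y)-J_{(t,y)}(\theta,\sigma)\ \ge\ \eta\,E_{(t,y)}[\rho-t]+\beta\,P_{(t,y)}(\sigma<\theta),
\]
where $\beta>0$ is a lower bound for $f_1+f_2$ on $\bar N$.

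The main obstacle is upgrading this from a pointwise $\le$ to a strict inequality that is \emph{uniform} in $\sigma$, since an adversarial $P_2$ may stop at once ($\sigma=t$) or never. I would show the right-hand side above has a positive infimum over $\sigma$ by contradiction: if it were $0$, a maximizing sequence $\sigma_n$ would force both $E_{(t,y)}[\rho_n-t]\to0$ and $P_{(t,y)}(\sigma_n<\theta)\to0$; but $P_{(t,y)}(\sigma_n<\theta)\to0$ together with $\theta\le t+\delta$ forces $E_{(t,y)}[\rho_n-t]\to E_{(t,y)}[\theta-t]>0$, where $\theta>t$ a.s.\ uses non-degeneracy $\sigma\ge\epsilon>0$. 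This contradiction yields $c_0:=f_2(t,y)-\sup_\sigma J_{(t,y)}(\theta,\sigma)>0$, hence $V(t,y)<f_2(t,y)$.

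The symmetric half uses $P_2$'s strategy $\sigma=\theta$ together with $\tilde M_r:=-f_1(r,Y_r)+\int_t^r e^{cu}h\,du$, which on $\bar N\subset Dab$ satisfies $\mathcal L(-f_1)+e^{cr}h\ge\eta'>0$ and is therefore a strict submartingale; the same splitting (now the positive penalty $(f_1+f_2)(\tau,Y_\tau)$ on $\{\tau<\theta\}$ works in $P_2$'s favor) and the same non-vanishing argument give $\inf_\tau J_{(t,y)}(\tau,\theta)>-f_1(t,y)$, whence $V(t,y)>-f_1(t,y)$. Combining the two strict inequalities places $(t,y)$ in $E$, which is exactly the claim that neither player stops immediately.
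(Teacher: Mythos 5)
Your proof is correct and follows essentially the same route as the paper's: both compare immediate stopping with stopping at the exit time of a small neighborhood contained in $Dab$ and apply Dynkin's/It\^o's formula to the obstacle plus the accumulated dividend, exploiting the signs of $\mathcal{L}(-f_1)+e^{ct}h$ and $\mathcal{L}f_2+e^{ct}h$ on $Dab$ guaranteed by Assumption \ref{assumab}. Your write-up is in fact more careful than the paper's, which evaluates the deviation only against a passive opponent and leaves implicit the uniformity of the strict improvement over the adversary's stopping times (your splitting on $\{\sigma<\theta\}$ and the maximizing-sequence step).
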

\begin{proof}
Consider the optimal stopping strategy for player $P_2$, who wants
to maximize the income. If he stops the game immediately, the payoff
would be $-f_1(t,y)$ plus the expected dividend up to $t$. Construct
a small ball ${\bf B}_r(t,y)$ centered at $(t,y)$ such that ${\bf
B}_r(t,y)\subset Dab$, and consider the strategy to stop the game at
the first exit time $\tau_{{\bf B}_r}$ of the ball ${\bf B}_r(t,y)$.
By Dynkin's formula, the newly future expected payoff would be
\begin{eqnarray}\label{eqnbase}
&&E_{(t,y)}\left(\int_t^{\tau_{{\bf
B}_r}}e^{cu}h(u,Y_u)du-f_1(\tau_{{\bf
B}_r},Y_{\tau_{{\bf B}_r}})\right)\nonumber\\
&&=E_{(t,y)}\left(\int_t^{\tau_{{\bf
B}_r}}e^{cu}h(u,Y_u)+\mathcal{L}(-f_1)(u,Y_u)du\right)-f_1(t,y)>-f_1(t,y),
\end{eqnarray} since $e^{cu}h(u,Y_u)+\mathcal{L}(-f_1)(u,Y_u)>0$ in this region, and this is a contradiction. Therefore, it is not
optimal for $P_2$ to stop this game in $Dab$. Similarly we can show
that it is not optimal for $P_1$ either to stop the game in $Dab$.
\end{proof}
By a similar argument, it is easy to see that it is not optimal for
$P_2$ to stop the game in the region $Db$, and it is not optimal for
$P_1$ to stop the game in the region $Da$. Thus $Dab\subset E$,
$E_1\subset Da, E_2\subset Db$.

In what follows, $H^m$ denotes the $m$-th order Sobolev space, and
$H_0^m$ denotes the $m$-th order Sobolev space with compact support.

\begin{assumption}\label{AB}
There exist two points $A,B$  such that $A\leqslant a(T), B\geqslant
b(T)$, $g(y)$ is in $H^1((A,B))$. Furthermore,
$-f_1(T,y)=g(y),\forall y\leqslant A$ and $f_2(T,y)=g(y),\forall
y\geqslant B$.
\end{assumption} It is obvious that $A<0<B$ in our settings. Let $\tau_{AB}$ be the first exit time if the process exits the interested region through the line segment
$\{T\}\times[A,B]$. Obviously $\tau_{AB}=T$, but we use the notation
$\tau_{AB}<T$ to denote the event that the process exits the
interested region through the line segment $\{T\}\times[A,B]$, and
$T<\tau_{AB}$ denotes the event that the process exits the region
but not through the line segment $\{T\}\times[A,B]$, see Figure
\ref{region2}.

\begin{figure}[h!]
\centerline{\includegraphics[height=2in,
width=2.5in]{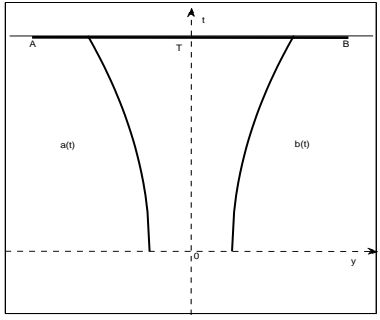}}\caption{Curves $a(t),b(t)$ and the
line segment $AB$} \label{region2}
\end{figure}

\begin{assumption}\label{tildeAB}
There exist constant curves $\tilde{A},\tilde{B}$ with $\tilde{A}<
A,\tilde{B}> B$, such that for any $y<\tilde{A}$,
\begin{displaymath}
E_{(t,y)}\left(\int_t^{\tau_a\wedge
T\wedge\tau_{AB}}(e^{cu}h(u,Y_u)+\mathcal{L}(-f_1(u,
Y_u)))du+1_{(\tau_a\wedge\tau_{AB}<T)}(2M)\right)\leqslant 0,
\end{displaymath} and for any $y>\tilde{B}$,
\begin{displaymath}
E_{(t,y)}\left(\int_t^{\tau_b\wedge
T\wedge\tau_{AB}}(e^{cu}h(u,Y_u)+\mathcal{L}f_2(u,Y_u))du-1_{(\tau_b\wedge\tau_{AB}<T)}(2M)\right)\geqslant
0,
\end{displaymath} where $\tau_a,\tau_b$ are the first hitting times
to the curves $a,b$, respectively.
\end{assumption}

Notice that $1_{(\tau_a\wedge\tau_{AB}<T)}$ denotes the event that
the process either hits the curve $a$ first or exits the interested
region through the line segment $\{T\}\times[A,B]$.

\begin{proposition}
It is optimal for $P_2$ to stop the game in the region
$[0,T]\times(-\infty,\tilde{A}]$ and it is optimal for $P_1$ to stop
the game in the region $[0,T]\times[\tilde{B},\infty)$.
\end{proposition}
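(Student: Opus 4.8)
The plan is to prove the two assertions by establishing the pointwise identities $V(t,y)=-f_1(t,y)$ on $\{y\le\tilde A\}$ and $V(t,y)=f_2(t,y)$ on $\{y\ge\tilde B\}$; by the saddle-point description in Theorem \ref{Vcontinuous} these say exactly that $\hat\sigma=t$ (respectively $\hat\tau=t$), i.e. that immediate stopping is optimal for the player in question. I will treat the region $\{y\le\tilde A\}$ in detail, since the region $\{y\ge\tilde B\}$ is entirely symmetric upon exchanging the roles of $P_1$ and $P_2$, of $f_2$ and $-f_1$, of the curves $b$ and $a$, and of the two displayed drift inequalities following Assumption \ref{assumab}. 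First I would record the trivial inequality $V(t,y)\ge -f_1(t,y)$, valid everywhere because $P_2$ can stop immediately.

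For the reverse inequality I would apply the dynamic programming principle at the stopping time $\rho:=\tau_a\wedge T$. For $y\le\tilde A$ the starting point lies in $Da$, so by definition of $\rho$ the trajectory stays in $Da$ on $[t,\rho)$. Letting the minimiser $P_1$ play the admissible strategy $\tau=\rho$ collapses the game payoff (the term $1_{(\tau<\sigma\wedge\rho)}$ vanishes) and yields
\[
V(t,y)\le \sup_\sigma E_{(t,y)}\Big[\int_t^{\sigma\wedge\rho}e^{cu}h(u,Y_u)\,du+1_{(\sigma<\rho)}(-f_1(\sigma,Y_\sigma))+1_{(\rho\le\sigma)}V(\rho,Y_\rho)\Big].
\]
The next step is to replace the continuation value $V(\rho,Y_\rho)$ on $\{\rho\le\sigma\}$ by $-f_1$ plus a controlled error. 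If $\rho=\tau_a<T$ the process is on the curve $a$, and if $\rho=T$ with exit through $\{T\}\times[A,B]$ it is on that segment; in both cases $|V|\le M$ together with $-f_1\ge -M$ gives $V(\rho,Y_\rho)\le -f_1(\rho,Y_\rho)+2M$. If instead $\rho=T$ with exit below $A$, Assumption \ref{AB} gives $V(\rho,Y_\rho)=g(Y_T)=-f_1(T,Y_T)$ with no error. Substituting $V(\rho,Y_\rho)\le -f_1(\rho,Y_\rho)+2M\cdot 1_{(\tau_a\wedge\tau_{AB}<T)}$, applying Dynkin's formula to $-f_1$ along the stopped trajectory, and writing $\theta=\sigma\wedge\rho$, I obtain
\[
V(t,y)\le -f_1(t,y)+\sup_\sigma E_{(t,y)}\Big[\int_t^{\theta}\big(e^{cu}h(u,Y_u)+\mathcal L(-f_1)(u,Y_u)\big)\,du+2M\cdot 1_{(\rho\le\sigma)}1_{(\tau_a\wedge\tau_{AB}<T)}\Big].
\]

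It then remains to show that the supremum on the right is nonpositive; this is the crux, and it is precisely what Assumption \ref{tildeAB} is built to supply. For the extreme strategy $\sigma\ge\rho$ (never stop before $\rho$) the bracket is exactly the expression bounded above by $0$ in Assumption \ref{tildeAB}. For a stop strictly inside $Da$ the bonus term is absent while the integrand $e^{cu}h+\mathcal L(-f_1)$ is strictly negative throughout $Da$ (the first displayed inequality after Assumption \ref{assumab}), so that branch is nonpositive as well. The main obstacle I would treat most carefully is to rule out that a path-dependent mixture of these two behaviours gives a strictly positive value: since the reward $2M$ is earned only on $\{\rho\le\sigma\}$, I would argue that stopping early only forfeits strictly negative accumulated drift without collecting the reward, so that no stopping rule can outperform the two extremes already controlled above. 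Combining this with $V\ge -f_1$ yields $V=-f_1$ on $\{y\le\tilde A\}$, i.e. $\hat\sigma=t$ there, which is optimality of immediate stopping for $P_2$. The symmetric argument, using the second drift inequality after Assumption \ref{assumab} and the second estimate in Assumption \ref{tildeAB}, gives $V=f_2$ on $\{y\ge\tilde B\}$ and hence optimality of immediate stopping for $P_1$, completing the proof.
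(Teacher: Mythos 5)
Your argument is essentially the paper's: both reduce the claim to showing that any stopping rule for $P_2$ started below $\tilde{A}$ is dominated by immediate stopping, splitting into the branch where the game is stopped inside $Da$ (controlled by Dynkin's formula and the strict negativity of $\mathcal{L}(-f_1)+e^{ct}h$ there) and the branch where one waits until the exit of $Da$ (controlled by the bound $|V|\leqslant M$, the identification $g=-f_1(T,\cdot)$ below $A$ from Assumption \ref{AB}, and the integral estimate of Assumption \ref{tildeAB}). The only differences are presentational — you route the comparison through the dynamic programming principle and the identity $V=-f_1$ rather than a direct payoff comparison, and you are, if anything, more explicit than the paper about the path-dependent mixture of the two branches, which the paper's two-case argument passes over silently and which your heuristic resolves at the same level of rigor as the original.
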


\begin{proof} It suffices to prove the first half. Consider the
starting point $(s,x)\in [0,T]\times(-\infty,\tilde{A}]$. If $s=T$,
the result automatically holds. Now suppose $s<T$. Since
$\mathcal{L}(-f_1(t,y))+e^{ct}h(t,y)<0, y<a(t)$, and if $P_2$ stops
the game in the region $(s,T)\times(-\infty,a]$, the payoff would be
\begin{equation}\label{basearg}
E_{(s,x)}\left(\int_s^\tau(\mathcal{L}(-f_1(t,Y_t))+e^{ct}h(t,Y_t))dt\right)-f_1(s,x)<-f_1(s,x),
\end{equation} which is certainly not optimal. Now suppose $P_2$
would stop the game beyond the region $(s,T)\times(-\infty,a]$, the
payoff will be less than (since $M$ is an upper bound of $V$)
\begin{eqnarray}
&&E_{(s,x)}\left(\int_s^{\tau_a\wedge
T\wedge\tau_{AB}}e^{ct}h(t,Y_t)dt+1_{\tau_a<T\wedge\tau_{AB}}M+1_{T\leqslant\tau_a\wedge\tau_{AB}}(-f_1(T,Y_T))+1_{\tau_{AB}< T\wedge\tau_a}g(Y_{\tau_{AB}})\right)\nonumber\\
&&=E_{(s,x)}\left(\int_s^{\tau_a\wedge
T\wedge\tau_{AB}}e^{ct}h(t,Y_t)dt+1_{\tau_a<T\wedge\tau_{AB}}M-f_1(\tau_a\wedge T,Y_{\tau_a\wedge T})+1_{\tau_{AB}< T\wedge\tau_a}g(Y_{\tau_{AB}})\right.\nonumber\\
&&\ \ \ \ \ \left.+1_{\tau_a\wedge\tau_{AB}<T}f_1(\tau_a,Y_{\tau_a})\right)\nonumber\\
&&=E_{(s,x)}\left(\int_s^{\tau_a\wedge
T\wedge\tau_{AB}}(e^{ct}h(t,Y_t)+\mathcal{L}(-f_1(t,Y_t)))dt+1_{\tau_{AB}< T\wedge\tau_a}(g(Y_{\tau_{AB}})+f_1(\tau_{AB},Y_{\tau_{AB}}))\right.\nonumber\\
&&\ \ \ \ \  \left.+1_{\tau_a<T\wedge\tau_{AB}}(M+f_1(\tau_a,Y_{\tau_a}))\right)-f_1(s,x)\nonumber\\
&&\leqslant E_{(s,x)}\left(\int_s^{\tau_a\wedge
T\wedge\tau_{AB}}(e^{ct}h(t,Y_t)+\mathcal{L}(-f_1(t,Y_t)))dt+1_{\tau_a\wedge\tau_{AB}<T}(2M)\right)-f_1(s,x)\nonumber\\
&&\leqslant -f_1(s,x)\nonumber
\end{eqnarray} by Assumption \ref{tildeAB}. Therefore the optimal
strategy for $P_2$ at this starting point is to stop the game
immediately.
\end{proof}

Now it is clear that $[0,T]\times(-\infty,\tilde{A}]\subset E_1$,
$[0,T]\times[\tilde{B},\infty)\subset E_2$. In fact we can say
something more about the termination region of this game. Let
$\tilde{E}_1$ be the largest connected region in $E_1$ which
contains the set $[0,T]\times(-\infty,\tilde{A}]$, and $\tilde{E}_2$
 the largest connected region in $E_2$ which contains
$[0,T]\times[\tilde{B},\infty)$, then we have

\begin{proposition}
$\tilde{E}_1,\tilde{E}_2$ are both simply connected.
\end{proposition}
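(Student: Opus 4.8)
The plan is to show that each termination region $\tilde{E}_1$ and $\tilde{E}_2$ is simply connected.  By definition these are the maximal connected components of $E_1$ and $E_2$ containing the half-strips $[0,T]\times(-\infty,\tilde{A}]$ and $[0,T]\times[\tilde{B},\infty)$ respectively.  I will focus on $\tilde{E}_1$; the argument for $\tilde{E}_2$ is symmetric.  In the plane $[0,T]\times\mathbb{R}$, a connected open (or suitably regular) set is simply connected precisely when its complement has no bounded component, i.e.\ when it contains no ``holes.''  So the strategy is to rule out holes: I want to show that $\tilde{E}_1$ cannot enclose a bounded region of points lying in the continuation set $E$ (or in $E_2$).

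The key structural fact I would exploit is the monotonicity built into Assumption \ref{assumab}.  Recall that $\mathcal{L}(-f_1(t,y))+e^{ct}h(t,y)$ is strictly increasing in $y$ and vanishes on the curve $y=a(t)$, so it is negative for $y<a(t)$ and positive for $y>a(t)$.  The earlier propositions already established $E_1\subset Da$ and $[0,T]\times(-\infty,\tilde{A}]\subset E_1$.  The first step is therefore to establish a one-sided ``downward stability'' of $E_1$: I claim that for each fixed time slice $t$, if a point $(t,y_0)\in\tilde{E}_1$ then every $(t,y)$ with $y\le y_0$ also lies in $E_1$ (and hence in $\tilde{E}_1$ by connectedness through the half-strip).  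This would say each time-slice of $\tilde{E}_1$ is a half-line $(-\infty, \beta(t)]$ for some boundary value $\beta(t)$, which immediately forces simple connectedness, since a set that is a downward-closed half-line in every time-slice and connected in $t$ has connected complement.

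To prove this slice-wise half-line property I would argue by the saddle-point characterization from Theorem \ref{Vcontinuous}: on $E_1$ we have $V(t,y)=-f_1(t,y)$, and it is optimal for $P_2$ to stop immediately.  Using the optimal-stopping argument in the style of inequality (\ref{basearg}), I would show that if stopping is optimal at $(t,y_0)$, then because moving downward (decreasing $y$ below $a(t)$) only makes the integrand $e^{ct}h(t,Y_t)+\mathcal{L}(-f_1(t,Y_t))$ more negative — by the strict monotonicity in $y$ together with $h$ increasing in $y$ — the continuation value can only decrease relative to the immediate stopping payoff, so stopping remains optimal at $(t,y)$ for all $y\le y_0$.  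This uses a comparison/coupling of the diffusion $Y_t$ started from two initial points, relying on the standard fact that solutions of the one-dimensional SDE (\ref{yproc}) preserve ordering of initial conditions.

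The main obstacle I anticipate is making the coupling and the monotonicity transfer rigorous near the free boundary and at the lateral time edges, in particular handling the terminal condition at $t=T$ cleanly.  The comparison principle for $Y_t$ gives pathwise ordering, but I must ensure that when the lower-started process exits the region, the relevant boundary/terminal payoffs ($-f_1$, $g$, and the bound $M$) still combine to keep the net stopping advantage nonpositive; this is where Assumptions \ref{AB} and \ref{tildeAB} are needed to control the behavior on the terminal segment $\{T\}\times[A,B]$ and in the far tails.  Once the half-line structure of each time-slice is secured, simple connectedness follows as a topological triviality, so essentially all the work is in the monotone stopping-region claim.
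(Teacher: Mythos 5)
There is a genuine gap, and your route is also much heavier than the statement requires. Everything in your plan rests on the slice-wise claim that if $(t,y_0)\in E_1$ then $(t,y)\in E_1$ for all $y\le y_0$, i.e.\ that $V(t,y)+f_1(t,y)$ is nondecreasing in $y$. You never actually prove this, and it does not follow from the coupling you invoke. Writing $V+f_1$ via Dynkin's formula up to the saddle-point times, the quantity to compare at two ordered starting points is
\begin{displaymath}
E\Bigl[\int_t^{\hat\tau\wedge\hat\sigma\wedge T}\bigl(e^{cu}h+\mathcal{L}(-f_1)\bigr)(u,Y_u)\,du
+1_{(\hat\tau<\hat\sigma\wedge T)}(f_1+f_2)(\hat\tau,Y_{\hat\tau})
+1_{(T\le\hat\tau\wedge\hat\sigma)}\bigl(g(Y_T)+f_1(T,Y_T)\bigr)\Bigr],
\end{displaymath}
and while the running term is monotone in the starting point by pathwise ordering, the optimal stopping times themselves change with the starting point, the positive term $f_1+f_2$ has no useful monotonicity ($f_1$ nondecreasing and $f_2$ nonincreasing pull in opposite directions), and no monotonicity at all is assumed on $g$ (Assumption \ref{AB} only asks $g\in H^1$ with matching conditions at $A,B$), so the terminal comparison fails outright. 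The paper itself signals that this kind of monotone structure is \emph{not} automatic in the time-inhomogeneous setting: it introduces the extra Assumptions \ref{monoF1} and \ref{monoF2} later precisely to obtain the interval structure of the continuation region (Proposition \ref{Econ}), with a remark that outside the time-homogeneous case ``more conditions are needed.'' Your argument, if it worked from Assumptions \ref{basic1}--\ref{tildeAB} alone, would render those assumptions redundant --- a strong sign the monotonicity cannot be had for free.

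The statement itself needs much less. The paper's proof is a soft no-hole argument: a bounded complementary component enclosed by $\tilde{E}_1$ would be an open set contained in $Da$ (since $\tilde{E}_1\subset E_1\subset Da$ and every point of the hole lies below some point of $\tilde{E}_1$ on the same time slice), on which $\mathcal{L}(-f_1)+e^{ct}h<0$, while on its boundary $V=-f_1$; Dynkin's formula applied as in (\ref{basearg}) then forces $V<-f_1$ inside the hole, contradicting $V\ge -f_1$ everywhere. To repair your proposal, either replace the global slice-monotonicity by this local contradiction argument, or supply a genuine proof of the comparison step --- which would in particular require hypotheses on $g$ that the paper does not impose.
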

This proposition says there are no holes in $\tilde{E}_1$ or
$\tilde{E}_2$.
\begin{proof}
 By the
continuity of $V$ (Theorem \ref{Vcontinuous}) and the properties of
$f_1,f_2$, we know that the upper boundary (in $y$) of $\tilde{E}_1$
and the lower boundary (in $y$) of $\tilde{E}_2$ are continuous
curves. Suppose there is a point $(s,x)$ below the the upper
boundary of $\tilde{E}_1$ with $V(s,x)>-f_1(s,x)$, then again by the
continuity of $V$ and $f_1$, there is an open connected region ${\bf
B}_{(s,x)}$ containing $(s,x)$ such that $V(t,y)>-f_1(t,y),\ \forall
(t,y)\in {\bf B}_{(s,x)}$ and furthermore $\partial {\bf
B}_{(s,x)}\subset\tilde{E}_1$. Since in the region ${\bf B}_{(s,x)}$
the inequality $\mathcal{L}(-f_1(t,y))+e^{ct}h(t,y)<0$ holds while
on the boundary $\partial{\bf B}_{(s,x)}$ we have
$V(t,y)=-f_1(t,y)$, Dynkin's formula and an  argument similar to
(\ref{basearg})  easily imply that $V(s,x)<-f_1(s,x)$, hence a
contradiction. The rest of this proposition can be proved
analogously.
\end{proof}

Denote $\tilde{a}$  the upper boundary (in $y$) of $\tilde{E}_1$,
and $\tilde{b}$ the lower boundary of (in $y$) $\tilde{E}_2$. We
call a directed curve a $t_+$ curve if the points $(t,y)$ on this
curve is in a nondecreasing order in $t$ as the curve being lined up
from the initial point to the terminal point. A $t_-$ curve is
defined similarly. Notice that $\tilde{a}(T)=A$, $\tilde{b}(T)=B$.

\begin{proposition}
If there is a point $(t,y)$ with $y<a(t)$ such that
$V(t,y)>-f_1(t,y)$, then this point is connected to $Dab$ along a
$t_+$ curve in $E$. If there is a point $(t,y)$ with $y>b(t)$ such
that $V(t,y)<f_2(t,y)$, then this point is connected to $Dab$ along
a $t_+$ curve in $E$.
\end{proposition}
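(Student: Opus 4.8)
The plan is to work with the \emph{gap function} $w:=V+f_1$, which is nonnegative everywhere (since $V\geqslant -f_1$), vanishes exactly on $E_1$, and is strictly positive on $E$. Since $V$ solves the game's dynamic programming equation $\mathcal{L}V+e^{ct}h=0$ on the continuation region $E$, while $E_1\subset Da$, a direct computation gives, on $E\cap Da$,
\[
\mathcal{L}w=\mathcal{L}V+\mathcal{L}f_1=-e^{ct}h+\mathcal{L}f_1=-\bigl(e^{ct}h+\mathcal{L}(-f_1)\bigr)>0,
\]
so $w$ is a \emph{strict subsolution} of the uniformly parabolic operator $\mathcal{L}$ throughout $E\cap Da$. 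I will prove the first assertion, the second being symmetric (working with $w':=f_2-V$ on $Db$, where $\mathcal{L}w'=-(e^{ct}h+\mathcal{L}f_2)>0$). Given $(t_0,y_0)\in E$ with $y_0<a(t_0)$, let $U$ be the connected component of $E\cap Da$ containing $(t_0,y_0)$; it is open by continuity of $V$ (Theorem \ref{Vcontinuous}). The goal is to show $U$ reaches the curve $a$ when followed in the direction of increasing $t$, after which one horizontal step enters $Dab$.

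First I would record that $U$ is bounded: it lies in the strip $0\leqslant t\leqslant T$, below the curve $a$, and above the level $\tilde{A}$, because $[0,T]\times(-\infty,\tilde{A}]\subset E_1$ forces $w\equiv 0$ there. Hence $\partial U$ consists only of the free boundary $\{w=0\}\subset E_1$, pieces of the curve $a$, and pieces of the faces $\{t=0\}$ and $\{t=T\}$. The heart of the matter is to prevent $U$ from ``closing up'' against $E_1$ as $t$ increases. Reversing time via $\tilde{t}:=t^{+}-t$, where $t^{+}:=\sup\{t:(t,y)\in\bar U\}$, turns $\mathcal{L}$ into a forward uniformly parabolic operator and makes $w$ a subsolution of it. If the entire \emph{future} boundary of $U$ (its top in $t$ together with the adjacent lateral free boundary) carried the value $w=0$ and $t^{+}<T$, the comparison principle against the zero solution would force $w\leqslant 0$ on all of $U$, contradicting $w(t_0,y_0)>0$. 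Equivalently, a strict subsolution has no interior maximum, and the maximum over $\bar U$ cannot sit on the free boundary (where $w=0$) nor, by the computation above, along the earliest-time face. Consequently the forward frontier of $U$ must be either the curve $a$, where it abuts $Dab$, or the terminal face $\{t=T\}$.

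With the forward frontier identified, I would assemble the $t_{+}$ curve. Starting at $(t_0,y_0)$ and increasing $t$ inside $U$, the impossibility of a purely-$E_1$ forward closure permits steering the path upward in $y$ while never decreasing $t$, until it meets a point $(t^{*},a(t^{*}))$ on the curve $a$; since the points immediately above $a(t^{*})$ lie in $Dab\subset E$ (Proposition \ref{inDab}), appending one short horizontal segment completes a $t_{+}$ curve in $E$ from $(t_0,y_0)$ into $Dab$. The asymmetry between forward and backward time is exactly right here: the comparison argument forbids closure \emph{forward} in time but allows $U$ to originate at $t=0$ or at a lower free boundary, which is harmless for a $t_{+}$ curve.

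The main obstacle is the terminal case $t^{+}=T$. There the future boundary of $U$ meets $\{t=T\}$, where by Assumption \ref{AB} the terminal gap $g+f_1$ can be positive only on a subinterval of $(A,B)$, so the comparison argument no longer yields $w\leqslant 0$. I would resolve this by showing that the terminal continuation interval of $U$ must in fact extend up to $a(T)$: were it capped at some $A''<a(T)$ by a patch of $E_1$ at $t=T$, the same strict-subsolution and comparison estimate applied in a thin slab $\{T-\delta\leqslant t\leqslant T\}$ adjacent to that cap would again be violated, using $\overline{Dab}\supset\{T\}\times[a(T),b(T)]$ and the strict positivity of $w$ just above $a(T)$. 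Making both the upward steering and this terminal merging fully rigorous---in essence a statement about the time-evolution of the free boundary of $E$ inside $Da$---is the delicate point; everything else reduces to the sign $\mathcal{L}w>0$ on $Da$ and the parabolic maximum principle.
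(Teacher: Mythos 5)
Your strategy---turning the gap $w=V+f_1$ into a strict subsolution of $\mathcal{L}$ on $E\cap Da$ and running a parabolic comparison argument on the component $U$---is genuinely different from the paper's, which stays entirely probabilistic: if $(t,y)$ is not joined to $Dab$ by a $t_+$ curve in $E$, then the optimally stopped process started at $(t,y)$ must leave $E$ before crossing the curve $a$; applying Dynkin's formula to the smooth obstacle $-f_1$ up to that exit time, exactly as in (\ref{basearg}), and using $\mathcal{L}(-f_1)+e^{ct}h<0$ below $a$ together with $V=-f_1$ at the exit point yields $V(t,y)\leqslant -f_1(t,y)$, a contradiction. Two features of that argument are precisely where your version has gaps. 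First, the paper never differentiates $V$: the operator $\mathcal{L}$ is only ever applied to the $C^{1,2}$ function $f_1$. At this stage the only available information about $V$ is continuity (Theorem \ref{Vcontinuous}); the statement that $\mathcal{L}V+e^{ct}h=0$ on $E$ in a sense strong enough to support ``a strict subsolution has no interior maximum'' and the comparison against the zero solution is only established in Theorem \ref{Vreg}, which comes later and itself presupposes the geometric structure of $E$ that this proposition (together with Proposition \ref{Econ}) is being used to establish. As written, your argument is therefore circular, or at least needs an independent proof that $V$ is a subsolution in a class to which the parabolic maximum principle applies.

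Second, even granting the PDE, the maximum principle applied on $U\cap\{t\geqslant t_0\}$ only locates a point of the lateral-or-forward parabolic boundary where $w>0$, hence a point of $\bar{U}$ on the curve $a$ (or on $\{t=T\}$) at some $t^*\geqslant t_0$. It does not produce a path from $(t_0,y_0)$ to $(t^*,a(t^*))$ that is nondecreasing in $t$: an open connected set is path-connected but not monotonically so, and ``steering the path upward in $y$ while never decreasing $t$'' is essentially the assertion to be proved rather than a consequence of what precedes it. You flag this, together with the terminal-face case, as the delicate points, and that is an accurate self-assessment---they are the content of the proposition. The probabilistic route disposes of both at once, because the sample path $(u,Y_u)$, $u\geqslant t$, is automatically a $t_+$ curve and the exit-time analysis handles the frontier uniformly. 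I would recommend recasting your computation in that form: it is the same sign condition on $\mathcal{L}(-f_1)+e^{ct}h$, but exploited through Dynkin's formula for $f_1$ rather than through a comparison principle for $V$.
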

\begin{proof}
We just need to prove the first half as usual. If this point $(t,y)$
is not connected to $Dab$ along any $t_+$ curve in $E$, that means
starting from this point, the game should be stopped before the
process $Y_t$ hits the curve $a(t)$. Since in this region
$\mathcal{L}(-f_1(t,y))+e^{ct}h(t,y)<0$ holds and a similar argument
to (\ref{basearg})  easily implies that $V(t,y)\leqslant -f_1(t,y)$,
hence a contradiction.
\end{proof}

Let ${\mathbb U}=[0,T]\times[m,n]$ be any simply connected region
such that $m(s)\leqslant a(s),n(s)\geqslant b(s),s\in[0,T]$, are
functions of $s$ with possible jumps (in this case we can still draw
a continuous curve as the boundary of ${\bf U}$), and further
$m(T)=A,n(T)=B$. Denote $\tau_{\mathbb U}$ the first exit time of
${\mathbb U}$, and define
\begin{equation}\label{def_f}
F_{\mathbb U}(s,x)=E_{(s,x)}\left(\int_s^{\tau_{\mathbb
U}}e^{cu}h(u,Y_u)du+R(\tau_{\mathbb U},Y_{\tau_{\mathbb
U}})\right),\ (s,x)\in{\mathbb U},
\end{equation} where $R(\tau_{\mathbb U},Y_{\tau_{\mathbb U}})=g(Y_T)$ if
$\tau_{\mathbb U}=T$, $R(\tau_{\mathbb U},Y_{\tau_{\mathbb
U}})=-f_1(\tau_{\mathbb U},Y_{\tau_{\mathbb U}})$ if $\tau_{\mathbb
U}<T$ and $Y_{\tau_{\mathbb U}}\leqslant a(\tau_{\mathbb U})$, and
$R(\tau_{\mathbb U},Y_{\tau_{\mathbb U}})=f_2(\tau_{\mathbb
U},Y_{\tau_{\mathbb U}})$ if $\tau_{\mathbb U}<T$ and
$Y_{\tau_{\mathbb U}}\geqslant b(\tau_{\mathbb U})$.

For any point $(s,x)\in{\mathbb U}$ with $x<a(s)$, define an
associated cone $C_{(s,x)}$ as
\begin{displaymath}
C_{(s,x)}=\left\{(t,y)\in{\mathbb U}:s\leqslant t\leqslant T,
x<y\leqslant a(t),\frac{t-s}{y-x}\leqslant\eta\right\},
\end{displaymath} where $\eta>0$ is a constant. Let $\tilde{\mathbb U}={\mathbb U}\cup
C_{(s,x)}$, and define the function $F_{\tilde{\mathbb U}}(s,x)$
similarly as in (\ref{def_f}), we put a joint assumption on the
process $Y_t$ and the functions $f_1,f_2,h,g$ as follows:
\begin{assumption}\label{monoF1} For any $(s,x)\in{\mathbb U}$ with $x<a(s)$,
$F_{\tilde{\mathbb U}}(s,x)\geqslant F_{{\mathbb U}}(s,x)$.
\end{assumption}
Similarly for any point $(s,x)\in{\mathbb U}$ with $x>b(s)$, we may
define an associated cone $C_{(s,x)}$ as
\begin{displaymath}
C_{(s,x)}=\left\{(t,y)\in{\mathbb U}:s\leqslant t\leqslant T,
b(t)\leqslant y<x,\frac{t-s}{x-y}\leqslant\eta\right\}.
\end{displaymath}
\begin{assumption}\label{monoF2} For any $(s,x)\in{\mathbb U}$ with $x>b(s)$,
$F_{\tilde{\mathbb U}}(s,x)\leqslant F_{{\mathbb U}}(s,x)$.
\end{assumption}
\begin{remark}
In the one dimensional time homogeneous case, this assumption holds
true due to the a priori given conditions on $f_1,f_2,h$. But in the
case of time inhomogeneous or multiple dimensional case, the
situation is much more complicated and more conditions are needed.
\end{remark}

\begin{proposition}\label{Econ}
Under Assumptions \ref{monoF1}, \ref{monoF2}, the continuation
region $E$ is simply connected, and its lower boundary (in $y$) is
the curve $\tilde{a}$, its upper boundary (in $y$) is the curve
$\tilde{b}$.
\end{proposition}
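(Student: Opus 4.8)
The plan is to identify the continuation region $E$ with the open band $\{(t,y): \tilde{a}(t)<y<\tilde{b}(t)\}$. Since $\tilde{a}<0<\tilde{b}$ are continuous curves, this band is simply connected with lower boundary $\tilde{a}$ and upper boundary $\tilde{b}$, which is exactly the assertion. One inclusion is essentially free: $\tilde{E}_1$ is simply connected, contains $[0,T]\times(-\infty,\tilde{A}]$, and has upper boundary $\tilde{a}$, so $\tilde{E}_1=\{(t,y):y\leqslant\tilde{a}(t)\}$, and on it $V=-f_1$; symmetrically $\tilde{E}_2=\{(t,y):y\geqslant\tilde{b}(t)\}$ with $V=f_2$. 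Because $E=\{-f_1<V<f_2\}$ is disjoint from both $\tilde{E}_1$ and $\tilde{E}_2$, we get $E\subseteq\{\tilde{a}(t)<y<\tilde{b}(t)\}$ immediately.

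The work is the reverse inclusion: every $(s,x)$ with $\tilde{a}(s)<x<\tilde{b}(s)$ must lie in $E$. If $a(s)\leqslant x\leqslant b(s)$ this follows from $Dab\subseteq E$ (Proposition \ref{inDab}) and the continuity of $V$. By the left/right symmetry it then suffices to treat the left strip $\tilde{a}(s)<x<a(s)$ and prove $V(s,x)>-f_1(s,x)$; the companion inequality $V(s,x)<f_2(s,x)$ is automatic, since this strip lies below the curve $b$ and is therefore disjoint from $Db\supseteq E_2$. Suppose for contradiction that $(s,x)\in E_1$. As $x>\tilde{a}(s)$, the point does not belong to $\tilde{E}_1$, so it sits in a component of the stopping set $E_1$ separated from $\tilde{E}_1$ — an \emph{island} of $E_1$ lying inside $Da$ above $\tilde{a}$, which is precisely the configuration the monotonicity assumptions are designed to exclude.

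To rule out such an island I would invoke Assumption \ref{monoF1}. Choose an admissible region $\mathbb{U}$ whose lower boundary $m\leqslant a$ passes immediately below $(s,x)$, so that the process exits $\mathbb{U}$ from $(s,x)$ essentially at once through the bottom and $F_{\mathbb{U}}(s,x)=-f_1(s,x)$. Form the enlarged region $\tilde{\mathbb{U}}=\mathbb{U}\cup C_{(s,x)}$; the cone $C_{(s,x)}$ opens upward toward the curve $a$, so in $\tilde{\mathbb{U}}$ the process may travel from $(s,x)$ up to $a(t)$, where it meets $\overline{Dab}\subseteq\overline{E}$ and the payoff strictly exceeds $-f_1$. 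Assumption \ref{monoF1} yields $F_{\tilde{\mathbb{U}}}(s,x)\geqslant F_{\mathbb{U}}(s,x)=-f_1(s,x)$, and since the process exits $\tilde{\mathbb{U}}$ through the portion of $\partial\tilde{\mathbb{U}}$ on the curve $a$ with positive probability and there collects strictly more than $-f_1$, the inequality is strict: $F_{\tilde{\mathbb{U}}}(s,x)>-f_1(s,x)$. Finally $F_{\tilde{\mathbb{U}}}(s,x)$ is realized by the strategy in which $P_2$ continues until the process leaves $\tilde{\mathbb{U}}$ through its lower boundary; since throughout $\tilde{\mathbb{U}}\cap\{y<b\}$ player $P_1$ strictly prefers to continue (there $V<f_2$), $P_1$'s best response reproduces $F_{\tilde{\mathbb{U}}}$, giving $V(s,x)\geqslant F_{\tilde{\mathbb{U}}}(s,x)>-f_1(s,x)$ and contradicting $(s,x)\in E_1$. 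Hence no island exists, $E_1=\tilde{E}_1=\{y\leqslant\tilde{a}(t)\}$, and symmetrically, using Assumption \ref{monoF2}, $E_2=\tilde{E}_2=\{y\geqslant\tilde{b}(t)\}$; therefore $E=\{\tilde{a}(t)<y<\tilde{b}(t)\}$, which is simply connected with boundaries $\tilde{a}$ and $\tilde{b}$.

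I expect the delicate step to be the strict improvement in the third paragraph: converting the abstract monotonicity $F_{\tilde{\mathbb{U}}}\geqslant F_{\mathbb{U}}$ into a \emph{strict} inequality over the immediate-stop value $-f_1$, and verifying that $F_{\tilde{\mathbb{U}}}$ is a genuine lower bound for the game value $V$ (i.e.\ that $P_1$'s optimal response to $P_2$'s ``exit through the bottom'' strategy does not drop below $F_{\tilde{\mathbb{U}}}$). In the time-homogeneous case this non-local monotonicity is automatic from the signs of $\mathcal{L}(-f_1)+e^{ct}h$ and $\mathcal{L}f_2+e^{ct}h$ imposed in Assumption \ref{assumab}, but in the time-inhomogeneous setting it is exactly the role played by Assumptions \ref{monoF1} and \ref{monoF2}, and linking them cleanly to the saddle-point characterization of $V$ from Theorem \ref{Vcontinuous} is where the care is needed.
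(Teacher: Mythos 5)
Your overall strategy --- rule out ``islands'' of the stopping sets between $\tilde a$ and $a$ (resp.\ between $b$ and $\tilde b$) by attaching the cone $C_{(s,x)}$ and invoking Assumptions \ref{monoF1}--\ref{monoF2} --- is the same mechanism the paper uses, but the paper deploys it differently and your version has two concrete gaps. The paper takes $\mathbb U=\bar E$ itself, notes $F_{\bar E}(s,x)=V(s,x)$ by (\ref{VonBoundary}), applies Assumption \ref{monoF1} to get $F_{\bar E\cup C_{(s,x)}}(s,x)\geqslant V(s,x)$ for every $(s,x)\in\bar E$ with $x<a(s)$, and concludes from the optimality of $E$ (only $P_2$, the maximizer, acts below $a$) that the cone must already be contained in $\bar E$; sweeping $(s,x)$ over $\bar E$ below $a$ fills in the whole region between $\tilde a$ and $b$. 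You instead run a contradiction at a putative island point of $E_1$ using an artificial $\mathbb U$ hugging $(s,x)$ from below.

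First gap: your source of strictness is wrong. You claim the process ``collects strictly more than $-f_1$'' when it reaches the curve $a$, but by the definition of $R$ in (\ref{def_f}) the reward on exit at $Y_{\tau_{\mathbb U}}\leqslant a(\tau_{\mathbb U})$ is exactly $-f_1$; moreover the top edge of the cone lies on $a$ and is interior to $\tilde{\mathbb U}$ (since $\mathbb U$ extends above $a$), so the process does not exit there at all. Worse, inside the cone $\mathcal L(-f_1)+e^{ct}h<0$, so by Dynkin's formula the time spent there is locally unfavorable to $P_2$; any strict gain must come from the continuation value after crossing $a$, which requires an extra argument (e.g.\ the ball argument of Proposition \ref{inDab} applied at a point of $\overline{Dab}$ reached with positive probability, combined with the strong Markov property), not from the exit reward itself. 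Second gap: $F_{\tilde{\mathbb U}}$ is the payoff of a particular strategy \emph{pair}, so $V\geqslant F_{\tilde{\mathbb U}}$ does not follow merely from $P_2$ playing the exit-through-the-bottom rule; you need $\inf_\tau J(\tau,\sigma_0)\geqslant F_{\tilde{\mathbb U}}(s,x)$, and $P_1$'s best response to a fixed $\sigma_0$ is governed by the value of that one-sided stopping problem, not by $V$. This is patchable (e.g.\ take the upper boundary $n=\tilde b$ so that exiting through the top coincides with $P_1$'s saddle strategy $\hat\tau$ from Theorem \ref{Vcontinuous}), but as written the lower bound on $V$ is not established. To be fair, the paper's own proof is terse on exactly these points; its route --- absorbing the cone into $\bar E$ via optimality of the continuation region rather than manufacturing a strict inequality at an island point --- at least avoids having to assert a strict payoff gain on the curve $a$.
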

\begin{proof}
We just prove the first half as usual. For any point $(s,x)\in
\bar{E}$ with $x<a(s)$, consider the new continuation region $E\cup
C_{(s,x)}$, then by Assumption \ref{monoF1}, we have $F_{\bar{E}\cup
C_{(s,x)}}\geqslant F_{\bar{E}(s,x)}=V(s,x)$. Since in the region
below the curve $a$, only player $P_2$ wants to stop the game who
wants to maximize his payoff, the new continuation region $E\cup
C_{(s,x)}$ is certainly better than $E$, if not identical, and this
is a contradiction since $E$ is assumed to be optimal. Thus
$\bar{E}\cup C_{(s,x)}=\bar{E}$. Since this holds for any point
$(s,x)\in\bar{E}$ with $x<a(s)$, we know that the region between the
curve $\tilde{a}$ and the curve $b$ (since $E$ contains the region
$Dab$) is simply connected.
\end{proof}

Now it can be seen that $\tilde{E}_1=E_1,\tilde{E}_2=E_2$.
Furthermore, Proposition \ref{Econ} implies that
$\tilde{a},\tilde{b}$ can be taken as functions of $t$, with
possible jumps (in this case we line up these points and still get
continuous curves). It is worth noticing that $\tilde{a}$ and the
curve $a$ are not necessarily identical, and $\tilde{b}$ and the
curve $b$ are not necessarily identical either. Denote
$D\tilde{a}\tilde{b}$ the open connected region between the two
curves $\tilde{a}$ and $\tilde{b}$, $D\tilde{a}$ the region to the
left (in $y$) of $\tilde{a}$ and $D\tilde{b}$ the region to the
right (in $y$) of $\tilde{b}$, then $D\tilde{a}\tilde{b}=E$
excluding the line segment $[A,B]$, $D\tilde{a}=E_1$,
$D\tilde{b}=E_2$, see Figure \ref{region3}.

\begin{figure}[h!]
\centerline{\includegraphics[height=2in,
width=2.5in]{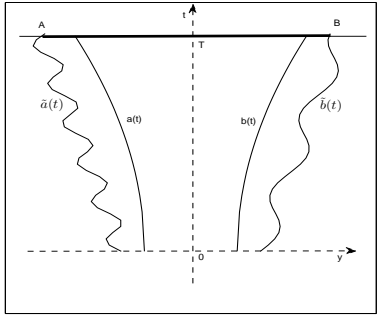}}\caption{Curves
$\tilde{a}(t),\tilde{b}(t)$} \label{region3}
\end{figure}

Certainly in the one dimensional time homogeneous case,
$\tilde{a},\tilde{b}$ are both constants, but in the time
inhomogeneous case, a $C^1$ regularity on $\tilde{a},\tilde{b}$ is
needed. Without loss of generality,
$[0,T]\times[\tilde{A},\tilde{B}]$ is assumed to be large enough to
contain the curves $\tilde{a}$ and $\tilde{b}$.

Now we can write $V(z)=J_z(\hat{\tau},\hat{\sigma})$. Let
$\phi(t,y)=-f_1(t,y)$ on the curve $y=\tilde{a}(t)$,
$\phi(t,y)=f_2(t,y)$ on the curve $y=\tilde{b}(t)$, $s\leqslant
t\leqslant T$, and $\phi(T,y)=g(y)$ on the line $t=T,
\tilde{a}(T)\leqslant y\leqslant \tilde{b}(T)$, we can further
rewrite $V$ as
\begin{equation}\label{VonBoundary}
V(s,x)=E_{(s,x)}\left[\int_s^{\tau_{D\tilde{a}\tilde{b}}}e^{ct}h(t,Y_t)dt+\phi(\tau_{D\tilde{a}\tilde{b}},Y_{\tau_{D\tilde{a}\tilde{b}}})\right],\
\ Y_s=x,
\end{equation} where $\tau_{D\tilde{a}\tilde{b}}$ is the first exit time of the
region $D\tilde{a}\tilde{b}$ defined as
$\tau_{D\tilde{a}\tilde{b}}(s,x)=\inf\{t>s:(t,Y_t)\notin
D\tilde{a}\tilde{b}\}$.

\begin{theorem}\label{Vreg} Assume Assumptions \ref{basic1} --
\ref{monoF2}. If $\tilde{a}(t),\tilde{b}(t)$ are $C^1$ functions of
$t$, then the function $V$ in (\ref{VonBoundary}) is bounded and
continuous and is the unique weak solution of the following problem:
\begin{eqnarray}
&&\mathcal{L}V(t,y)+e^{ct}h(t,y)=0,\ \ \ \ \ \ \ \ (t,y)\in D\tilde{a}\tilde{b},  \label{Vpde}\\
&&\mathcal{L}V(t,y)+e^{ct}h(t,y)<0,\ \ \ \ \ \ \ \ (t,y)\in D\tilde{a},\nonumber\\
&&\mathcal{L}V(t,y)+e^{ct}h(t,y)>0,\ \ \ \ \ \ \ \ (t,y)\in D\tilde{b},\nonumber\\
&&-f_1(t,y)<V(t,y)<f_2(t,y), \ \  (t,y)\in D\tilde{a}\tilde{b},\nonumber\\
&&V(t,y)=-f_1(t,y),\ \ \ \ (t,y)\in D\tilde{a},\nonumber\\
&&V(t,y)=f_2(t,y),\ \ \ \ \ (t,y)\in D\tilde{b},\nonumber\\
&&V(T,y)=g(y).\nonumber
\end{eqnarray}
Furthermore, by considering $V(t,y)$ as a mapping $V:[0,T]\to
H^2({\bf U})$, where ${\bf U}$ is the open interval
$(\tilde{A},\tilde{B})$, then $V\in L^2(0,T:H^2({\bf U}))\cap
L^\infty(0,T;H^1({\bf U}))$, $\frac{d V}{d t}\in L^2(0,T;L^2({\bf
U}))$.
\end{theorem}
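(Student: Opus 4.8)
The plan is to read (\ref{VonBoundary}) as the Feynman--Kac representation of a terminal--boundary value problem for the uniformly parabolic operator $\mathcal{L}$ on the time--dependent domain $D\tilde{a}\tilde{b}$, with lateral Dirichlet data $-f_1$ along $y=\tilde{a}(t)$ and $f_2$ along $y=\tilde{b}(t)$, and terminal data $g$ on $\{T\}\times[A,B]$. Boundedness is immediate from $|V|\leqslant M$, and continuity from Theorem \ref{Vcontinuous}; the real work is to produce a weak solution in the stated Sobolev classes and then to identify it with $V$. Because the space variable is one dimensional, I would first straighten the free boundaries: the map $\xi=\frac{y-\tilde{a}(t)}{\tilde{b}(t)-\tilde{a}(t)}$ sends $D\tilde{a}\tilde{b}$ onto the fixed cylinder $Q=(0,T)\times(0,1)$, and reversing time turns the terminal condition into an initial condition. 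Since $\tilde{a},\tilde{b}\in C^1$ and $\tilde{b}-\tilde{a}$ is bounded below by a positive constant, and since the spatial part of the map is affine in $y$, the transformed operator is again uniformly parabolic with bounded coefficients; the only coefficients carrying the geometry are the first--order (drift) terms, which pick up the continuous factors $\tilde{a}'(t),\tilde{b}'(t)$.

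On the fixed cylinder I would homogenize the boundary data by subtracting a smooth lift built from the $C^{1,2}$ functions $f_1,f_2$ and the terminal datum $g\in H^1$ (Assumption \ref{AB}); the corner compatibility at $(T,A)$ and $(T,B)$ is exactly what Assumption \ref{AB} guarantees, since $-f_1(T,\cdot)=g$ near $A$ and $f_2(T,\cdot)=g$ near $B$. Existence and uniqueness of a weak solution $V\in L^2(0,T;H^1)\cap C([0,T];L^2)$ then follow from the standard Galerkin / Lions--Lax--Milgram theory for parabolic equations, the source $e^{ct}h$ being bounded and hence in $L^2$. To upgrade the regularity I would run the usual parabolic energy estimates: testing the equation against $\partial_t V$ and against $-\partial_{\xi\xi}V$ (equivalently differencing in $\xi$) yields, using the $H^1$ initial datum, the bounds $V\in L^2(0,T;H^2({\bf U}))\cap L^\infty(0,T;H^1({\bf U}))$ and $\frac{dV}{dt}\in L^2(0,T;L^2({\bf U}))$. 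These classes are preserved when transforming back, because the change of variables is $C^1$ and affine in space.

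It remains to identify the weak solution with the game value (\ref{VonBoundary}) and to record the inequalities. The Sobolev regularity just obtained lets me apply It\^o's formula to the (mollified) solution along $Y_t$ up to $\tau_{D\tilde{a}\tilde{b}}$; taking expectations and passing to the limit recovers (\ref{VonBoundary}), and uniqueness closes the identification. Inside $D\tilde{a}\tilde{b}$ the equation (\ref{Vpde}) then holds in the weak (and a.e.) sense, while the strict inequalities $-f_1<V<f_2$ there are exactly Propositions \ref{inDab} and \ref{Econ}. In $D\tilde{a}$ one has $V=-f_1$, and since $D\tilde{a}=E_1\subset Da$ forces $y<\tilde{a}(t)\leqslant a(t)$, Assumption \ref{assumab} gives $\mathcal{L}V+e^{ct}h=\mathcal{L}(-f_1)+e^{ct}h<0$; the inequality in $D\tilde{b}$ is symmetric.

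I expect the $H^2$ bound up to the lateral boundaries to be the main obstacle, because the straightening map inherits only the $C^1$ regularity of $\tilde{a},\tilde{b}$, so the transformed drift coefficient is merely continuous rather than Lipschitz, which is below the threshold at which textbook second--order parabolic regularity is usually stated. The clean way around this is to approximate $\tilde{a},\tilde{b}$ by smooth curves, derive the energy estimates with constants depending only on $\|\tilde{a}\|_{C^1},\|\tilde{b}\|_{C^1}$ and the parabolicity constant, and pass to the limit; the one--dimensionality of ${\bf U}$ keeps these estimates elementary. A secondary delicate point is the rigorous justification of It\^o's formula at the level of $H^2$--in--space, $L^2$--in--time regularity, for which a Krylov--type mollification argument is the standard remedy.
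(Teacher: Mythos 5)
Your proposal is correct in substance and reaches the same destination by a genuinely different technical route. The paper also homogenizes the lateral data with a lift (the affine-in-$y$ function $B(t,y)$ interpolating $f_1(t,\tilde a(t))$ and $-f_2(t,\tilde b(t))$) and also invokes the standard improved-regularity theorem for weak solutions (Theorem 5, Chapter 7 of \cite{Evans10}, after reversing time), but it handles the non-cylindrical domain by \emph{extension by zero}: $\tilde V=V+B$ is set to $0$ outside $D\tilde a\tilde b$ and the problem is posed on the fixed rectangle $[0,T]\times[\tilde A,\tilde B]$ with forcing $\tilde f=1_{D\tilde a\tilde b}(\mathcal{L}B-e^{ct}h)$ and terminal datum $\tilde g=1_{[A,B]}(g+B(T,\cdot))\in H^1_0$, whereas you \emph{straighten} the free boundaries onto $(0,T)\times(0,1)$. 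Each choice buys something. The paper's extension keeps the operator's coefficients depending on $y$ alone (through $\sigma,\mu$), so Evans's theorem applies verbatim with no coefficient-regularity issue; the price is that declaring the zero-extension to be the weak solution of a single equation on the rectangle silently requires the flux to match across the free boundary (essentially the smooth-fit of Corollary \ref{bcond2}), a point the paper does not belabor. Your straightening avoids the indicator forcing and the matching issue, but, as you correctly flag, injects $\tilde a'(t),\tilde b'(t)$ into the drift, leaving merely continuous $t$-dependent coefficients below the textbook threshold for the $H^2$ estimate; your proposed remedy (approximate by smooth curves, track constants through $\|\tilde a\|_{C^1},\|\tilde b\|_{C^1}$ and the parabolicity constant, pass to the limit) is sound and in fact more careful than anything in the paper. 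Your identification step (It\^o/Krylov mollification up to $\tau_{D\tilde a\tilde b}$ plus uniqueness) matches the paper's, which argues via $h$-harmonicity of (\ref{VonBoundary}) and uniqueness of the weak solution; and your derivation of the sign conditions in $D\tilde a$, $D\tilde b$ from $E_1\subset Da$, $E_2\subset Db$ and Assumption \ref{assumab}, and of the strict obstacle inequalities from Propositions \ref{inDab} and \ref{Econ}, supplies details the paper defers to Theorem \ref{Vcontinuous}. No gap; just a different, and arguably more self-critical, execution of the same plan.
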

\begin{proof}
Let $\tilde{V}(t,y)=V(t,y)+B(t,y)$ on $D\tilde{a}\tilde{b}$, where
\begin{displaymath}B(t,y)=f_1(t,\tilde{a}(t))+\frac{y-\tilde{a}(t)}{\tilde{b}(t)-\tilde{a}(t)}(-f_1(t,\tilde{a}(t))-f_2(t,\tilde{b}(t))),\
t\in[0,T], \tilde{a}(t)\leqslant y\leqslant
\tilde{b}(t),\end{displaymath} and consider the following partial
differential equation on the extended rectangular region
$[0,T]\times \bar{\bf U}=[0,T]\times[\tilde{A},\tilde{B}]$:
\begin{eqnarray}\label{vtilde}
\mathcal{L}\tilde{V}(t,y)&=&1_{((t,y)\in
D\tilde{a}\tilde{b})}\left(\mathcal{L}B(t,y)-e^{ct}h(t,y)\right)\triangleq \tilde{f}(t,y),\nonumber\\
\tilde{V}(t,y)&=&0, \ \ \ (t,y)\notin D\tilde{a}\tilde{b},\nonumber\\
\tilde{V}(T,y)&=&1_{(y\in[A,B])}\left(g(y)+B(T,y)\right)\triangleq
\tilde{g}(y).
\end{eqnarray}
By the conditions on the functions $f_1,f_2,g,h$ and the $C^1$
property of curves $\tilde{a},\tilde{b}$, it can be seen that (for
each $t$)
\begin{displaymath} \tilde{f}\in L^2(0,T;L^2({\bf U})),
\end{displaymath}
and $\tilde{g}\in H^1_0({\bf U})$. If we change the variable $t$ to
$T-t$, the terminal condition of (\ref{vtilde}) becomes initial
condition, and Theorem 5 in Chapter 7 of \cite{Evans10} can be
applied. Thus there exists a unique weak solution $\tilde{V}$ to the
problem (\ref{vtilde}). Furthermore, $\tilde{V}\in L^2(0,T;H^2({\bf
U}))\cap L^\infty(0,T;H^1_0({\bf U}))$, $\frac{d}{d t}\tilde{V}\in
L^2(0,T;L^2({\bf U})))$.
 Now it is easy to recover $V(t,y)$ by writing
 \begin{equation}V(t,y)=\left\{ \begin{aligned}&&\tilde{V}(t,y)-B(t,y),\ \ \  (t,y)\in
 D\tilde{a}\tilde{b},\\
 &&-f_1(t,y), \ \ \ \ \ (t,y)\in D\tilde{a},\\
 &&f_2(t,y),\ \ \ \ \  (t,y)\in
 D\tilde{b},\end{aligned}\label{Vrecov}\right.
 \end{equation} thus the regularity result on $V(t,y)$ follows.

Now expression (\ref{VonBoundary}) shows that $V$ is $h$-harmonic,
and if $V$ is a $C^{1,2}(D\tilde{a}\tilde{b})$ function (that is,
$C^{1,0}\cap C^{0,2}$), (\ref{Vpde}) should hold on $V$. By the
uniqueness of the weak solution, we know that this solution is given
by (\ref{VonBoundary}). The rest of this theorem has been proved in
Theorem \ref{Vcontinuous}.
\end{proof}

Since $V$ is a weak solution of the PDE (\ref{Vpde}), we can not
assure that $V$ is $C^{1,2}(D\tilde{a}\tilde{b})$, however, we can
conclude the following regularity result.

\begin{corollary}\label{bcond1} Assuming the conditions in Theorem \ref{Vreg}.
$V(t,y)$ in (\ref{Vpde}) is in $C^{0,1}([0,T)\times {\bf U})\cap
C([0,T]\times{\bf U})$.
\end{corollary}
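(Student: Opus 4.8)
The plan is to prove the two memberships separately: the joint continuity of $V$ itself (including up to the terminal time) I would read off from the functional-analytic regularity already supplied by Theorem \ref{Vreg}, while the continuity of the spatial derivative $\partial_y V$ I would obtain from interior parabolic $L^p$ estimates for the equation $\mathcal{L}V+e^{ct}h=0$. For the claim $V\in C([0,T]\times{\bf U})$, recall that Theorem \ref{Vreg} gives $V\in L^2(0,T;H^2({\bf U}))$ with $\partial_t V\in L^2(0,T;L^2({\bf U}))$. By the Lions--Magenes interpolation/trace theorem the space $\{u:u\in L^2(0,T;H^2),\ u_t\in L^2(0,T;L^2)\}$ embeds continuously into $C([0,T];[H^2,L^2]_{1/2})=C([0,T];H^1({\bf U}))$, so $V\in C([0,T];H^1({\bf U}))$. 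Since ${\bf U}=(\tilde A,\tilde B)\subset{\mathbb R}$ is a bounded interval, the one-dimensional Sobolev embedding $H^1({\bf U})\hookrightarrow C^{0,1/2}(\bar{\bf U})$ then yields $V\in C([0,T];C(\bar{\bf U}))$, i.e. $V$ is jointly continuous on $[0,T]\times\bar{\bf U}$, which in particular covers $t=T$ and settles the second membership (consistently with Theorem \ref{Vcontinuous}).

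For the first membership $V\in C^{0,1}([0,T)\times{\bf U})$ I would upgrade the regularity away from the terminal surface. Fix $\delta\in(0,T)$. On $D\tilde a$ and $D\tilde b$ one has $V=-f_1$ and $V=f_2$, which are $C^{1,2}$ by Assumption \ref{assumab}, so $\partial_y V$ is jointly continuous (up to the curves $\tilde a,\tilde b$) there. On $D\tilde a\tilde b$, $V$ solves $\mathcal{L}V=-e^{ct}h$; the operator $\mathcal{L}$ is uniformly parabolic with continuous leading coefficient $\tfrac12\sigma^2\geqslant\tfrac12\epsilon^2$ and bounded drift $\sigma\sigma'+\mu$, and the right-hand side $-e^{ct}h$ is bounded (Assumption \ref{assumab}), hence in $L^p$ for every $p<\infty$. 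Interior and, up to the $C^1$ lateral walls $\tilde a,\tilde b$, boundary parabolic $L^p$ estimates then give $V\in W^{2,1}_{p}(D\tilde a\tilde b\cap\{t\leqslant T-\delta\})$ for every $1<p<\infty$; choosing $p>3$ and invoking the anisotropic parabolic Sobolev embedding $W^{2,1}_p\hookrightarrow C^{1+\beta,(1+\beta)/2}$ with $\beta=1-3/p>0$ (the spatial dimension being one), I conclude that $\partial_y V$ is parabolic-Hölder, in particular jointly continuous up to $\tilde a^+$ and $\tilde b^-$, on $\{t\leqslant T-\delta\}$. Letting $\delta\downarrow0$ gives $\partial_y V$ continuous in the interiors of $D\tilde a$, $D\tilde b$, $D\tilde a\tilde b$ and up to the free boundaries from either side, throughout $[0,T)$.

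The delicate point is matching $\partial_y V$ across the free boundaries $y=\tilde a(t),\,y=\tilde b(t)$, where $V$ switches between the PDE solution and the smooth obstacles $-f_1,f_2$: there the problem is a double-obstacle (variational) one and $C^{1,2}$ regularity cannot be expected, which is exactly why the statement stops at $C^{0,1}$ rather than asserting a classical solution. I would close this gap using the global bound of Theorem \ref{Vreg}: for a.e.\ $t$, $V(t,\cdot)\in H^2({\bf U})$, so $\partial_y V(t,\cdot)\in H^1({\bf U})\hookrightarrow C(\bar{\bf U})$ and hence the two one-sided spatial derivatives agree across $\tilde a(t)$ and $\tilde b(t)$ for a.e.\ $t$ — this is the smooth fit already encoded in the $H^2$ estimate. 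Since the two one-sided boundary traces of $\partial_y V$ along each $C^1$ curve are jointly continuous by the previous step, two continuous functions agreeing a.e.\ agree everywhere, so the match extends to every $t\in[0,T)$; this yields $\partial_y V\in C([0,T)\times{\bf U})$ and therefore $V\in C^{0,1}([0,T)\times{\bf U})$. The residual obstacle, and the reason the spatial-$C^1$ conclusion must exclude $t=T$, is the low regularity of the terminal datum $g\in H^1((A,B))$ (Assumption \ref{AB}): the $L^p$ estimate cannot be closed up to the terminal surface, so joint continuity of $\partial_y V$ is available only on $[0,T)$, whereas $V$ itself remains continuous up to and including $T$ by the interpolation of the first step.
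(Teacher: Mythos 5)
Your argument reaches the right conclusion but by a genuinely different and far more elaborate route than the paper. The paper's entire proof is one sentence: since Theorem \ref{Vreg} gives $V\in L^2(0,T;H^2({\bf U}))$ and ${\bf U}$ is a one-dimensional interval, the Sobolev embedding $H^2({\bf U})\hookrightarrow C^1(\bar{\bf U})$ lets one ``choose the $C^1$ version'' of $V(t,\cdot)$. That is all --- the paper does not separately argue joint continuity of $\partial_y V$ in $(t,y)$, nor does it invoke parabolic $L^p$ theory or any matching across the free boundaries. Your decomposition --- (i) Lions--Magenes interpolation giving $V\in C([0,T];H^1({\bf U}))$ and hence $V\in C([0,T]\times\bar{\bf U})$, (ii) interior $W^{2,1}_p$ estimates plus anisotropic Sobolev embedding giving H\"older continuity of $\partial_y V$ inside $D\tilde a\tilde b$, (iii) the smoothness of the obstacles on $D\tilde a,D\tilde b$, and (iv) the a.e.-in-$t$ smooth-fit from $V(t,\cdot)\in H^2$ combined with continuity of the one-sided traces to match $\partial_y V$ across $\tilde a,\tilde b$ for every $t$ --- is a legitimate and considerably more honest account of what the corollary actually asserts, since membership in $L^2(0,T;H^2)$ alone does not by itself yield joint continuity of $\partial_y V$ on $[0,T)\times{\bf U}$. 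Your closing observation about why $t=T$ must be excluded (the terminal datum $g$ is only $H^1$) is also a point the paper leaves implicit.

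One soft spot you should acknowledge: in step (ii) you claim $W^{2,1}_p$ estimates \emph{up to} the lateral boundaries $\tilde a,\tilde b$, but the standard parabolic $L^p$ boundary theory requires the lateral boundary to be better than $C^1$ (typically $C^{1,1}$, or $C^{1}$ with a Dini-type modulus); under the paper's hypothesis that $\tilde a,\tilde b$ are merely $C^1$, the up-to-the-boundary estimate is not automatic and would need a flattening or barrier argument. Without it, your continuity of the one-sided traces of $\partial_y V$ along the free boundaries --- which your a.e.-matching step relies on --- is not fully justified. This is a real gap in your write-up, though it must be said that the paper's own one-line proof glosses over exactly the same difficulty (and more), so your version is still the stronger of the two.
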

\begin{proof}
This fact  is due to the regularity results $V\in L^2(0,T:H^2({\bf
U}))$ in Theorem \ref{Vreg}, and of course the continuous version is
chosen in this case. That is, if a function is in $H^2({\bf U})$,
then the $C^1$ version of this function is chosen.
\end{proof}
With this result, it is straight forward to conclude the following.
\begin{corollary}\label{bcond2}
Assuming the conditions in Theorem \ref{Vreg}, then for any
$t\in[0,T)$,
\begin{displaymath}
  \frac{\partial V}{\partial y}(t,\tilde{a}(t))=-\frac{\partial
f_1}{\partial y}(t,\tilde{a}(t)),\ \ \
  \frac{\partial V}{\partial y}(t,\tilde{b}(t))=\frac{\partial
f_2}{\partial y}(t,\tilde{b}(t)).
\end{displaymath}
\end{corollary}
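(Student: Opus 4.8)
The plan is to leverage the $C^{0,1}$ regularity already secured in Corollary \ref{bcond1}, so that the result reduces to a one-sided limit computation. By Corollary \ref{bcond1} the function $V$ admits a $y$-derivative $\partial V/\partial y$ that is continuous on all of $[0,T)\times{\bf U}$; in particular $\partial V/\partial y\,(t,\cdot)$ is continuous across the free-boundary point $y=\tilde{a}(t)$, which (by the convention that $[0,T]\times[\tilde{A},\tilde{B}]$ is large enough to contain the curves) lies in the interior of ${\bf U}=(\tilde{A},\tilde{B})$. Since $D\tilde{a}=E_1$ and $V(t,y)=-f_1(t,y)$ identically for $(t,y)\in D\tilde{a}$, differentiating in $y$ gives $\partial V/\partial y\,(t,y)=-\partial f_1/\partial y\,(t,y)$ at every interior point of $D\tilde{a}$.

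Next I would pass to the limit along $y\uparrow\tilde{a}(t)$ from within $D\tilde{a}$. On the one hand, continuity of $\partial V/\partial y$ from Corollary \ref{bcond1} yields $\partial V/\partial y\,(t,y)\to\partial V/\partial y\,(t,\tilde{a}(t))$; on the other, Assumption \ref{assumab} gives $f_1\in C^{1,2}$, so $\partial f_1/\partial y$ is continuous and $-\partial f_1/\partial y\,(t,y)\to-\partial f_1/\partial y\,(t,\tilde{a}(t))$. Equating the two limits produces the first identity $\partial V/\partial y\,(t,\tilde{a}(t))=-\partial f_1/\partial y\,(t,\tilde{a}(t))$. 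The identity at $\tilde{b}(t)$ then follows by the symmetric argument, using $V=f_2$ on $D\tilde{b}=E_2$ together with $f_2\in C^{1,2}$.

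The only genuinely substantive input is the global $C^{0,1}$ smoothness of $V$, which is precisely the smooth-fit (smooth-pasting) phenomenon and which has already been extracted from the Sobolev estimate $V\in L^2(0,T;H^2({\bf U}))$ of Theorem \ref{Vreg} via the embedding $H^2\hookrightarrow C^1$ in the $y$ variable. Thus the main obstacle, namely establishing that the two one-sided derivatives agree rather than jumping across the boundary, is resolved upstream: once the continuous version of $\partial V/\partial y$ is selected, the derivative at the boundary is unambiguous and must coincide with the value computed from the side where $V$ has the explicit form $-f_1$ (respectively $f_2$). What remains to watch is only that the limit is taken from inside the region where $V$ equals the obstacle, and that the free boundary is interior to ${\bf U}$ so that $\partial V/\partial y$ is defined and continuous there; both are guaranteed by the preceding construction.
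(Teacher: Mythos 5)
Your argument is correct and is essentially the paper's own: the paper derives Corollary \ref{bcond2} directly from the $C^{0,1}$ regularity of Corollary \ref{bcond1} (stating only that it is ``straightforward''), and your proposal simply spells out that step --- $V=-f_1$ on $D\tilde{a}$ gives $\partial V/\partial y=-\partial f_1/\partial y$ there, and continuity of $\partial V/\partial y$ and of $\partial f_1/\partial y$ transfers the identity to the boundary point $\tilde{a}(t)$, with the symmetric argument at $\tilde{b}(t)$. No gaps beyond what the paper itself assumes.
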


Since $V(t,y)\geqslant -f_1(t,y)$ and $V(t,y)\leqslant f_2(t,y)$,
$\forall (t,y)\in(0,T)\times{\bf U}$, it is easy to see that
\begin{lemma}\label{bcond4} For any
$t\in[0,T)$,
\begin{eqnarray*}
&&\lim\inf_{\epsilon\to
0^+}\frac{V(t+\epsilon,\tilde{a}(t))-V(t,\tilde{a}(t))}{\epsilon}\geqslant
-f_{1t}(t,\tilde{a}(t)),\\
&&\lim\sup_{\epsilon\to
0^+}\frac{V(t,\tilde{a}(t))-V(t-\epsilon,\tilde{a}(t))}{\epsilon}\leqslant
-f_{1t}(t,\tilde{a}(t)),\\
\end{eqnarray*} and
\begin{eqnarray*}
&&\lim\sup_{\epsilon\to
0^+}\frac{V(t+\epsilon,\tilde{b}(t))-V(t,\tilde{b}(t))}{\epsilon}\leqslant
f_{2t}(t,\tilde{a}(t)),\\
&&\lim\inf_{\epsilon\to
0^+}\frac{V(t,\tilde{b}(t))-V(t-\epsilon,\tilde{b}(t))}{\epsilon}\geqslant
f_{2t}(t,\tilde{a}(t)).
\end{eqnarray*}
\end{lemma}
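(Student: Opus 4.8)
The plan is to reduce all four one-sided bounds to the elementary fact that a function attaining a global minimum at a point has nonnegative right-hand lower derivative and nonpositive left-hand upper derivative there. The engine is the combination of two facts already in hand: along the lower free boundary one has the exact identity $V(t,\tilde{a}(t))=-f_1(t,\tilde{a}(t))$ (because $\tilde{a}=\partial E_1$ is precisely where $V$ first meets the obstacle $-f_1$, by Proposition \ref{Econ}), while throughout $(0,T)\times{\bf U}$ the global inequality $V(t,y)\geqslant -f_1(t,y)$ holds. The point of the lemma is that freezing the spatial coordinate at $y_0=\tilde{a}(t_0)$ and varying only $t$ turns these two facts into a pointwise minimization statement.

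First I would fix $t_0\in[0,T)$, set $y_0=\tilde{a}(t_0)$, and introduce the auxiliary function $g(t)=V(t,y_0)+f_1(t,y_0)$, which is well defined and continuous for $t$ in a neighborhood of $t_0$ by the continuity of $V$ (Theorem \ref{Vcontinuous} and Corollary \ref{bcond1}) and of $f_1$. The two facts above read $g(t_0)=0$ and $g(t)\geqslant 0$, so $t_0$ is a global minimizer of $g$. Consequently the right-hand and left-hand one-sided conditions at a minimum give $\liminf_{\epsilon\to 0^+}\epsilon^{-1}(g(t_0+\epsilon)-g(t_0))\geqslant 0$ and $\limsup_{\epsilon\to 0^+}\epsilon^{-1}(g(t_0)-g(t_0-\epsilon))\leqslant 0$.

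The last step is to split each difference quotient of $g$ into the $V$-quotient plus the $f_1$-quotient, and to send the $f_1$-quotient to its genuine limit $f_{1t}(t_0,y_0)$ using the $C^{1,2}$ regularity of $f_1$ from Assumption \ref{assumab}. Because this second term converges (not merely as a liminf or limsup), it may be pulled out of the liminf and limsup. The right-minimum inequality then yields $\liminf_{\epsilon\to 0^+}\epsilon^{-1}(V(t_0+\epsilon,y_0)-V(t_0,y_0))\geqslant -f_{1t}(t_0,y_0)$, and the left-minimum inequality yields the matching $\limsup_{\epsilon\to 0^+}\epsilon^{-1}(V(t_0,y_0)-V(t_0-\epsilon,y_0))\leqslant -f_{1t}(t_0,y_0)$, which are exactly the first two displayed inequalities.

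For the boundary $\tilde{b}$ I would run the identical argument with the roles reversed: here $V(t,\tilde{b}(t))=f_2(t,\tilde{b}(t))$ and $V\leqslant f_2$ globally, so with $y_0=\tilde{b}(t_0)$ the function $h(t)=f_2(t,y_0)-V(t,y_0)$ is again nonnegative with a zero minimum at $t_0$. The same one-sided minimum conditions, combined with the identity $\liminf_{\epsilon}(c-x_\epsilon)=c-\limsup_\epsilon x_\epsilon$ needed to move the $V$-quotient to the correct side, produce $\limsup_{\epsilon\to 0^+}\epsilon^{-1}(V(t_0+\epsilon,y_0)-V(t_0,y_0))\leqslant f_{2t}(t_0,y_0)$ and $\liminf_{\epsilon\to 0^+}\epsilon^{-1}(V(t_0,y_0)-V(t_0-\epsilon,y_0))\geqslant f_{2t}(t_0,y_0)$. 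There is no genuine obstacle in this lemma; the only point demanding care is the sign bookkeeping, namely pairing each of the two one-sided minimum conditions (right versus left) with the correct Dini derivative and tracking the sign flip introduced by $V$ entering $h$ with a minus sign.
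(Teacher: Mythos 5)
Your argument is correct and is exactly the one the paper intends: Lemma \ref{bcond4} is stated there as an immediate consequence of the obstacle inequalities $-f_1\leqslant V\leqslant f_2$ combined with equality on the respective boundaries, which is precisely your global-minimum, first-order-condition formalization (split the difference quotient of $V\pm f_i$ and pass the smooth $f_i$-quotient to its limit). The only discrepancy is that the paper's statement writes $f_{2t}(t,\tilde{a}(t))$ on the right-hand side of the last two displays where your derivation (correctly) produces $f_{2t}(t,\tilde{b}(t))$; that is evidently a typo in the statement rather than a gap in your proof.
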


Furthermore, we can prove the following:
\begin{lemma}\label{bcond5} There is a constant $K>0$ such that for any $t\in[0,T)$,
\begin{eqnarray*}
&&\lim\sup_{\epsilon\to
0^+}\frac{V(t+\epsilon,\tilde{a}(t))-V(t,\tilde{a}(t))}{\epsilon}\leqslant
K,\\
&&-K\leqslant\lim\inf_{\epsilon\to
0^+}\frac{V(t,\tilde{a}(t))-V(t-\epsilon,\tilde{a}(t))}{\epsilon},\\
\end{eqnarray*} and
\begin{eqnarray*}
&&-K\leqslant\lim\inf_{\epsilon\to
0^+}\frac{V(t+\epsilon,\tilde{b}(t))-V(t,\tilde{b}(t))}{\epsilon},\\
&&\lim\sup_{\epsilon\to
0^+}\frac{V(t,\tilde{b}(t))-V(t-\epsilon,\tilde{b}(t))}{\epsilon}\leqslant
K.
\end{eqnarray*}
\end{lemma}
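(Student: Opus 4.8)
The plan is to reduce all four one-sided inequalities (together with their $\tilde b$ analogues) to a single estimate on the auxiliary nonnegative function $w:=V+f_1$ along the curve $\tilde a$. Since $V(t,\tilde a(t))=-f_1(t,\tilde a(t))$ while $V\ge -f_1$ everywhere, we have $w\ge 0$ with $w(t,\tilde a(t))=0$; and because $f_1\in C^{1,2}$, the increments $f_1(t\pm\epsilon,\tilde a(t))-f_1(t,\tilde a(t))$ are $O(\epsilon)$ with derivative $\mp f_{1t}(t,\tilde a(t))$. A short computation then shows that each of the four displayed inequalities follows once we establish, uniformly in $t$, the bound
\begin{equation*}
\limsup_{t'\to t}\frac{w(t',\tilde a(t))}{|t'-t|}\le K',\qquad t'=t\pm\epsilon .
\end{equation*}
Indeed, the lower bound $w\ge 0$ already produces the $-f_{1t}$ side recorded in Lemma \ref{bcond4}, while the missing constant $K$ is supplied by the upper bound $w=O(\epsilon)$. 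The whole problem is thus to bound $w$ at a point lying a time-distance $\epsilon$ off the free boundary, and the same reduction applies on $\tilde b$ with $w$ replaced by $w_2:=f_2-V\ge 0$, which vanishes on $\tilde b$.

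Next I would represent $w$ as a potential. Starting from the representation (\ref{VonBoundary}) of $V$ on $D\tilde a\tilde b$ and applying Dynkin's formula to $f_1(u,Y_u)$ between the start and the exit time $\tau_{D\tilde a\tilde b}$, one obtains
\begin{equation*}
w(s,x)=E_{(s,x)}\!\left[\int_s^{\tau_{D\tilde a\tilde b}}\big(e^{cu}h(u,Y_u)+\mathcal L(-f_1)(u,Y_u)\big)\,du+\big(\phi+f_1\big)(\tau_{D\tilde a\tilde b},Y_{\tau_{D\tilde a\tilde b}})\right].
\end{equation*}
The integrand is bounded in absolute value by a constant $C_0$ (as $h$ is bounded and $f_1\in C^{1,2}$), and the boundary datum $\phi+f_1$ is bounded by some $C_1$, is nonnegative, and vanishes identically on $\tilde a$ (there $\phi=-f_1$), while on $\tilde b$ it equals $f_1+f_2\ge 0$ and on the terminal line it equals $g+f_1\ge 0$ by Assumption \ref{basic1}. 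Hence, using $|\,\cdot\,|$ on the integrand,
\begin{equation*}
0\le w(s,x)\le C_0\,E_{(s,x)}\big[\tau_{D\tilde a\tilde b}-s\big]+C_1\,P_{(s,x)}\big[\text{exit does not occur through }\tilde a\big].
\end{equation*}

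The remaining step is the geometric estimate. Because $\tilde a\in C^1$, the starting point $(t',\tilde a(t))$ lies within vertical distance $|\tilde a(t')-\tilde a(t)|\le C_{\tilde a}|t'-t|=O(\epsilon)$ of the lower boundary $\tilde a$. For the uniformly nondegenerate one-dimensional diffusion $Y$ (recall $\sigma$ is bounded below by a positive constant and the drift is bounded) confined to the bounded space--time strip $D\tilde a\tilde b$, standard scale-function and expected-exit-time estimates give that both the mean exit time and the harmonic measure of the complementary part of the boundary are comparable to the distance to the near boundary; this yields $E[\tau_{D\tilde a\tilde b}-t']=O(\epsilon)$ and $P[\text{exit not through }\tilde a]=O(\epsilon)$. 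Substituting into the previous display gives $w(t',\tilde a(t))\le K'\epsilon$, whence the four inequalities for $\tilde a$; those for $\tilde b$ follow verbatim from the analogous potential representation of $w_2$.

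The hard part will be the uniformity of these diffusion estimates near the terminal corner $(T,A)=(T,\tilde a(T))$, where the moving boundary $\tilde a$ meets the terminal line and the horizon $T-t'$ shrinks: there the process exits predominantly through the top $\{T\}\times[A,B]$ rather than through $\tilde a$, so the factor $P[\text{exit not through }\tilde a]$ need not be $O(\epsilon)$. This is exactly where the terminal compatibility in Assumption \ref{AB} enters: since $g(A)=-f_1(T,A)$, the datum $g+f_1$ vanishes at the corner, so on the short-horizon top exits the contribution is governed by $(g+f_1)(T,Y_T)$, which tends to $0$ as $Y_T\to A$ at the rate of the modulus of continuity of $g$, together with $E|Y_T-A|=O(\sqrt{T-t'})$. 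One must then trade this smallness against the smallness of the probability of a top exit landing far from $A$, uniformly down to $t=T$; making this trade-off quantitative — and, if $g$ is only $H^1$, checking that its boundary modulus still suffices — is the technical heart of the argument, while everything else is the routine Dynkin and exit-time bookkeeping sketched above.
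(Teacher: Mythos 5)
Your reduction to the two-sided bound $w(t\pm\epsilon,\tilde a(t))=(V+f_1)(t\pm\epsilon,\tilde a(t))\leqslant K'\epsilon$ is correct, and the Dynkin representation of $w$ is legitimate, but the argument does not close. The exit-time and harmonic-measure estimates you invoke are stated for a fixed spatial interval, whereas here the domain has moving boundaries (repairable, since $\tilde a,\tilde b$ are $C^1$, but not free); more seriously, you yourself flag the terminal corner as ``the technical heart'' and leave it open, and under the paper's hypotheses it genuinely fails to come out. For $t$ near $T$ the process started at $(t+\epsilon,\tilde a(t))$ exits predominantly through $\{T\}\times[A,B]$, and since $g$ is only assumed to lie in $H^1((A,B))$ --- hence merely H\"{o}lder-$\tfrac12$ --- the boundary contribution $E\bigl[(g+f_1)(Y_T)1_{(\text{no hit of }\tilde a)}\bigr]$ computed from the killed-diffusion density is of order $\epsilon\,(T-t)^{-1/4}$, not $O(\epsilon)$ uniformly in $t$. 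So the single constant $K$ demanded by the lemma does not follow from the sketch as written; you would need $g$ Lipschitz (or some other input) to make the trade-off quantitative.

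The paper avoids all of this with a much shorter argument that you should compare against: to bound $V(t+\epsilon,\tilde a(t))$ from above it does not estimate distance to the boundary probabilistically, but simply slides to the nearby free-boundary point $(t+\epsilon,\tilde a(t+\epsilon))$, where $V=-f_1$ exactly, and invokes the uniform spatial Lipschitz continuity of $V$ already secured in Corollary \ref{bcond1} together with $|\tilde a(t+\epsilon)-\tilde a(t)|=|\tilde a'(t)|\epsilon+o(\epsilon)$. This yields $V(t+\epsilon,\tilde a(t))\leqslant -f_1(t+\epsilon,\tilde a(t+\epsilon))+M_1|\tilde a'(t)|\epsilon+|o(\epsilon)|$, after which the difference quotient is controlled by the smoothness of $f_1$ and the boundedness of $\tilde a'$; the case where $(t+\epsilon,\tilde a(t))$ falls in $D\tilde a$ is trivial since there $V=-f_1$, and the remaining three inequalities are symmetric. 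In short, the spatial regularity of $V$ plus the $C^1$ free boundary already carries the one-sided temporal bound; no exit-time analysis is needed, and that is precisely what rescues the uniformity that your probabilistic route loses at $t\to T$.
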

\begin{proof}
It suffices to prove the first inequality and the rest can be done
in a similar manner.  By the $C^1$ property of $\tilde{a}$, we can
find a small interval $(t,t+\epsilon)$ such that for all the points
$(s,\tilde{a}(t))$ with $s\in (t,t+\epsilon)$, either
$(s,\tilde{a}(t))\in D\tilde{a}$, or $(s,\tilde{a}(t))\in
D\tilde{a}\tilde{b}$. In the former case,
$V(s,\tilde{a}(t))=-f_1(s,\tilde{a}(t))$ and the result
automatically holds.

Now let us suppose $(s,\tilde{a}(t))\in D\tilde{a}\tilde{b}$,
$\forall s\in (t,t+\epsilon)$. We can write
\begin{displaymath}
\tilde{a}(t+\epsilon)=\tilde{a}(t)+\tilde{a}'(t)\epsilon+o(\epsilon).
\end{displaymath} Since $V\in C^{0,1}([0,T)\times {\bf U})\cap C([0,T]\times{\bf U})$
by Corollary \ref{bcond1}, $V(t,y)$ is uniformly Lipschitz in $y$,
thus there is a constant $M_1$ such that
\begin{displaymath}
|V(t+\epsilon,\tilde{a}(t))-V(t+\epsilon,\tilde{a}(t+\epsilon))|\leqslant
M_1|\tilde{a}'(t)\epsilon|+|o(\epsilon)|.
\end{displaymath}
But
$V(t+\epsilon,\tilde{a}(t+\epsilon))=-f_1(t+\epsilon,\tilde{a}(t+\epsilon))$,
hence
\begin{displaymath}
V(t+\epsilon,\tilde{a}(t))\leqslant
-f_1(t+\epsilon,\tilde{a}(t+\epsilon))+M_1|\tilde{a}'(t)\epsilon|+|o(\epsilon)|.
\end{displaymath} Therefore
\begin{eqnarray*}
&&\lim\sup_{\epsilon\to
0^+}\frac{V(t+\epsilon,\tilde{a}(t))-V(t,\tilde{a}(t))}{\epsilon}\\
&&\leqslant\lim\sup_{\epsilon\to
0^+}\frac{-f_1(t+\epsilon,\tilde{a}(t+\epsilon))+M_1|\tilde{a}'(t)\epsilon|+|o(\epsilon)|-(-f_1(t,\tilde{a}(t)))}{\epsilon}\\
&&=\lim_{\epsilon\to
0^+}\frac{-f_1(t+\epsilon,\tilde{a}(t+\epsilon))-(-f_1(t,\tilde{a}(t)))}{\epsilon}+M_1|\tilde{a}'(t)|<\infty
\end{eqnarray*} by the smoothness of $f_1$.
\end{proof}

\section{Time Inhomogeneous Singular Control}\label{tiSingular}
In this section we assume the conditions in Theorem \ref{Vreg}. Let
the functions $V,h$ be given in Theorem \ref{Vreg}, and define the
function $W(s,x)$ on $[0,T]\times {\bf U}$ as
\begin{equation}\label{wdef}
W(s,x)=\int_0^x e^{-cs}V(s,y)dy,
\end{equation}we shall investigate the properties of
$W$. Firstly it is easy to see that $W\in C^{0,2}([0,T)\times {\bf
U})\cap C^{0,1}([0,T]\times {\bf U})$ by Corollary \ref{bcond1}, and
on $[0,T)\times {\bf U}$,
\begin{displaymath}
\frac{\partial W}{\partial x}=e^{-cs}V,\ \frac{\partial^2
W}{\partial x^2}=\frac{\partial e^{-cs}V}{\partial x}.
\end{displaymath}

What is not very obvious is the following proposition:
\begin{proposition}
$W\in C^{1,0}([0,T)\times{\bf U})$.
\end{proposition}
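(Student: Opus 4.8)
The statement asks for continuous differentiability of $W$ in the time variable $s$ (the ``$1$'' slot), continuity in $x$ being immediate. Since $W(s,x)=e^{-cs}\Phi(s,x)$ with $\Phi(s,x):=\int_0^x V(s,y)\,dy$ and $e^{-cs}$ smooth, it suffices to show $\Phi\in C^{1,0}$, i.e. that $\partial_s\Phi$ exists and is continuous. The plan is to (i) produce the weak $s$-derivative of $\Phi$ from the regularity of $V$ in Theorem \ref{Vreg}, (ii) rewrite that derivative using the PDE so that the only ``rough'' ingredient $V_{yy}$ is integrated away in favour of $V_y$, which is continuous by Corollary \ref{bcond1}, and (iii) conclude.

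By Theorem \ref{Vreg}, $V\in L^2(0,T;H^2(\mathbf{U}))$ and $V_t\in L^2(0,T;L^2(\mathbf{U}))$; in particular the $L^2(\mathbf{U})$-valued map $s\mapsto V(s,\cdot)$ is absolutely continuous with $V(s_2,\cdot)-V(s_1,\cdot)=\int_{s_1}^{s_2}V_t(r,\cdot)\,dr$ in $L^2(\mathbf{U})$. Pairing with $\mathbf{1}_{[0,x]}\in L^2(\mathbf{U})$ and using Fubini gives $\Phi(s_2,x)-\Phi(s_1,x)=\int_{s_1}^{s_2}\psi(r,x)\,dr$ with $\psi(s,x):=\int_0^x V_t(s,y)\,dy$. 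Hence $\Phi(\cdot,x)$ is absolutely continuous with a.e.\ derivative $\psi(\cdot,x)$, and to upgrade this to $\Phi\in C^{1,0}$ it remains only to prove that $\psi$ is jointly continuous on $[0,T)\times\mathbf{U}$.

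This is the crux. For $(s,x)$ with $[0,x]$ lying in the middle strip $\tilde{a}(s)\le y\le\tilde{b}(s)$, the PDE (\ref{Vpde}) gives, for a.e.\ $s$, $V_t=-\tfrac12\sigma^2 V_{yy}-(\sigma\sigma'+\mu)V_y-e^{cs}h$; integrating in $y$ and integrating the $V_{yy}$-term by parts (legitimate since $V(s,\cdot)\in H^2(\mathbf{U})$, so $V_y(s,\cdot)$ is absolutely continuous), the two $\sigma\sigma'V_y$ contributions cancel and one obtains
\[
\psi(s,x)=-\tfrac12\sigma^2(x)V_y(s,x)+\tfrac12\sigma^2(0)V_y(s,0)-\int_0^x\mu(y)V_y(s,y)\,dy-e^{cs}\int_0^x h(s,y)\,dy .
\]
Every term on the right is continuous in $(s,x)$: $\sigma,\mu,h$ are continuous, and $V_y$ is continuous by Corollary \ref{bcond1}. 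For $x>\tilde{b}(s)$ I would split $\psi(s,x)=\psi(s,\tilde{b}(s))+\int_{\tilde{b}(s)}^x\partial_s f_2(s,y)\,dy$ and use the smooth-fit identity $V_y(s,\tilde{b}(s))=\partial_y f_2(s,\tilde{b}(s))$ of Corollary \ref{bcond2}, the $C^1$ regularity of $\tilde{b}$ (Leibniz rule), and the $C^{1,2}$ smoothness of $f_2$; the symmetric argument with $f_1,\tilde{a}$ handles $x<0$ crossing $\tilde{a}(s)$. This establishes continuity of $\psi$ on all of $[0,T)\times\mathbf{U}$. With $\psi$ continuous and equal a.e.\ to the derivative of the absolutely continuous function $\Phi(\cdot,x)$, the fundamental theorem of calculus yields $\partial_s\Phi=\psi$ classically and continuous; thus $\partial_s W=-ce^{-cs}\Phi+e^{-cs}\psi$ is continuous, so $W\in C^{1,0}([0,T)\times\mathbf{U})$.

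I expect the main obstacle to be precisely the integration-by-parts step: $V_t$ is only $L^2$ in time, so $\psi$ carries no a priori continuity in $s$, and it is exactly the trading of the merely-$L^2$ quantity $V_{yy}$ for the continuous $V_y$ that rescues continuity; the accompanying book-keeping across the free boundaries $\tilde{a},\tilde{b}$ (through smooth fit and their $C^1$ regularity) is the delicate part of the verification.
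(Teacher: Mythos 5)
Your proof is correct, and its computational core is the same as the paper's: substitute the PDE (\ref{Vpde}) for $V_s$ under the integral $\int_0^x$, integrate the $\tfrac12\sigma^2V_{yy}$ term by parts so that the $\sigma\sigma'V_y$ contributions cancel, and land on an expression involving only $V_y$, which is continuous by Corollary \ref{bcond1}; your formula for $\psi$ reproduces the paper's formula for $\partial W/\partial s$ on $D\tilde a\tilde b$ exactly. Where you genuinely diverge is in the two delicate justifications. First, you derive differentiability in $s$ from the abstract regularity $V_t\in L^2(0,T;L^2({\bf U}))$ (absolute continuity of $s\mapsto\Phi(s,x)$, identification of the a.e.\ derivative, then upgrade via continuity of the candidate and the fundamental theorem of calculus), whereas the paper differentiates under the integral sign directly and is less explicit about why that interchange is legitimate; your route is the more careful one. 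Second, to get continuity of $W_s$ across the free boundaries you split the integral at $\tilde b(s)$ (resp.\ $\tilde a(s)$), use $V=f_2$ (resp.\ $-f_1$) beyond the boundary together with the smooth-fit identity of Corollary \ref{bcond2}, the $C^1$ regularity of the boundary and the Leibniz rule. The paper instead compares $\partial_sW(s,\tilde a(s)^+)$ with $\partial_sW(s,\tilde a(s))$ and controls the discrepancy, an integral over a degenerating interval, by the one-sided difference-quotient bounds of Lemmas \ref{bcond4} and \ref{bcond5} on the time increments of $V$ at the boundary. Your argument bypasses those two lemmas entirely and replaces them with the explicit form of $V$ outside the continuation region; both treatments rest on the same inputs (smooth fit, $C^1$ boundaries, smoothness of $f_1,f_2$) and each is a legitimate way to close the argument.
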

\begin{proof}
For any $(s,x)\in D\tilde{a}\tilde{b}$, by (\ref{wdef}) and
(\ref{Vpde}) and using integration by parts, we get
\begin{eqnarray*}
&&\frac{\partial W}{\partial s}(s,x)=-c\int_0^x
e^{-cs}V(s,y)dy+\int_0^x e^{-cs}V_s(s,y)dy\\
&&=-c\int_0^x e^{-cs}V(s,y)dy\\
&&-\int_0^x
e^{-cs}\left(\frac{1}{2}\sigma^2(y)V_{xx}(s,y)+(\sigma(y)\sigma'(y)+\mu(y))V_x(s,y)+e^{cs}h(s,y)\right)dy\\
&&=-c
W(s,x)-e^{-cs}\left(\frac{1}{2}\sigma^2(x)V_x(s,x)-\frac{1}{2}\sigma^2(0)V_x(s,0)\right)\\
&&-\int_0^x \left(e^{-cs}\mu(y)V_x(s,y)+h(s,y)\right)dy,
\end{eqnarray*} which is bounded and continuous.

Now we can send $x\to\tilde{a}(s)$, and by the continuity of $W$ and
$V_x$, we know that $\frac{\partial W}{\partial
s}(s,\tilde{a}(s)^+)$ exists which is
\begin{eqnarray*}
&&\frac{\partial W}{\partial s}(s,\tilde{a}(s)^+)=-c
W(s,\tilde{a}(s))-e^{-cs}\left(\frac{1}{2}\sigma^2(\tilde{a}(s))V_x(s,\tilde{a}(s))-\frac{1}{2}\sigma^2(0)V_x(s,0)\right)\\
&&-\int_0^{\tilde{a}} \left(e^{-cs}\mu(y)V_x(s,y)+h(s,y)\right)dy.
\end{eqnarray*} Similarly $\frac{\partial W}{\partial s}(s,\tilde{b}(s)^-)$
exists.

If $(s,x)$ is in the interior of $D\tilde{a}$, $V(s,x)=-f_1(s,x)$,
and if $(s,x)$ is in the interior of $D\tilde{b}$,
$V(s,x)=f_2(s,x)$. Now let us consider the quantity $\frac{\partial
W}{\partial s}(s,\tilde{a}(s))$ (and similarly $\frac{\partial
W}{\partial s}(s,\tilde{b}(s))$). We have shown that $\frac{\partial
W}{\partial s}(s,\tilde{a}(s)^+)$ is well defined, and the
difference between $\frac{\partial W}{\partial s}(s,\tilde{a}(s)^+)$
and $\frac{\partial W}{\partial s}(s,\tilde{a}(s))$ happens in the
following integral over a set of zero Lebesgue measure
\begin{eqnarray*}
&&\lim_{\epsilon\to
0}\int_{\tilde{a}(s)^+}^{\tilde{a}(s)}\frac{e^{-c(s+\epsilon)}V(s+\epsilon,y)-e^{-cs}V(s,y)}{\epsilon}dy\\
&&=-c\int_{\tilde{a}(s)^+}^{\tilde{a}(s)}e^{-cs}V(s,y)dy+e^{-cs}\lim_{\epsilon\to
0}\int_{\tilde{a}(s)^+}^{\tilde{a}(s)}\frac{V(s+\epsilon,y)-V(s,y)}{\epsilon}dy\\
&&=e^{-cs}\lim_{\epsilon\to
0}\int_{\tilde{a}(s)^+}^{\tilde{a}(s)}\frac{V(s+\epsilon,y)-V(s,y)}{\epsilon}dy.
\end{eqnarray*}
Here Lemma \ref{bcond4} and \ref{bcond5} come into play and we
conclude that the above quantity is zero. Thus $\frac{\partial
W}{\partial s}(s,\tilde{a}(s)^+)=\frac{\partial W}{\partial
s}(s,\tilde{a}(s))$ the continuity of $W_s$ follows.
\end{proof}

\begin{remark}
As a conclusion, by integrating $V(s,y)$ as in (\ref{wdef}), we not
only obtain the $C^{0,2}$ regularity of $W(s,x)$, but also the
$C^{1,0}$ regularity of $W(s,x)$.
\end{remark}

For each $s\in[0,T)$, define the function
\begin{displaymath}
C(s)=-\frac{1}{2}\sigma^2(\tilde{a}(s))\frac{\partial^2 W}{\partial
x^2}(s,\tilde{a}(s))-\mu(\tilde{a}(s))\frac{\partial W}{\partial
x}(s,\tilde{a}(s))-\frac{\partial W}{\partial
s}(s,\tilde{a}(s))-\int_0^{\tilde{a}(s)}h(s,y)dy,
\end{displaymath} then obviously $C(s)$ is a continuous function of
$s$. If we define \begin{displaymath}H(s,x)=\int_0^xh(s,y)dy +
C(s),\end{displaymath} then since $h(s,y)$ can be arbitrarily chosen
(under the assumptions in Theorem \ref{Vreg}), $H$ can also be an
arbitrary function being taken as the holding cost in
(\ref{cost_s}). Similarly, in order to link the terminal costs of
the singular control and the Dynkin game, we put
\begin{displaymath}G(x)=\int_0^x e^{-cT}g(y)dy.\end{displaymath}

Now that on $D\tilde{a}\tilde{b}$, $V$ satisfies
\begin{displaymath}
\frac{1}{2}\sigma^2(x)\frac{\partial^2 V}{\partial
x^2}+(\sigma(x)\sigma'(x)+\mu(x))\frac{\partial V}{\partial
x}+\frac{\partial V}{\partial s}+e^{cs}h(s,x)=0,
\end{displaymath} which is equivalent to
\begin{equation}\label{tVpde}
\frac{1}{2}\sigma^2(x)e^{-cs}\frac{\partial^2 V}{\partial
x^2}+(\sigma(x)\sigma'(x)+\mu(x))e^{-cs}\frac{\partial V}{\partial
x}+e^{-cs}\frac{\partial V}{\partial s}+h(s,x)=0.
\end{equation} We may now rewrite (\ref{tVpde}) as
\begin{displaymath}
\frac{1}{2}\sigma^2(x)\frac{\partial^2 e^{-cs}V}{\partial
x^2}+\sigma(x)\sigma'(x)\frac{\partial e^{-cs}V}{\partial
x}+\mu(x)\frac{\partial e^{-cs}V}{\partial x}+\frac{\partial
e^{-cs}V}{\partial s} + ce^{-cs}V+h(s,x)=0.
\end{displaymath} Since $\mu'(x)=c$, we further get
\begin{equation}\label{ttvpde}
\frac{1}{2}\sigma^2(x)\frac{\partial^2 e^{-cs}V}{\partial
x^2}+\sigma(x)\sigma'(x)\frac{\partial e^{-cs}V}{\partial
x}+\mu(x)\frac{\partial e^{-cs}V}{\partial x}+
\mu'(x)e^{-cs}V+\frac{\partial e^{-cs}V}{\partial s} +h(s,x)=0.
\end{equation} Integrating (\ref{ttvpde}) from $0$ to $x$ we get
\begin{equation}\label{wpde}
\frac{1}{2}\sigma^2(x)\frac{\partial^2 W}{\partial
x^2}+\mu(x)\frac{\partial W}{\partial x}+\frac{\partial W}{\partial
s}+H=0,
\end{equation} which is the HJB equation of the value function of the singular control problem.

Firstly (\ref{wpde}) certainly holds at $x=\tilde{a}(s)$ by
construction. That is, if we construct, for each $s$, a function
$U_s(x)=\frac{1}{2}\sigma^2(x)\frac{\partial^2 W}{\partial
x^2}+\mu(x)\frac{\partial W}{\partial x}+\frac{\partial W}{\partial
s}+H$, then $U_s(\tilde{a}(s))=0$. Furthermore, $U'_s(x)=0$ for
$x\in(\tilde{a}(s),\tilde{b}(s))$ by (\ref{ttvpde}), so (\ref{wpde})
holds for $x\in(\tilde{a}(s),\tilde{b}(s))$. Actually we can say
something more about $W$. By Theorem \ref{Vreg}, we see that
$U_s'(x)<0$ for $x<\tilde{a}(s)$, and $U_s'(x)>0$ for
$x>\tilde{b}(s)$, thus
\begin{equation}\label{wpde1}
\frac{1}{2}\sigma^2(x)\frac{\partial^2 W}{\partial
x^2}+\mu(x)\frac{\partial W}{\partial x}+\frac{\partial W}{\partial
s}+H>0,\ \ x\in
[\tilde{A},\tilde{a}(s))\cup(\tilde{b}(s),\tilde{B}].
\end{equation}
As a summary, we have the following theorem:
\begin{theorem}\label{wmain}
 Assume Assumptions \ref{basic1} --
\ref{monoF2}. If $\tilde{a}(s),\tilde{b}(s)$ are $C^1$ functions of
$s$, then there exists a $C^{1,2}([0,T)\times {\bf U})\cap
C^{0,1}([0,T]\times {\bf U})$ function $W(s,x)$ which satisfies
\begin{eqnarray}
\frac{1}{2}\sigma^2(x)\frac{\partial^2 W}{\partial
x^2}(s,x)+\mu(x)\frac{\partial W}{\partial x}(s,x)+\frac{\partial
W}{\partial s}(s,x)+H(s,x)=0,\ \
x\in(\tilde{a}(s),\tilde{b}(s)),&&\label{weqn}\\
\frac{1}{2}\sigma^2(x)\frac{\partial^2 W}{\partial
x^2}(s,x)+\mu(x)\frac{\partial W}{\partial x}(s,x)+\frac{\partial
W}{\partial s}(s,x)+H(s,x)>0,\ \ x\in
[\tilde{A},\tilde{a}(s))\cup(\tilde{b}(s),\tilde{B}].&&\label{wineq}\\
\frac{\partial W}{\partial x}(s,x)=-e^{-cs}f_1(s,x),\
x\leqslant\tilde{a}(s),\ \ \ \frac{\partial W}{\partial
x}(s,x)=e^{-cs}f_2(s,x),\ x\geqslant\tilde{b}(s), \label{bcond3}&&\\
-e^{-cs}f_1(s,x)<\frac{\partial W}{\partial
x}(s,x)<e^{-cs}f_2(s,x),\ \
x\in(\tilde{a}(s),\tilde{b}(s)),&&\\
W(T,x)=G(x),\ \ x\in {\bf U}.&&
\end{eqnarray}
\end{theorem}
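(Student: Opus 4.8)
The plan is to assemble the pieces already established in this section, since Theorem \ref{wmain} is essentially a consolidation of the preceding computations. I would organize the argument into three parts: regularity, the PDE together with its inequality, and the boundary/terminal conditions.

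First, for regularity, I would note that by definition $W_x(s,x)=e^{-cs}V(s,x)$ on $[0,T)\times{\bf U}$. Since Corollary \ref{bcond1} gives $V\in C^{0,1}([0,T)\times{\bf U})\cap C([0,T]\times{\bf U})$, differentiating once more in $x$ shows $W\in C^{0,2}([0,T)\times{\bf U})\cap C^{0,1}([0,T]\times{\bf U})$. The Proposition immediately preceding the theorem supplies $W\in C^{1,0}([0,T)\times{\bf U})$, i.e.\ continuity of $W_s$ up to and across the free boundaries. Combining the two yields $W\in C^{1,2}([0,T)\times{\bf U})$, as claimed.

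Second, for the PDE, I would start from the weak equation $\mathcal{L}V+e^{cs}h=0$ holding on $D\tilde{a}\tilde{b}$ (Theorem \ref{Vreg}), rewrite it in the form (\ref{ttvpde}) using $\mu'(x)=c$, and integrate in $x$ from $0$ to $x$. This produces (\ref{wpde}) up to an additive function of $s$, and the constant of integration is fixed precisely by the definition of $C(s)$ so that (\ref{wpde}) holds at $x=\tilde{a}(s)$. Setting $U_s(x):=\frac{1}{2}\sigma^2 W_{xx}+\mu W_x+W_s+H$, I would then verify $U_s(\tilde{a}(s))=0$ and, by differentiating once in $x$ and invoking (\ref{ttvpde}), $U_s'(x)=0$ on $(\tilde{a}(s),\tilde{b}(s))$, which gives (\ref{weqn}) on the whole continuation strip. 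The sign conditions from Theorem \ref{Vreg} --- namely $\mathcal{L}V+e^{cs}h<0$ in $D\tilde{a}$ and $>0$ in $D\tilde{b}$ --- translate into $U_s'(x)<0$ for $x<\tilde{a}(s)$ and $U_s'(x)>0$ for $x>\tilde{b}(s)$, whence (\ref{wineq}). The boundary and terminal conditions then follow directly from $W_x=e^{-cs}V$ together with the pointwise identities $V=-f_1$ on $D\tilde{a}$, $V=f_2$ on $D\tilde{b}$, and the strict inequality $-f_1<V<f_2$ on $D\tilde{a}\tilde{b}$; the terminal condition $W(T,x)=G(x)$ is immediate from $V(T,y)=g(y)$ and $G(x)=\int_0^x e^{-cT}g(y)\,dy$.

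The main obstacle is not any of these steps individually but the $C^{1,0}$ regularity quoted from the preceding Proposition: the continuity of $W_s$ across the free boundaries $\tilde{a}$ and $\tilde{b}$. Because $V$ is only a weak ($H^2$-in-$x$) solution and is not known to be $C^{1,2}$, one cannot simply differentiate through the boundary; instead one must show that the one-sided time-derivative limits of $W$ agree, and the discrepancy reduces to a boundary integral of $(V(s+\epsilon,y)-V(s,y))/\epsilon$ over a vanishing $y$-interval near $\tilde{a}(s)$. The one-sided Dini-derivative bounds in Lemmas \ref{bcond4} and \ref{bcond5}, made available by the $C^1$ regularity of the free boundaries, are exactly what force this defect to vanish. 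I would therefore expect the verification $W_s(s,\tilde{a}(s)^+)=W_s(s,\tilde{a}(s))$ to be the delicate point, with everything else following by direct substitution of the identities already recorded above.
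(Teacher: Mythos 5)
Your proposal follows essentially the same route as the paper: the paper presents Theorem \ref{wmain} explicitly ``as a summary'' of the preceding computations in Section \ref{tiSingular}, namely the $C^{0,2}$ regularity from $W_x=e^{-cs}V$ and Corollary \ref{bcond1}, the $C^{1,0}$ regularity from the preceding Proposition (whose delicate boundary-matching step via Lemmas \ref{bcond4} and \ref{bcond5} you correctly single out), and the $U_s(x)$ argument with $U_s(\tilde{a}(s))=0$ by construction of $C(s)$ and the sign of $U_s'$ off the continuation strip. Your assembly of these pieces, including the boundary and terminal conditions read off from $V=-f_1$ on $D\tilde{a}$, $V=f_2$ on $D\tilde{b}$, and $V(T,\cdot)=g$, matches the paper's argument.
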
 Notice that $W$ is $C^{1,2}$ on $[0,T)\times {\bf U}$, and
$f_1,f_2$ are smooth functions, more properties can be derived from
(\ref{bcond3}), for example, we have
\begin{displaymath}
\frac{\partial^2 W}{\partial x^2}(s,x)=-e^{-cs}\frac{\partial
f_1}{\partial x}(s,x), \ \ x\leqslant\tilde{a}(s),\ \
\frac{\partial^2 W}{\partial x^2}(s,x)=e^{-cs}\frac{\partial
f_2}{\partial x}(s,x), \ \ x\geqslant\tilde{b}(s).
\end{displaymath}

Let $\mathcal{S}^*$ be the control policy to reflect the process
$X_t$ within the region $D\tilde{a}\tilde{b}$. That is, whenever the
process touches the curve $\tilde{a}$, $A_t^{(1)}$ in (\ref{omodel})
increases and pushes $X_t$ back to $D\tilde{a}\tilde{b}$, with
smallest possible effort; whenever the process touches the curve
$\tilde{b}$, $A_t^{(2)}$ increases and pushes $X_t$ back to
$D\tilde{a}\tilde{b}$, with smallest possible effort.

We call $\mathcal{S}=(A_t^{(1)},A_t^{(2)})$ an admissible control if
\begin{enumerate}
\item There is a filtered measurable space $(\Omega,\{\mathcal{F}_t\}_{t\geqslant
0})$ subject to usual conditions and a probability measure $\{P_{
x}\}$ on it such that $\{{ X}_t\}_{t\geqslant 0}$ is an
$\{\mathcal{F}_t\}$-adapted process; $(A_t^{(1)},A_t^{(2)})$ are
right continuous $\{\mathcal{F}_t\}$ measurable processes with
bounded variation, and $A_t^{(1)}-A_t^{(2)}$ is the minimal
decomposition of a bounded variation process into a difference of
two nondecreasing processes.
\item There is $\{\mathcal{F}_t\}$ adapted Brownian motion $B_t$ such
that the following equation
\begin{displaymath}
dX_t=\mu(X_t)dt+\sigma(X_t)dB_t+dA_t^{(1)}-dA_t^{(2)}, \ \ X_s=x,
\end{displaymath} holds $P_x$ a.s., $\forall s\in[0,T)$,  and the controlled
process $X_t$ is a reflected $\mathcal{F}_t$ measurable process
within a compact region containing $(s,x)$ in $[0,T]\times{\mathbb
R}$ with continuous boundary.
\end{enumerate}
\begin{remark} The probability space $\Omega$ with the filtration
$\{\mathcal{F}_t\}$ is not fixed a priori. It is part of an
admissible policy. The filtration $\{\mathcal{F}_t\}$ is assumed to
be right continuous and $\mathcal{F}_0$ is assumed to contain every
$P_{ x}$-negligible set.
\end{remark}

In what follows we shall prove that the control policy
$\mathcal{S}^*$ is optimal. Firstly, it can be seen that the
reflected SDE in (\ref{omodel})  has a unique  solution for each
$(s,x)\in D\tilde{a}\tilde{b}$, see, e.g., Theorem 2.6 in
\cite{Burdzy09}. And obviously the controlled reflected diffusion
$X_t$ is $\{\mathcal{F}_t\}$ measurable.

If  the control $(A_t^{(1)},A_t^{(2)})$ involves  possible jumps at
time $t$, we use
\begin{displaymath}
\Delta A_t^{(i)}:=A_t^{(i)}-A_{t-}^{(i)},\ \ t\in[0,T),\ i=1,2,
\end{displaymath} to denote the jumps in the control, and use
$A_t^{(i),c} (i=1,2)$ to denote the continuous part of the processes
$A_t^{(i)},i=1,2$.. Since $A_t^{(1)},A_t^{(2)}$ are the minimal
decomposition of a bounded variation process into a difference of
two nondecreasing processes, we have $\Delta A_t^{(1)}\Delta
A_t^{(2)}=0,\forall t\in[0,T)$. In a similar manner we define
\begin{displaymath}
\Delta X_t:=X_t-X_{t-}, \ \ \Delta W(t,X_t):=W(t,X_t)-W(t-,X_{t-}),\
\ \forall t\in[0,T).
\end{displaymath}

We assume that the definition of $W(s,x)$ in (\ref{wdef}) still
holds for $(s,x)\notin D\tilde{a}\tilde{b}$ by noticing that
$V(s,x)=-f_1(s,x),x<\tilde{a}(s)$ and
$V(s,x)=f_2(s,x),x>\tilde{b}(s)$. Then we have the following
theorem.
\begin{theorem}\label{veri}
Assume Assumptions \ref{basic1} -- \ref{monoF2} and that
$\tilde{a}(s),\tilde{b}(s)$ are $C^1$ functions of $s\in[0,T]$. Let
$k_{\mathcal S}(z)=k_{\mathcal S}(s,x)$ be given by the following
\begin{eqnarray}\label{kcostjump} k_{\mathcal{S}}(z)=&&E_{(s,x)}\left(\int_s^T H(t, X_t)dt+G(X_T)\right)\\
&&+E_{(s,x)}\left(\int_s^T e^{-ct}\left(f_1(t,
X_t)dA_t^{(1),c}+f_2(t, X_t)dA_t^{(2),c}\right)\right)\nonumber\\
&&+E_{(s,x)}\left(\sum_{s\leqslant t\leqslant
T}e^{-ct}\left(\int_{X_{nt^-}}^{X_{nt^-}+\Delta A_t^{(1)}} f_1(t,
y)dy\right.\right.\nonumber\\
&&\left.\left.+\int_{X_{nt^-}-\Delta A_t^{(2)}}^{X_{nt^-}} f_2(t,
y)dy\right)\right),\nonumber
\end{eqnarray}
then
\begin{enumerate}

\item For any admissible policy $\mathcal{S}$, $W(s, x)\leqslant k_{\mathcal
S}(s, x),\ \forall (s,x)\in [0,T]\times\mathbb{R}$.

\item $W(s, x)= k_{{\mathcal S}^*}(s,
x),\ \forall (s, x)\in[0,T]\times\mathbb{R}$.
\end{enumerate}
\end{theorem}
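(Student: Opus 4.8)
The plan is to prove this verification theorem via an application of the generalized (jump-adapted) Itô formula to the process $W(t,X_t)$, combined with the HJB inequality established in Theorem \ref{wmain}. The key structural fact is that $W$ is $C^{1,2}$ on $[0,T)\times\mathbf{U}$ with the reflecting boundaries being exactly $\tilde a,\tilde b$, so the sign of $\mathcal{L}^{\mathrm{sing}}W+H$ (where $\mathcal{L}^{\mathrm{sing}}=\tfrac12\sigma^2\partial_{xx}+\mu\partial_x+\partial_s$) is known everywhere: it vanishes on $D\tilde a\tilde b$ by (\ref{weqn}) and is strictly positive outside by (\ref{wineq}).

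\textbf{First} I would apply Itô's formula to $W(t,X_t)$ between $s$ and $T$ for an arbitrary admissible control $\mathcal{S}=(A_t^{(1)},A_t^{(2)})$. Since $X_t$ may have jumps and has a continuous bounded-variation part, the formula splits into three pieces: the second-order/drift term $\int_s^T(\mathcal{L}^{\mathrm{sing}}W)(t,X_t)\,dt$, the martingale term $\int_s^T\sigma(X_t)W_x(t,X_t)\,dB_t$, the continuous control term $\int_s^T W_x(t,X_t)\,(dA_t^{(1),c}-dA_t^{(2),c})$, and a sum over jumps $\sum_{s\le t\le T}\Delta W(t,X_t)$. Rearranging and taking expectation (the martingale term vanishes since $\sigma W_x$ is bounded on the compact reflection region), I obtain
\begin{eqnarray*}
W(s,x)&=&E_{(s,x)}\Big(\int_s^T\!\!-(\mathcal{L}^{\mathrm{sing}}W)(t,X_t)\,dt+W(T,X_T)\\
&&\quad-\int_s^T W_x(t,X_t)(dA_t^{(1),c}-dA_t^{(2),c})-\sum_{s\le t\le T}\Delta W(t,X_t)\Big).
\end{eqnarray*}

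\textbf{Second}, I would substitute the ingredients. On the continuous part I replace $-\mathcal{L}^{\mathrm{sing}}W$ by $H$ plus a nonpositive error $\big(H-(-(\mathcal{L}^{\mathrm{sing}}W+H)+H)\big)$; more precisely $-(\mathcal{L}^{\mathrm{sing}}W)=H-(\mathcal{L}^{\mathrm{sing}}W+H)$, where the last bracket is $\ge 0$ by (\ref{weqn})--(\ref{wineq}), so $-\mathcal{L}^{\mathrm{sing}}W\le H$ with equality precisely on $D\tilde a\tilde b$. For the continuous control term I invoke (\ref{bcond3}): $-W_x\,dA^{(1),c}=e^{-cs}f_1\,dA^{(1),c}$ where $dA^{(1),c}$ charges only $\{X_t=\tilde a(t)\}$, and similarly $W_x\,dA^{(2),c}=e^{-cs}f_2\,dA^{(2),c}$ on $\{X_t=\tilde b(t)\}$, while on $(\tilde a,\tilde b)$ the control does not act. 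For the jump term I use the fundamental theorem of calculus: $\Delta W(t,X_t)=\int_{X_{t^-}}^{X_t}W_x(t,y)\,dy$, and since a jump of $A^{(1)}$ moves $X$ upward from $X_{t^-}$ to $X_{t^-}+\Delta A_t^{(1)}$ through the region where $W_x\ge -e^{-ct}f_1$ (with equality on $D\tilde a$), and a jump of $A^{(2)}$ moves it downward where $W_x\le e^{-ct}f_2$, I get $-\Delta W(t,X_t)\le e^{-ct}\big(\int_{X_{t^-}}^{X_{t^-}+\Delta A_t^{(1)}}f_1\,dy+\int_{X_{t^-}-\Delta A_t^{(2)}}^{X_{t^-}}f_2\,dy\big)$. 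Combining these with $W(T,X_T)=G(X_T)$ gives exactly $W(s,x)\le k_{\mathcal{S}}(s,x)$, proving part (1).

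\textbf{Third}, for part (2) I specialize to $\mathcal{S}^*$, the minimal (Skorokhod) reflection keeping $X_t$ inside $\overline{D\tilde a\tilde b}$. Under $\mathcal{S}^*$ every inequality above becomes an equality: the process stays in $[\tilde a(t),\tilde b(t)]$ so $-\mathcal{L}^{\mathrm{sing}}W=H$ along the paths (a.e. $t$), the continuous reflection acts only on the boundaries where (\ref{bcond3}) gives equality, and the minimal reflection produces no jumps in the interior (and any boundary jump, if present at maturity, is handled by the same FTC identity with equality since $W_x$ equals $\mp e^{-ct}f_i$ on $D\tilde a, D\tilde b$). The main obstacle I anticipate is the careful bookkeeping of the jump/boundary terms in the generalized Itô formula — in particular justifying that the minimal decomposition guarantees $\Delta A^{(1)}\Delta A^{(2)}=0$, that the continuous singular terms charge only the respective boundaries, and that the integrability needed to drop the stochastic-integral expectation holds (using boundedness of $\sigma W_x$ on the compact reflection domain and the bounded-variation assumption on the controls). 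Establishing that $\mathcal{S}^*$ is genuinely admissible in the sense of the stated definition — existence and uniqueness of the reflected SDE — is supplied by the cited Theorem 2.6 in \cite{Burdzy09}, so I would only need to verify the path stays in the compact region with continuous boundary, which follows from the $C^1$ regularity of $\tilde a,\tilde b$.
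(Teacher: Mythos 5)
Your proposal is correct and follows essentially the same route as the paper: the generalized It\^o formula applied to $W(t,X_t)$, the sign of $\tfrac12\sigma^2 W_{xx}+\mu W_x+W_s+H$ from Theorem \ref{wmain}, the gradient constraints for the continuous control terms, and the identity $\Delta W(t,X_t)=\int_{X_{t^-}}^{X_t}W_x(t,y)\,dy$ matched against the jump costs. The only caveats are that for a \emph{general} admissible control the continuous control term must be handled with the two-sided inequality $-e^{-ct}f_1\leqslant W_x\leqslant e^{-ct}f_2$ valid everywhere (not the boundary equalities of (\ref{bcond3}), which give equality only for $\mathcal{S}^*$), and that the paper reaches maturity by working on $[s,\tau]$ with $\tau<T$ and letting $\tau\to T$ so as to isolate the possible control jump at $T$.
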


\begin{proof}
Applying the generalized Ito's formula to $W(s,x)$ yields, for any
stopping time $\tau\in[s,T)$ and admissible control $\mathcal{S}$,
\begin{eqnarray}\label{wito}
W(\tau,X_\tau)&=& W(s,x)+\int_s^\tau\left(\frac{\partial W}{\partial t}(t,X_t)+\mu(X_t)\frac{\partial W}{\partial x}(t,X_t)+\frac{1}{2}\sigma^2(X_t)\frac{\partial^2 W}{\partial x^2}(t,X_t)\right)dt \nonumber\\
&&+ \int_s^\tau \frac{\partial W}{\partial
x}(t,X_t)\sigma(X_t)dB_t\nonumber\\
&&+\int_s^\tau\frac{\partial W}{\partial
x}(t,X_t)(dA_t^{(1),c}-dA_t^{(2),c})+\sum_{s\leqslant t< \tau}\Delta
W(t,X_t) .
\end{eqnarray}
by taking expectation of both sides of (\ref{wito}) we get
\begin{eqnarray}\label{wdynkin}
W(s,x)&=&E_{(s,x)}W(\tau,X_\tau)\nonumber\\
&&-E_{(s,x)}\left(\int_s^\tau\left(\frac{\partial W}{\partial
t}(t,X_t)+\mu(X_t)\frac{\partial W}{\partial
x}(t,X_t)+\frac{1}{2}\sigma^2(X_t)\frac{\partial^2 W}{\partial
x^2}(t,X_t)\right)dt\right)\nonumber\\
&&-E_{(s,x)}\int_s^\tau\frac{\partial W}{\partial
x}(t,X_t)(dA_t^{(1),c}-dA_t^{(2),c})-E_{(s,x)}\sum_{s\leqslant t<
\tau}\Delta W(t,X_t).\nonumber
\end{eqnarray}
We can rewrite $k_{\mathcal{S}}(s,x)$ as
\begin{eqnarray} k_{\mathcal{S}}(s,x)=&&E_{(s,x)}\left(\int_s^\tau H(t, X_t)dt+k_{\mathcal{S}}(\tau,X_\tau)\right)\\
&&+E_{(s,x)}\left(\int_s^\tau e^{-ct}\left(f_1(t,
X_t)dA_t^{(1),c}+f_2(t, X_t)dA_t^{(2),c}\right)\right)\nonumber\\
&&+E_{(s,x)}\left(\sum_{s\leqslant
t<\tau}e^{-ct}\left(\int_{X_{nt^-}}^{X_{nt^-}+\Delta A_t^{(1)}}
f_1(t,
y)dy\right.\right.\nonumber\\
&&\left.\left.+\int_{X_{nt^-}-\Delta A_t^{(2)}}^{X_{nt^-}} f_2(t,
y)dy\right)\right),\ \ \tau\in[s,T).\nonumber
\end{eqnarray}
Therefore,
\begin{eqnarray}\label{kwdiff}
&&k_{\mathcal{S}}(s,x)-W(s,x)\nonumber\\
&&=E_{(s,x)}\left(k_{\mathcal{S}}(\tau,X_\tau)-W(\tau,X_\tau)\right)\nonumber\\
&& +E_{(s,x)}\int_s^\tau\left( H(t,X_t)+\frac{\partial W}{\partial
t}(t,X_t)+\mu(X_t)\frac{\partial W}{\partial
x}(t,X_t)+\frac{1}{2}\sigma^2(X_t)\frac{\partial^2 W}{\partial
x^2}(t,X_t)\right)dt \nonumber\\
&&+ E_{(s,x)}\int_s^\tau \left(e^{-ct}f_1(t,X_t)+\frac{\partial W}{\partial x}(t,X_t)\right)dA_t^{(1),c}\nonumber\\
&&+E_{(s,x)}\int_s^\tau\left(e^{-ct}f_2(t,X_t)-\frac{\partial
W}{\partial
x}(t,X_t)\right)dA_t^{(2),c}\nonumber\\
&&+E_{(s,x)}\sum_{s\leqslant t< \tau}\Delta
W(t,X_t)\nonumber\\
&&+ E_{(s,x)}\left(\sum_{s\leqslant
t<\tau}\left(\int_{X_{nt^-}}^{X_{nt^-}+\Delta A_t^{(1)}}
e^{-ct}f_1(t, y)dy+\int_{X_{nt^-}-\Delta A_t^{(2)}}^{X_{nt^-}}
e^{-ct}f_2(t, y)dy\right)\right).
\end{eqnarray} By Theorem \ref{wmain}, the second, third and fourth
expectations in (\ref{kwdiff}) are all nonnegative, and this holds
true as $\tau\to T$.

Define the sets
\begin{displaymath}\Gamma_+=\{t\in[s,T]: \Delta
A_t^{(1)}>0\},\quad \Gamma_-=\{t\in[s,T]: \Delta A_t^{(2)}>0\},
\end{displaymath} then $\Gamma_+\cap\Gamma_-=\phi$. If we send $\tau\to T$, $k_{\mathcal{S}}(T-,X_{T-})$ can be written as $G(X_T)$ plus the possible jump of control at $T$.
Thus by sending $\tau\to T$ and the fact that
$k_{\mathcal{S}}(T,X_T)=G(X_T)=W(T,X_T)$ (after a possible jump at
$T$), we can rewrite the remaining parts in (\ref{kwdiff}) as
\begin{eqnarray}\label{rewrKW}
&&E_{(s,x)}\left(G(X_T)-W(T,X_T)\right)+E_{(s,x)}\sum_{s\leqslant
t\leqslant T}\Delta W(t,X_t) \nonumber\\
&&+ E_{(s,x)}\left(\sum_{s\leqslant t\leqslant
T}\left(\int_{X_{nt^-}}^{X_{nt^-}+\Delta A_t^{(1)}} e^{-ct}f_1(t,
y)dy+\int_{X_{nt^-}-\Delta A_t^{(2)}}^{X_{nt^-}}
e^{-ct}f_2(t, y)dy\right)\right)\nonumber\\
&&=E_{(s,x)}\left[\sum_{t\in\Gamma_+}\int_{X_{nt^-}}^{X_{nt^-}+\Delta
A_t^{(1)}}\left(\frac{\partial W}{\partial x}(t,y)+ e^{-ct}f_1(t,
y)\right)dy\right]\nonumber\\
&&+E_{(s,x)}\left[\sum_{t\in\Gamma_-}\int_{X_{nt^-}-\Delta
A_t^{(2)}}^{X_{nt^-}} \left(-\frac{\partial W}{\partial
x}(t,y)+e^{-ct}f_2(t, y)\right)dy\right].
\end{eqnarray} Once again by Theorem \ref{wmain}, these two
integrals are nonnegative, hence $k_{\mathcal{S}}(s,x)\geqslant
W(s,x)$.

If ${\mathcal{S}}=\mathcal{S}^*$ except a possible jump at time $s$
and the controlled process is the reflected diffusion in
$D\tilde{a}\tilde{b}$, then the second expectation in (\ref{kwdiff})
is zero. Since $A_t^{(1)}$ only increases when $X_t$ hits
$\tilde{a}$, and $A_t^{(2)}$ only increases when $X_t$ hits
$\tilde{b}$, by (\ref{bcond3}) the third and fourth expectations in
(\ref{kwdiff}) are both zeros. The remaining parts in (\ref{kwdiff})
is expressed in (\ref{rewrKW}) which indicates that under
$\mathcal{S}^*$ it is equal to zero. Therefore $W(s, x)=
k_{{\mathcal S}^*}(s, x),\ \forall (s, x)\in[0,T]\times\mathbb{R}$.
\end{proof}
The proof of Theorem \ref{veri} also implies that the optimal
admissible control is unique.
\begin{remark}
The term ``a possible jump at time $s$'' means if the initial state
of the process is outside of the region $D\tilde{a}\tilde{b}$, apply
a control $\Delta A_s^{(1)}$ or $\Delta A_s^{(2)}$ to immediately
bring it into $D\tilde{a}\tilde{b}$.
\end{remark}

\section{Optimal Control with A More General Terminal Cost}\label{Opt_Jump}
Consider again the cost function of the singular control problem
\begin{eqnarray}\label{cost_ag}
k_{\mathcal{S}}(z)=k_{\mathcal{S}}(s,x)&=&E_{(s,x)}\left\{\int_s^T H(t,X_t)dt\right.\nonumber\\
&&+\left.\int_s^T e^{-ct}f_1(t,X_t)dA_t^{(1)}+\int_s^T
e^{-ct}f_2(t,X_t)dA_t^{(2)}+G(X_T)\right\},
\end{eqnarray} where $G'(x)=e^{-cT}g(x)$. In Assumption \ref{basic1} we
have the condition
\begin{equation}\label{reg_G}-f_1(T,x)\leqslant g(x)\leqslant f_2(T,x),\ \forall x.\end{equation}
In this section, we shall consider a more general terminal cost
$G(x)$ such that (\ref{reg_G}) does not necessarily hold. We shall
see that the optimal control involves a jump at the terminal $T$. In
this section the functions $f_1,f_2,g,h$ are still assumed to be
bounded and continuous with $f_2>0>-f_1$, and we further assume
Assumptions \ref{assumab} and \ref{tildeAB}.

The following is a relaxed condition on $g(x)$ which provides a more
general terminal cost function $G(x)=\int_0^x e^{-cT}g(y)dy$.
\begin{assumption}\label{ABmod}
There exist two points $A,B$  such that $A<0< B$, $g(x)$ is in
$H^1((A,B))$. Furthermore, $\forall x< A$, $-f_1(T,x)>g(x)$,
$\forall x> B$, $f_2(T,x)<g(x)$, and $\forall x\in[A,B]$,
$-f_1(T,x)\leqslant g(x)\leqslant f_2(T,x)$.
\end{assumption} Comparing this assumption with Assumption \ref{AB},
it can be seen that we no longer require the conditions $A\leqslant
a(T)$ or $B\geqslant b(T)$.

Define $\tilde{G}(x)$ as
\begin{equation}\label{G-tilde}
\tilde{G}(x)=\min_{y_1\geqslant 0,y_2\geqslant
0}\left[G(x+y_1)+\int_x^{x+y_1}e^{-cT}f_1(T,u)du,\
G(x-y_2)+\int_{x-y_2}^x e^{-cT}f_2(T,u)du\right],
\end{equation} then we have

\begin{proposition}
For any $x$ with $g(x)<-f_1(T,x)$, $\tilde{G}(x)<G(x)$ and
$\tilde{g}(x):=e^{cT}\tilde{G}'(x)=-f_1(T,x)$, and for any $x$ with
$g(x)>f_2(T,x)$, $\tilde{G}(x)<G(x)$ and $\tilde{g}(x)=f_2(T,x)$.
\end{proposition}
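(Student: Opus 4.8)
The plan is to split the minimization defining $\tilde{G}$ into its two competing branches and treat each separately. Write $\tilde{G}(x)=\min(\Phi_1(x),\Phi_2(x))$, where $\Phi_1(x)=\min_{y_1\geqslant 0}\left[G(x+y_1)+e^{-cT}\int_x^{x+y_1}f_1(T,u)\,du\right]$ and $\Phi_2(x)=\min_{y_2\geqslant 0}\left[G(x-y_2)+e^{-cT}\int_{x-y_2}^{x}f_2(T,u)\,du\right]$. Choosing $y_1=0$ (resp. $y_2=0$) shows at once that $\tilde{G}(x)\leqslant G(x)$ for every $x$; the substance of the proposition is to locate the inner minimizer, obtain a closed form, and differentiate. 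By symmetry (interchanging the roles of $f_1$ and $f_2$ and of upward/downward shifts) it suffices to treat the first claim, i.e. the set $\{g(\cdot)<-f_1(T,\cdot)\}$, which by Assumption \ref{ABmod} is exactly the half-line $x<A$.

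Fix $x<A$ and analyze $\Phi_1$. Using $G'(u)=e^{-cT}g(u)$, the derivative of the bracket in $\Phi_1$ with respect to $y_1$ equals $e^{-cT}\left[g(x+y_1)+f_1(T,x+y_1)\right]$. The key input is the sign dichotomy supplied by Assumption \ref{ABmod}: there $g+f_1(T,\cdot)<0$ on $(-\infty,A)$, while combining $g\geqslant -f_1(T,\cdot)$ on $[A,B]$ with $g>f_2(T,\cdot)>0>-f_1(T,\cdot)$ on $(B,\infty)$ gives $g+f_1(T,\cdot)\geqslant 0$ on $[A,\infty)$. Consequently the bracket strictly decreases on $y_1\in[0,A-x)$ and is nondecreasing afterward, so the minimizing shift carries the argument exactly to $A$, namely $y_1^{*}=A-x$. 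This produces the explicit closed form $\Phi_1(x)=G(A)+e^{-cT}\int_x^{A}f_1(T,u)\,du$, which is manifestly $C^1$ in $x$ with $\Phi_1'(x)=-e^{-cT}f_1(T,x)$, hence $e^{cT}\Phi_1'(x)=-f_1(T,x)$.

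It then remains to confirm that this branch wins and that the inequality is strict. For $x<A$ the $y_2$-derivative of the second bracket equals $e^{-cT}\left[f_2(T,x-y_2)-g(x-y_2)\right]$, which is positive since $g<-f_1(T,\cdot)<0<f_2(T,\cdot)$ throughout $(-\infty,A)$; thus its minimizer is $y_2=0$ and $\Phi_2(x)=G(x)$. Comparing the two branches, $\Phi_1(x)-G(x)=e^{-cT}\int_x^{A}\left[g(u)+f_1(T,u)\right]du<0$, the strict inequality holding because the integrand is negative on $[x,A)$. Therefore $\tilde{G}(x)=\Phi_1(x)<G(x)$ and $\tilde{g}(x)=e^{cT}\tilde{G}'(x)=-f_1(T,x)$, as claimed, and the case $x>B$ follows by the symmetric argument with $\Phi_2$ playing the active role.

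The \emph{main obstacle} is justifying that the inner minimizer is the clean boundary point $y_1^{*}=A-x$ rather than some interior critical point whose location varies with $x$; this is precisely what renders $\tilde{G}$ explicitly differentiable, and it rests entirely on the one-sided sign structure of $g+f_1(T,\cdot)$ from Assumption \ref{ABmod}, which I would state and invoke carefully. A minor secondary point is the possibility that $g+f_1(T,\cdot)$ vanishes on a subinterval of $[A,\infty)$: even then the minimizing value is still attained at $y_1^{*}=A-x$, so the closed form for $\Phi_1$ and its derivative are unaffected.
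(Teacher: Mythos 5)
Your proof is correct, and it takes a more explicit route than the paper's. The paper argues locally: from $g(x)<-f_1(T,x)$ and continuity it picks a small interval $[x,x+\Delta x]$ on which $g<-f_1(T,\cdot)$, deduces $\tilde{G}(x)\leqslant G(x+\Delta x)+\int_x^{x+\Delta x}e^{-cT}f_1(T,y)\,dy<G(x)$, and then invokes the incremental (dynamic-programming-type) identity $\tilde{G}(x)=\int_x^{x+\Delta x}e^{-cT}f_1(T,y)\,dy+\tilde{G}(x+\Delta x)$ to read off $\tilde{G}'(x)=-e^{-cT}f_1(T,x)$. You instead identify the inner minimizer globally: using the sign dichotomy $g+f_1(T,\cdot)<0$ on $(-\infty,A)$ and $\geqslant 0$ on $[A,\infty)$ (which does follow from Assumption \ref{ABmod} together with $f_2>0>-f_1$), you pin the optimal shift at $y_1^{*}=A-x$, obtain the closed form $\tilde{G}(x)=G(A)+e^{-cT}\int_x^{A}f_1(T,u)\,du$ on $(-\infty,A)$, and verify the competing $\Phi_2$ branch is inactive there. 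What your version buys is that it actually justifies the step the paper leaves implicit: the identity $\tilde{G}(x)=\int_x^{x+\Delta x}e^{-cT}f_1(T,y)\,dy+\tilde{G}(x+\Delta x)$ is precisely the statement that the minimizer from $x$ passes through $x+\Delta x$ and that the downward branch is not optimal, and both facts require the global sign analysis you carry out. The paper's version is shorter and localizes the hypothesis (it only uses the sign of $g+f_1(T,\cdot)$ near $x$, not the full structure of Assumption \ref{ABmod}), but as written its key identity is asserted rather than proved. Your closing remark about the minimizer possibly being non-unique when $g+f_1(T,\cdot)$ vanishes on a subinterval is a worthwhile precaution and does not affect the value or the derivative.
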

\begin{proof}
Firstly it is obvious that $\tilde{G}(x)\leqslant G(x),\ \forall x$.
If $g(x)<-f_1(T,x)$, then by continuity we can find an interval
$[x,x+\Delta x]$ such that for any $y\in[x,x+\Delta x]$,
$g(y)<-f_1(T,y)$. Therefore, $G(x+\Delta x)=G(x)+\int_x^{x+\Delta
x}e^{-cT}g(y)dy<G(x)-\int_x^{x+\Delta x}e^{-cT}f_1(T,y)dy$, which
implies that $\tilde{G}(x)\leqslant G(x+\Delta x)+\int_x^{x+\Delta
x}e^{-cT}f_1(T,y)dy<G(x)$.

Now since $\tilde{G}(x)=\int_x^{x+\Delta
x}e^{-cT}f_1(T,y)dy+\tilde{G}(x+\Delta x)$ for small $\Delta x$, it
can be easily derived that $\tilde{G}'(x)=-e^{-cT}f_1(T,x)$.

The rest of the proposition can be proved in a similar way.
\end{proof}

\begin{proposition}
$\tilde{g}(x)$ is continuous.
\end{proposition}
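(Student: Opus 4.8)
The plan is to show that $\tilde{g}(x) := e^{cT}\tilde{G}'(x)$ is continuous by partitioning the real line into three regions according to how $g$ compares with the two barriers $-f_1(T,\cdot)$ and $f_2(T,\cdot)$, establishing continuity on each open region and then checking that the one-sided limits match at the interface points. By the previous proposition, on the set where $g(x) < -f_1(T,x)$ we have $\tilde{g}(x) = -f_1(T,x)$, and on the set where $g(x) > f_2(T,x)$ we have $\tilde{g}(x) = f_2(T,x)$; on both of these regions $\tilde{g}$ inherits continuity directly from the assumed continuity of $f_1,f_2$. The remaining region is where $-f_1(T,x) \leqslant g(x) \leqslant f_2(T,x)$, and here I expect to argue that the minimization in (\ref{G-tilde}) is achieved at $y_1 = y_2 = 0$, so that $\tilde{G}(x) = G(x)$ and hence $\tilde{g}(x) = g(x)$, which is continuous by the standing assumption on $g$.

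First I would verify that on the middle region the two competing expressions in (\ref{G-tilde}) are both minimized at zero displacement. For the first expression, $\frac{d}{dy_1}\left[G(x+y_1) + \int_x^{x+y_1} e^{-cT} f_1(T,u)\,du\right] = e^{-cT}(g(x+y_1) + f_1(T,x+y_1))$, which is nonnegative precisely when $g \geqslant -f_1(T,\cdot)$; a symmetric computation shows the second expression is nondecreasing in $y_2$ when $g \leqslant f_2(T,\cdot)$. So in the region $-f_1(T,x) \leqslant g(x) \leqslant f_2(T,x)$ neither a positive $y_1$ nor a positive $y_2$ can strictly decrease the objective, giving $\tilde{G}(x) = G(x)$ and $\tilde{g}(x) = g(x)$.

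The crux is matching the one-sided limits at the boundary points separating these regions, say at a point $x_0$ where $g(x_0) = -f_1(T,x_0)$ (the $f_2$ side is symmetric). Approaching $x_0$ from the region where $g < -f_1(T,\cdot)$ gives $\tilde{g}(x) = -f_1(T,x) \to -f_1(T,x_0)$, while approaching from the middle region gives $\tilde{g}(x) = g(x) \to g(x_0)$; these agree exactly because $g(x_0) = -f_1(T,x_0)$ at the transition. Thus the continuity of $g$, $f_1$, and $f_2$, together with the fact that the region boundaries are defined by equalities among these continuous functions, forces the one-sided limits of $\tilde{g}$ to coincide.

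The main obstacle I anticipate is handling transition points that are not isolated — the sets $\{g = -f_1(T,\cdot)\}$ and $\{g = f_2(T,\cdot)\}$ need not consist of finitely many crossing points, and the three regions may interleave in a complicated way. Rather than enumerating intervals, the cleaner argument is to show directly that $\tilde{g}$ is a \emph{pointwise minimum-type envelope} that can be written uniformly as $\tilde{g}(x) = \max\{-f_1(T,x),\, \min\{g(x), f_2(T,x)\}\}$ (the truncation of $g$ between the two barriers), and then invoke the standard fact that the pointwise max/min of continuous functions is continuous. Establishing that this closed-form truncation formula genuinely equals $e^{cT}\tilde{G}'$ everywhere — including justifying differentiability of $\tilde{G}$ and the interchange needed to pass from the integral definition to the derivative at the transition points — is where the real care is required.
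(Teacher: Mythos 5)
Your argument is correct and is essentially the paper's own proof: partition into the three regions, show the minimum in (\ref{G-tilde}) is attained at $y_1=y_2=0$ on the middle region so that $\tilde{G}=G$ and $\tilde{g}=g$ there, and match the one-sided limits at the crossing points where $g=-f_1(T,\cdot)$ or $g=f_2(T,\cdot)$. The interleaving you anticipate as the main obstacle cannot actually occur here, because Assumption \ref{ABmod} stipulates that the three regions are exactly $(-\infty,A)$, $[A,B]$ and $(B,\infty)$, so the only transition points are $A$ and $B$; your closed-form truncation $\tilde{g}(x)=\max\{-f_1(T,x),\min\{g(x),f_2(T,x)\}\}$ is a valid (and tidy) reformulation, but not needed.
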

\begin{proof} Let $I=(-\infty,A)$ and $II=(B,\infty)$, then certainly $g(A)=-f_1(T,A)$, $g(B)=f_2(T,B)$
and $A<B$ by Assumption \ref{ABmod}. We have shown that
$\tilde{G}(x)<G(x)$ and $\tilde{g}(x)=-f_1(T,x)$, $\forall x\in I$,
and $\tilde{G}(x)<G(x)$, $\tilde{g}(x)=f_2(T,x)$, $\forall x\in II$.
Since for any $x\geqslant A$, $g(x)\geqslant -f_1(T,x)$, then it is
easy to see that $\min_{y_1\geqslant
0}[G(x+y_1)+\int_x^{x+y_1}e^{-cT}f_1(T,u)du]=G(x)$, i.e., $y_1=0$.
Similarly for any $x\leqslant B$, $g(x)\leqslant f_2(T,x)$, so
$\min_{y_2\geqslant
0}[G(x-y_2)+\int_{x-y_2}^{x}e^{-cT}f_2(T,u)du]=G(x)$, i.e., $y_2=0$.
As a conclusion, on $[A,B]$ we have $\tilde{G}(x)=G(x)$ and
$\tilde{g}(x)=g(x)$, hence the continuity of $\tilde{g}(x)$.
\end{proof}

In fact we can tell that $\tilde{g}(x)$ satisfies Assumption
\ref{AB}, and in particular, \begin{displaymath}\tilde{g}(x)\in
H^1(\min\{a(T),A\},\max\{b(T),B\}).\end{displaymath}

Now we are ready to consider the newly modified zero-sum game
\begin{eqnarray}\label{jcostnew}
\tilde{J}_z(\tau,\sigma)=\tilde{J}_{(s,x)}(\tau,\sigma)&=&E_{(s,x)}\left\{\int_s^{\tau\wedge\sigma\wedge T}e^{ct}h(t,Y_{t})dt\right.\nonumber\\
&&+1_{(\sigma <\tau\wedge T)}(-f_1(\sigma,Y_{\sigma}))+1_{(\tau <\sigma\wedge T)}f_2(\tau,Y_{\tau})\nonumber\\
&&\left.+1_{(T \leqslant\tau\wedge
\sigma)}\tilde{g}(Y_{T})\right\},\ \ \tau\wedge\sigma\geqslant s,
\end{eqnarray} where  $Y_t$ follows
\begin{equation}\label{yprocnew}
dY_t=(\sigma(Y_t)\sigma'(Y_t)+\mu(Y_t))dt+\sigma(Y_t)dB_t, \ \
Y_s=x.
\end{equation}
The value of this game is thus given by
\begin{equation}\label{vsupinf}
V(z)=V(s,x)=\inf_\tau\sup_\sigma
\tilde{J}_z(\tau,\sigma)=\sup_\sigma \inf_\tau
\tilde{J}_z(\tau,\sigma),\ \ Y_s=x.
\end{equation}

The following is a direct result from Section \ref{tiDynkin}. Notice
that the two free boundary curves $\tilde{a},\tilde{b}$ satisfy
$\tilde{a}(T)=\min\{a(T),A\}$ and $\tilde{b}(T)=\max\{b(T),B\}$.

\begin{theorem}\label{Vregnew} Assume Assumptions \ref{assumab},
\ref{tildeAB}, \ref{monoF1} \ref{monoF2} and \ref{ABmod}. If
$\tilde{a}(t),\tilde{b}(t)$ are $C^1$ functions of $t$, then the
function $V$ in (\ref{vsupinf}) is bounded and continuous and is the
unique weak solution of the following problem:
\begin{eqnarray}
&&\mathcal{L}V(t,y)+e^{ct}h(t,y)=0,\ \ \ \ \ \ \ \ (t,y)\in D\tilde{a}\tilde{b},  \label{Vpdenew}\\
&&\mathcal{L}V(t,y)+e^{ct}h(t,y)<0,\ \ \ \ \ \ \ \ (t,y)\in D\tilde{a},\nonumber\\
&&\mathcal{L}V(t,y)+e^{ct}h(t,y)>0,\ \ \ \ \ \ \ \ (t,y)\in D\tilde{b},\nonumber\\
&&-f_1(t,y)<V(t,y)<f_2(t,y), \ \  (t,y)\in D\tilde{a}\tilde{b},\nonumber\\
&&V(t,y)=-f_1(t,y),\ \ \ \ (t,y)\in D\tilde{a},\nonumber\\
&&V(t,y)=f_2(t,y),\ \ \ \ \ (t,y)\in D\tilde{b},\nonumber\\
&&V(T,y)=\tilde{g}(y).\nonumber
\end{eqnarray}
By considering $V(t,y)$ as a mapping $V:[0,T]\to H^2({\bf U})$,
where ${\bf U}$ is the open interval $(\tilde{A},\tilde{B})$, then
$V\in L^2(0,T:H^2({\bf U}))\cap L^\infty(0,T;H^1({\bf U}))$,
$\frac{d V}{d t}\in L^2(0,T;L^2({\bf U}))$.

Furthermore, $V(t,y)$ is in $C^{0,1}([0,T)\times {\bf U})\cap
C([0,T]\times{\bf U})$.
\end{theorem}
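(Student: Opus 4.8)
The plan is to observe that the modified game defined by (\ref{jcostnew})--(\ref{vsupinf}) is structurally identical to the game (\ref{jcost}) analyzed throughout Section \ref{tiDynkin}: the running dividend $\int e^{ct}h\,dt$ and the stopping payoffs $-f_1,f_2$ are unchanged, and only the terminal datum $g$ has been replaced by $\tilde{g}$. Consequently every proposition and theorem of Section \ref{tiDynkin} applies verbatim to the modified game, \emph{provided} $\tilde{g}$ satisfies the same terminal hypotheses that $g$ was required to satisfy there, namely the terminal inequalities of Assumption \ref{basic1} together with Assumption \ref{AB} for suitable endpoints. Once this is checked, Theorem \ref{Vcontinuous}, Theorem \ref{Vreg} and Corollary \ref{bcond1} yield, in order, the continuity and saddle-point structure, the weak-solution characterization with terminal condition $V(T,y)=\tilde{g}(y)$ together with the $L^2(0,T;H^2({\bf U}))\cap L^\infty(0,T;H^1({\bf U}))$ regularity with $dV/dt\in L^2(0,T;L^2({\bf U}))$, and finally the $C^{0,1}([0,T)\times{\bf U})\cap C([0,T]\times{\bf U})$ regularity. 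So the proof reduces to verifying the hypotheses for $\tilde{g}$.

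First I would verify the terminal part of Assumption \ref{basic1}. The two preceding propositions show that $\tilde{g}$ is continuous, and it is bounded since $f_1,f_2,g$ are. The pointwise sandwich $-f_1(T,y)\leqslant\tilde{g}(y)\leqslant f_2(T,y)$ holds for all $y$: where $g(y)<-f_1(T,y)$ we have $\tilde{g}(y)=-f_1(T,y)$, where $g(y)>f_2(T,y)$ we have $\tilde{g}(y)=f_2(T,y)$, and on $[A,B]$ we have $\tilde{g}=g$, which lies in the required band by Assumption \ref{ABmod}.

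Next I would verify Assumption \ref{AB} for $\tilde{g}$ with the enlarged endpoints $A':=\min\{a(T),A\}$ and $B':=\max\{b(T),B\}$, so that $A'\leqslant a(T)<0<b(T)\leqslant B'$ as the original assumption demands. For $y\leqslant A'\leqslant A$ we have $g(y)\leqslant -f_1(T,y)$, hence $\tilde{g}(y)=-f_1(T,y)$; symmetrically $\tilde{g}(y)=f_2(T,y)$ for $y\geqslant B'$. For the membership $\tilde{g}\in H^1((A',B'))$, note that $\tilde{g}$ equals $g\in H^1((A,B))$ on $[A,B]$ and equals the smooth functions $-f_1(T,\cdot)$ and $f_2(T,\cdot)$ on $[A',A]$ and $[B,B']$ respectively; since $\tilde{g}$ is continuous across the junction points $A,B$, these $H^1$ pieces glue to a global $H^1$ function, which is exactly the remark recorded just before the theorem. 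The remaining hypotheses, Assumptions \ref{assumab}, \ref{tildeAB}, \ref{monoF1} and \ref{monoF2}, involve only $h,f_1,f_2$ and the diffusion $Y_t$ and not the terminal datum, so they transfer unchanged and are in any case assumed directly in the statement.

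The main point requiring care, and the only genuine obstacle, is the consistent bookkeeping at the terminal time. One must confirm that the free boundaries produced by Section \ref{tiDynkin} for the modified game close up exactly at $\tilde{a}(T)=A'$ and $\tilde{b}(T)=B'$, as recorded in the remark preceding the theorem, so that the $H^1$ terminal datum matches the lateral boundary values $-f_1,f_2$ at the corners, and that the integrability conditions of Assumption \ref{tildeAB} governing $\tau_{AB}$ are read with the enlarged exit segment $\{T\}\times[A',B']$ rather than $\{T\}\times[A,B]$. Granting this, the representation (\ref{VonBoundary}) holds with terminal data $\tilde{g}$, and all stated conclusions follow directly from Theorem \ref{Vreg} and Corollary \ref{bcond1}.
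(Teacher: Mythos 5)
Your proposal is correct and takes essentially the same route as the paper, which states this theorem as ``a direct result from Section \ref{tiDynkin}'' after noting that $\tilde{a}(T)=\min\{a(T),A\}$, $\tilde{b}(T)=\max\{b(T),B\}$ and that $\tilde{g}$ is continuous, sandwiched between $-f_1(T,\cdot)$ and $f_2(T,\cdot)$, and lies in $H^1(\min\{a(T),A\},\max\{b(T),B\})$. You simply make explicit the verification (of the terminal part of Assumption \ref{basic1} and of Assumption \ref{AB} for $\tilde{g}$ with the enlarged endpoints) that the paper leaves implicit.
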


For the related singular control problem, if we define
\begin{displaymath}\begin{aligned}
&W(s,x)=\int_0^x e^{-cs}V(s,y)dy, \\
&H(s,x)=\int_0^xh(s,y)dy + C(s), \\
&G(x)=\int_0^x e^{-cT}g(y)dy,
\end{aligned}
\end{displaymath} where
\begin{displaymath}
C(s)=-\frac{1}{2}\sigma^2(\tilde{a}(s))\frac{\partial^2 W}{\partial
x^2}(s,\tilde{a}(s))-\mu(\tilde{a}(s))\frac{\partial W}{\partial
x}(s,\tilde{a}(s))-\frac{\partial W}{\partial
s}(s,\tilde{a}(s))-\int_0^{\tilde{a}(s)}h(s,y)dy,
\end{displaymath} and define $\tilde{G}(x)$ as in (\ref{G-tilde}), then we immediately get the
following result from Section \ref{tiSingular}

\begin{theorem}\label{verinew}
Assume Assumptions \ref{assumab}, \ref{tildeAB}, \ref{monoF1}
\ref{monoF2} and \ref{ABmod}. If $\tilde{a}(s),\tilde{b}(s)$ are
$C^1$ functions of $s$, then there exists a $C^{1,2}([0,T)\times
{\bf U})\cap C^{0,1}([0,T]\times {\bf U})$ function $W(s,x)$ which
satisfies
\begin{eqnarray*}
\frac{1}{2}\sigma^2(x)\frac{\partial^2 W}{\partial
x^2}(s,x)+\mu(x)\frac{\partial W}{\partial x}(s,x)+\frac{\partial
W}{\partial s}(s,x)+H(s,x)=0,\ \
x\in(\tilde{a}(s),\tilde{b}(s)),&&\\
\frac{1}{2}\sigma^2(x)\frac{\partial^2 W}{\partial
x^2}(s,x)+\mu(x)\frac{\partial W}{\partial x}(s,x)+\frac{\partial
W}{\partial s}(s,x)+H(s,x)>0,\ \ x\in
[\tilde{A},\tilde{a}(s))\cup(\tilde{b}(s),\tilde{B}].&&\\
\frac{\partial W}{\partial x}(s,x)=-e^{-cs}f_1(s,x),\
x\leqslant\tilde{a}(s),\ \ \ \frac{\partial W}{\partial
x}(s,x)=e^{-cs}f_2(s,x),\ x\geqslant\tilde{b}(s), &&\\
-e^{-cs}f_1(s,x)<\frac{\partial W}{\partial
x}(s,x)<e^{-cs}f_2(s,x),\ \
x\in(\tilde{a}(s),\tilde{b}(s)),&&\\
W(T,x)=\tilde{G}(x),\ \ x\in {\bf U}.&&
\end{eqnarray*}

Let $k_{\mathcal S}(z)=k_{\mathcal S}(s,x)$ be given by
\begin{eqnarray}\label{kcostjumpnew} k_{\mathcal{S}}(z)=&&E_{(s,x)}\left(\int_s^T H(t, X_t)dt+G(X_T)\right)\\
&&+E_{(s,x)}\left(\int_s^T e^{-ct}\left(f_1(t,
X_t)dA_t^{(1),c}+f_2(t, X_t)dA_t^{(2),c}\right)\right)\nonumber\\
&&+E_{(s,x)}\left(\sum_{s\leqslant t\leqslant
T}e^{-ct}\left(\int_{X_{nt^-}}^{X_{nt^-}+\Delta A_t^{(1)}} f_1(t,
y)dy\right.\right.\nonumber\\
&&\left.\left.+\int_{X_{nt^-}-\Delta A_t^{(2)}}^{X_{nt^-}} f_2(t,
y)dy\right)\right),\nonumber
\end{eqnarray}
then
\begin{enumerate}

\item For any admissible policy $\mathcal{S}$, $W(s, x)\leqslant k_{\mathcal
S}(s, x),\ \forall (s,x)\in [0,T]\times\mathbb{R}$.

\item $W(s, x)= k_{{\mathcal S}^*}(s,
x),\ \forall (s, x)\in[0,T]\times\mathbb{R}$, where $\mathcal{S}^*$
is the control policy to reflect the process $X_t$ within the region
$D\tilde{a}\tilde{b}$.
\end{enumerate}
\end{theorem}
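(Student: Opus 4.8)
The plan is to reduce the entire statement to the machinery already built in Sections \ref{tiDynkin} and \ref{tiSingular}, with the only genuinely new ingredient being the terminal impulse encoded by $\tilde{G}$. For the first block of assertions (existence of $W$ and its PDE, inequality, and boundary characterization), I would observe that $\tilde{g}$ has already been shown to satisfy Assumption \ref{AB} and to lie in $H^1$. Hence the modified game (\ref{jcostnew}) with terminal payoff $\tilde{g}$ falls verbatim under the hypotheses of Theorem \ref{Vreg}, and its value $V$ enjoys all the regularity and weak-solution properties recorded in Theorem \ref{Vregnew}. Defining $W$ by the integration (\ref{wdef}) and repeating the computation (\ref{tVpde})--(\ref{wpde1}) word for word, I recover the $C^{1,2}([0,T)\times{\bf U})\cap C^{0,1}([0,T]\times{\bf U})$ regularity, the equation in the continuation region, the strict inequality outside $D\tilde{a}\tilde{b}$, the first-order boundary conditions, and the terminal identity $W(T,x)=\tilde{G}(x)$. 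This is exactly Theorem \ref{wmain} with $g$ replaced by $\tilde{g}$, so nothing new is required here.

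For the verification inequality $W\leqslant k_{\mathcal S}$, I would follow the proof of Theorem \ref{veri} line by line. Applying the generalized It\^o formula to $W(t,X_t)$ along an admissible $\mathcal{S}$ up to a stopping time $\tau<T$ as in (\ref{wito}), taking expectations, and rearranging as in (\ref{kwdiff}), I express $k_{\mathcal S}(s,x)-W(s,x)$ as the sum of three contributions already analyzed in Theorem \ref{veri}: the drift integral, whose integrand is the expression appearing in (\ref{weqn})--(\ref{wineq}) and is therefore nonnegative; the two continuous-reflection integrals, nonnegative by the gradient bounds (\ref{bcond3}) and the accompanying strict inequality; and the pre-terminal jump sum, nonnegative as in (\ref{rewrKW}). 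These estimates are identical to the earlier setting and need no modification.

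The one new point is the passage $\tau\to T^-$. In Theorem \ref{veri} one had $W(T,\cdot)=G(\cdot)$, so the terminal difference cancelled; here instead $W(T,\cdot)=\tilde{G}(\cdot)$. Using the continuity of $W$ up to $T$, the limiting terminal contribution to $k_{\mathcal S}-W$ equals the cost of the terminal impulse plus $G(X_T)$ minus $\tilde{G}(X_{T-})$. By the very definition (\ref{G-tilde}) of $\tilde{G}$ as the minimum, over all admissible terminal impulses, of $G(\cdot)$ plus the corresponding impulse cost, this quantity is nonnegative for \emph{every} terminal jump, which yields $W\leqslant k_{\mathcal S}$. For the policy $\mathcal{S}^{*}$ that reflects in $D\tilde{a}\tilde{b}$ on $[s,T)$ and then applies at $T$ the impulse attaining the minimum in (\ref{G-tilde}), the three interior contributions vanish exactly as in Theorem \ref{veri}, and the terminal contribution is zero by construction, giving $W=k_{\mathcal{S}^{*}}$.

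The hard part will be making the limit $\tau\to T^-$ rigorous simultaneously with the measurable selection of the minimizing terminal impulse. One must justify interchanging limit and expectation in the jump sums (using the boundedness of $f_1,f_2$ and $W_x$ together with the bounded-variation control), verify that the minimizer $(y_1^{*},y_2^{*})$ in (\ref{G-tilde}) depends measurably on $X_{T-}$ so that $\mathcal{S}^{*}$ is genuinely admissible, and confirm that appending this single terminal impulse to the reflected diffusion preserves the admissibility requirements. Everything else is a faithful transcription of Sections \ref{tiDynkin} and \ref{tiSingular}.
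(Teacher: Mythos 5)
Your proposal follows essentially the same route as the paper: the first block is obtained by noting that $\tilde{g}$ satisfies Assumption \ref{AB} so that Theorems \ref{Vreg} and \ref{wmain} apply verbatim with $g$ replaced by $\tilde{g}$, the verification inequality repeats the decomposition and nonnegativity estimates of Theorem \ref{veri}, and the only new step is absorbing the terminal difference $G(X_T)-\tilde{G}(X_{T^-})$ together with the terminal impulse cost, which is nonnegative precisely by the minimality in the definition (\ref{G-tilde}) and vanishes for the reflecting policy augmented by the minimizing impulse at $T$. The technical points you flag at the end (measurable selection of the minimizer, interchange of limit and expectation as $\tau\to T^-$) are not addressed in the paper's proof either, so your treatment is, if anything, slightly more careful about where the remaining work lies.
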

\begin{proof}
We only need to verify that $W$ as given is optimal. Following the
proof of Theorem \ref{veri} we arrive at
\begin{eqnarray}\label{kwdiffnew}
&&k_{\mathcal{S}}(s,x)-W(s,x)\nonumber\\
&&=E_{(s,x)}\left(k_{\mathcal{S}}(\tau,X_\tau)-W(\tau,X_\tau)\right)\nonumber\\
&& +E_{(s,x)}\int_s^\tau\left( H(t,X_t)+\frac{\partial W}{\partial
t}(t,X_t)+\mu(X_t)\frac{\partial W}{\partial
x}(t,X_t)+\frac{1}{2}\sigma^2(X_t)\frac{\partial^2 W}{\partial
x^2}(t,X_t)\right)dt \nonumber\\
&&+ E_{(s,x)}\int_s^\tau \left(e^{-ct}f_1(t,X_t)+\frac{\partial W}{\partial x}(t,X_t)\right)dA_t^{(1),c}\nonumber\\
&&+E_{(s,x)}\int_s^\tau\left(e^{-ct}f_2(t,X_t)-\frac{\partial
W}{\partial
x}(t,X_t)\right)dA_t^{(2),c}\nonumber\\
&&+E_{(s,x)}\sum_{s\leqslant t< \tau}\Delta
W(t,X_t)\nonumber\\
&&+ E_{(s,x)}\left(\sum_{s\leqslant
t<\tau}\left(\int_{X_{nt^-}}^{X_{nt^-}+\Delta A_t^{(1)}}
e^{-ct}f_1(t, y)dy+\int_{X_{nt^-}-\Delta A_t^{(2)}}^{X_{nt^-}}
e^{-ct}f_2(t, y)dy\right)\right).
\end{eqnarray} By Theorem \ref{wmain}, the second, third and fourth
expectations in (\ref{kwdiffnew}) are all nonnegative, and this
holds true as $\tau\to T$.

Define the sets
\begin{displaymath}\Gamma_+=\{t\in[s,T]: \Delta
A_t^{(1)}>0\},\quad \Gamma_-=\{t\in[s,T]: \Delta A_t^{(2)}>0\},
\end{displaymath} then once again $\Gamma_+\cap\Gamma_-=\phi$.

If we send $\tau\to T$, $k_{\mathcal{S}}(T^-,X_{T^-})$ can be
written as $G(X_T)$ plus the possible jump of control at $T$. Thus
by sending $\tau\to T$ and the fact that
$k_{\mathcal{S}}(T,X_T)=G(X_T)$,
$W(T^-,X_{T^-})=W(T,X_{T^-})=\tilde{G}(X_{T^-})$ by the continuity
of $W$, we can rewrite the remaining parts in (\ref{kwdiffnew}) as
\begin{eqnarray}\label{rewrKWnew}
&&E_{(s,x)}\left(G(X_T)-\tilde{G}(X_{T^-})\right)+E_{(s,x)}\sum_{s\leqslant
t< T}\Delta W(t,X_t) \nonumber\\
&&+ E_{(s,x)}\left(\sum_{s\leqslant t\leqslant
T}\left(\int_{X_{nt^-}}^{X_{nt^-}+\Delta A_t^{(1)}} e^{-ct}f_1(t,
y)dy+\int_{X_{nt^-}-\Delta A_t^{(2)}}^{X_{nt^-}}
e^{-ct}f_2(t, y)dy\right)\right)\nonumber\\
&&=E_{(s,x)}\left(G(X_T)-\tilde{G}(X_{T^-})\right)\nonumber\\
&&+E_{(s,x)}\left(\int_{X_{T^-}}^{X_{T^-}+\Delta A_T^{(1)}}
e^{-ct}f_1(t, y)dy+\int_{X_{T^-}-\Delta A_T^{(2)}}^{X_{T^-}}
e^{-ct}f_2(t, y)dy\right)\nonumber\\
&&+E_{(s,x)}\left[\sum_{s\leqslant t<
T}\int_{X_{nt^-}}^{X_{nt^-}+\Delta A_t^{(1)}}\left(\frac{\partial
W}{\partial x}(t,y)+ e^{-ct}f_1(t,
y)\right)dy\right]\nonumber\\
&&+E_{(s,x)}\left[\sum_{s\leqslant t <T}\int_{X_{nt^-}-\Delta
A_t^{(2)}}^{X_{nt^-}} \left(-\frac{\partial W}{\partial
x}(t,y)+e^{-ct}f_2(t, y)\right)dy\right]
\end{eqnarray} As similar to the proof of Theorem \ref{veri}, the last two
integrals are nonnegative. By the definition of $\tilde{G}$ in
(\ref{G-tilde}), we can write
\begin{displaymath}
\tilde{G}(X_{T^-})=\min_{\Delta A_T^{(1)},\Delta
A_T^{(2)}}\left(G(X_T)+\int_{X_{T^-}}^{X_{T^-}+\Delta A_T^{(1)}}
e^{-ct}f_1(t, y)dy+\int_{X_{T^-}-\Delta A_T^{(2)}}^{X_{T^-}}
e^{-ct}f_2(t, y)dy\right),
\end{displaymath} and now it can be seen that the quantity
(\ref{rewrKWnew}) is nonnegative, hence
$k_{\mathcal{S}}(s,x)\geqslant W(s,x)$.

The rest of the proof is similar to that of Theorem \ref{veri}.
\end{proof}

\section{Regularity of the Free Boundaries}\label{reg_B}
It should be noticed that the $C^1$ regularity of the two free
boundaries $\tilde{a}(t)$ and $\tilde{b}(t)$ are crucial in showing
the regularity of the value function of the Dynkin game, see, e.g.,
the proofs of Theorem \ref{Vreg} and Lemma \ref{bcond5}. In
\cite{Dai09}, the authors claimed that one free boundary is
continuous, and the other is $C^\infty$ by the same arguments as in
Friedman \cite{Friedman75}. However, in \cite{Friedman75} only the
$C^1$ regularity was proved, under some conditions. Karatzas studied
a particular one dimensional singular stochastic control problem in
\cite{Karatzas83}, also through a game approach, and he claimed that
the free boundary is continuously differentiable on $[\epsilon,T]$,
for any $0<\epsilon<T$. However, for a zero-sum game of a general
diffusion process with general obstacles and general terminal
payoff, the regularity of the value function and the regularity of
the free boundaries is still not fully understood. The closely
related problem is the one-sided optimal stopping problem such as
the American option problem, as discussed in \cite{Peskir05}, where
the author characterized the free boundary as the unique solution to
a free-boundary integral equation. Further result can be found in
\cite{Chen07}. Later Yang, Jiang and Bian \cite{Yang09} proved that
the free boundary is continuously differentiable under a particular
condition, which was further relaxed by Bayraktar and Xing
\cite{Bayraktar09}. But in that work, ``...it is essential to have
the value function $V(S,t)$ as the classical solution of the free
boundary problem''. The point is,
 it is often hard to tell which result comes first. If one is
familiar with closed-form solutions of PDE with free boundaries,
once the solution to the PDE is found, the free boundaries are
obtained simultaneously, and vice versa. A very recent result on the
regularity of the free boundary for the American put option can be
found in \cite{Chen12} where the author proved the $C^\infty$
regularity of the free boundary for American put options with
dividend payment. This work may shed some light on the regularity of
the free boundaries of two-obstacle problem with general terminal
payoff function, and the author shall leave this problem as an
interesting future research topic.

\section*{Concluding Remarks} In this paper, we studied a type of time
inhomogeneous stochastic singular control problems of one
dimensional diffusion. We first investigated the optimal policy and
the regularity of the value function $V$ of a time inhomogeneous
zero-sum game (Dynkin game), then the integrated form of this value
function, which is $C^{1,2}$, coincides with the optimal value
function $W$ of the singular control problem. Thus the existence of
a classical solution to the HJB equation is proved. We also
characterized the optimal control policy as to reflect the diffusion
between two time inhomogeneous curves, which are the free boundaries
of the HJB equation. It can be concluded that the $C^1$ property of
the free boundaries is critical for obtaining the smoothness of the
solution of the HJB equation.

It is well known that the time inhomogeneous stochastic singular
control problem and zero-sum game have numerous variations, and very
often the difficulty arises in finding the optimal control policies
and the regularity of the value functions.  This paper aims to set a
basis for further searches of the form of optimal control policies,
as well as the existence and regularity of the solutions of the HJB
equations that are associated with more general time inhomogeneous
singular control problems.


\begin{thebibliography}{99}
\bibitem{Baldursson97} F.M. Baldursson and I. Karatzas, \emph{Irreversible Investment and Industry
Equilibrium}, Finance and Stochastics, {\bf 1} (1997) pp. ~69--89.

\bibitem{Bayraktar09} E. Bayraktar and H. Xing, \emph{Analysis of the optimal exercise boundary of American options for jump
diffusions}, SIAM J. Math. Anal., {\bf 41}(2) (2009) pp.~825--860.

\bibitem{Bensoussan78} A. Bensoussan and J.L. Lions, \emph{Applications des In\'{e}quations Variationnelles en contr\^{o}le
Stochastique}, Dunod, Paris, 1978.

\bibitem{Bismut81} J.M. Bismut, \emph{Convex inequalities in stochastic
control}, J. Funct. Anal., {\bf 42} pp.~226--270, 1981.

\bibitem{Boetius98} F. Boetius and M. Kohlmann, \emph{Connections between optimal stopping and singular stochastic
control}, Stochastic Processes and their Applications {\bf 77}
(1998) pp.~ 253--281.

\bibitem{Burdzy09} K. Burdzy, W. Kang and K. Ramanan, \emph{The Skorokhod problem in a time-dependent
interval}, Stochastic Processes and their Applications, {\bf 119}
pp.~428--452, 2009.

\bibitem{Chen07} X. Chen and J. Chadam, \emph{A mathematical analysis of the optimal exercise boundary for American put
options}, SIAM J. Math. Anal., {\bf 38}(5) (2007) pp.~1613--1641.

\bibitem{Chen12} X. Chen and H. Cheng, \emph{Regularity of the free boundary for the American put
option}, Discrete and Continuous Dynamical Systems, Series B, {\bf
17}(6) (2012) pp.~1751--1759.

\bibitem{Crandall92} M.G. Crandall, H. Ishii and P. Lions, User's guide to viscosity solutions of second order partial differential equations, American Mathematical Society. Bulletin. New Series 27
(1) pp.~ 1--67, 1992.

\bibitem{Dai09} M. Dai and F. Yi, \emph{Finite horizon optimal investment with transaction costs: a parabolic double obstacle
problem}, J. Differential Equations, {\bf 246}(2009) pp.~1445--1469.

\bibitem{Evans10}L.C. Evans, \emph{Partial Differential Equations: Second
Edition}, Graduate Studies in Mathematics, AMS 2010.

\bibitem{Fleming06}W.H. Fleming and H.M Soner, \emph{Controlled Markov Processes and Viscosity
Solutions}, Springer, 2nd edition, 2006.

\bibitem{Friedman75} A. Friedman, \emph{Parabolic Variational Inequalities in One Space Dimension and Smoothness of the Free
Boundary}, J. Func. Anal., {\bf 18} pp.~151--176, 1975.

\bibitem{Fuku02} M. Fukushima and M. Taksar, \emph{Dynkin Games Via Dirichlet Forms and Singular Control of One-Dimensional
Diffusion}, SIAM J. Control Optim., {\bf 41}(3)(2002)  pp.~682--699.

\bibitem{Fuku06} M. Fukushima and K. Menda, \emph{Refined Solutions of Optimal Stopping Games for Symmetric Markov
Processes}, Technology Reports of Kansai University, {\bf 48}(2006)
pp.~101--110.

\bibitem{Fuku11} M. Fukushima, Y. Oshima and M. Takeda, \emph{Dirichlet Forms and Symmetric Markov
Processes, 2nd Edn.}, Walter de Gruyter, Berlin, New York, 2011.

\bibitem{Guo09} X. Guo and P. Tomecek, \emph{A Class of Singular Control Problems and the Smooth Fit Principle}, SIAM J. Control Optim., {\bf 47}(6)(2009)
pp.~3076--3099.

\bibitem{Karatzas83} I. Karatzas, \emph{A Class of Singular Stochastic Control
Problems}, Adv. Appl. Prob., {\bf 15}(1983) pp.~225--254.

\bibitem{Karatzas85} I. Karatzas and S.E. Shreve, \emph{Connections between optimal stopping and singular stochastic control II. Reflected follower
problems}, SIAM J. Control Optim., {\bf 23}(3)(1985) pp.~433--451.

\bibitem{Karoui82} N. El Karoui, J.P. Lepeltier, and B. Marchal,
\emph{Optimal stopping of controlled Markov processes}, in Advances
in Filtering and Optimal Stochastic Control, Lecture Notes in
Control and Inform. Sci. {\bf 42}, Springer, Berlin, pp.~106--112,
1982.

\bibitem{ma92} J. Ma, \emph{On the Principle of Smooth Fit for a Class of Singular Stochastic Control Problems for
Diffusions}, SIAM J. Control Optim., {\bf 30}(4) pp.~975--999, 1992.

\bibitem{maz92} Z. Ma and M. R\"{o}ckner, \emph{Introduction to the theory of (non-symmetric) Dirichlet
forms}, Springer-Verlag, Nov 19, 1992.

\bibitem{Mackevicius73} V. Mackevicius, \emph{Passing to the limit in the optimal stopping problems of Markov
processes}, Liet. Mat. Rink., {\bf 13} pp.~115--128, 1973.

\bibitem{Mertens73} J. F. Mertens, \emph{Strongly supermedian functions
and optimal stopping}, Z. Wahrscheinlichkeitstheorie und Verw.
Gebiete, {\bf 26} pp.~119--139, 1973.

\bibitem{Nagai78} H. Nagai, \emph{On An Optimal Stopping Problem And A Variational
Inequality}, J. Math. Soc. Japan, {\bf 30}(1978) pp.~303--312.

\bibitem{Oshima06} Y. Oshima, \emph{On An Optimal Stopping Problem of Time Inhomogeneous Diffusion Processes}, SIAM J. Control Optim., {\bf 45}(2)  pp. 565--579, 2006.

\bibitem{Palczewski10} J. Palczewski and L. Stettner, \emph{Finite Horizon Optimal Stopping of
Time-discontinuous Functionals with Applications to Impulse Control
with Delay}, SIAM J. Control Optim., {\bf 48}(8) pp.~4874--4909,
2010.

\bibitem{Palczewski11} J. Palczewski and L. Stettner, \emph{Stopping of functionals with discontinuity at the
boundary of an open set}, Stochastic Processes and Their
Applications, {\bf 121} pp.~2361--2392, 2011.

\bibitem{Peskir05} G. Peskir, \emph{On the American Option Problem},
Mathematical Finance, {\bf 15}(1) (2005) pp.~169--181.

\bibitem{Pham07} H. Pham, \emph{On the Smooth-fit Property for One-dimensional Optimal Switching
Problem}, in S'eminaire de Probabilit'es XL, Lecture Notes in Math.
1899, Springer, Berlin, pp.~187--199, 2007.

\bibitem{Pham09}H. Pham, \emph{Continuous-time Stochastic Control and Optimization with Financial
Applications}, Stochastic Modelling and Applied Probability,
Springer 2009.

\bibitem{Soner89} H.M. Soner and S.E.
Shreve, \emph{Regularity of the Value Function for a Two-Dimensional
Singular Stochastic Control Problem}, SIAM J. Control and
Optimization, {\bf 27}(4) pp.~876--907, 1989.

\bibitem{Stettner81} L. Stettner and J. Zabczyk, \emph{Strong envelops of stochastic processes and a penalty
method}, Stochastics, {\bf 4} pp.~267--280, 1981.

\bibitem{Stettner11} L. Stettner, \emph{Penalty Method for Finite Horizon Stopping
Problems}, SIAM J. Control Optim., {\bf 49}(3), pp.~1078--1099,
2011.

\bibitem{Yang09} C. Yang, L. Jiang and B. Bian, \emph{Free boundary and American options in a jump-diffusion
model}, European J. Appl. Math., {\bf 17} (2006) pp.~95--127.

\bibitem{Yang14} Yipeng Yang, \emph{Multi-dimensional Stochastic
Singular Control Via Dynkin Game and Dirichlet Form}, \emph{SIAM J.
Control and Optim.}, to appear, arXiv:1209.2639, 2014.

\bibitem{Ya04} Yipeng Yang, \emph{Refined Solutions of Time Inhomogeneous Optimal
Stopping Problem and Zero-sum Game via Dirichlet Form}, Probability
and Mathematical Statistics, to appear, 2014.

\bibitem{Zabczyk84} J. Zabczyk, \emph{Stopping problems in stochastic
control}, in Proceedings of the International Congress of
Mathematicians, Vol. II, PWN, Warsar, North-Holland, Amsterdam, pp.
~1425--1437, 1984.

\bibitem{Zab84} J. Zabczyk, \emph{Stopping Games for Symmetric Markov
Processes}, Probab. Math. Statist., {\bf 4}(2)(1984) pp.~185--196.

\end{thebibliography}
\end{document}